\providecommand{\U}[1]{\protect\rule{.1in}{.1in}}
\providecommand{\U}[1]{\protect\rule{.1in}{.1in}}
\newcounter{fig}
\theoremstyle{plain}
\newtheorem{Theorem}{Theorem}[section]
\newtheorem{Remarks}[Theorem]{Remarks}
\newtheorem{theorem}[Theorem]{Theorem}
\newtheorem{corollary}[Theorem]{Corollary}
\newtheorem{proposition}[Theorem]{Proposition}
\newtheorem{lemma}[Theorem]{Lemma}
\theoremstyle{definition}
\newtheorem{defn}[equation]{Definition}
\newtheorem{remark}[Theorem]{Remark}
\theoremstyle{remark}
\def\Changed/{\ifvmode\else\vadjust{\vbox to 0pt{\vskip -\baselineskip\hbox to 0pt{\hss\vrule height 0pt depth 1.2\baselineskip\hskip 1em}\vss}}\fi}
\def\Math#1{\def\MathString{#1}\futurelet\MathDelim\MathChoose}
\def\MathChoose{\ifmmode\let\MathDo\MathString              \else\let\MathDo\MathSkip\fi              \MathDo}
\def\MathSkip{\ifx\MathDelim/\def\MathDo{$\MathString$\EatOne}              \else\def\MathDo{$\MathString$}\fi              \MathDo}
\def\Text#1{\def\TextString{#1}\futurelet\TextDelim\TextSkip}
\def\TextSkip{\ifx\TextDelim/\def\TextDo{\TextString\EatOne}              \else\let\TextDo\TextString\fi              \TextDo}
\def\EatOne#1{}
\def\SkipToEndScan#1\EndScan{}
\def\Scan#1#2#3{\ifx#1#2#3\expandafter\SkipToEndScan\fi\Scan#1}
\def\Upper#1{\Scan#1aAbBcCdDeEfFgGhHiIjJkKlLmMnNoOpPqQrRsStTuUvVwWxXyYzZ#1#1\EndScan}
\def\Phrase#1 #2/#3/#4=#5 #6/#7/#8.{\expandafter\edef\csname#2#3\endcsname{\noexpand\Text{#6#7}}
\expandafter\edef\csname\Upper#2#3\endcsname{\noexpand\Text{\Upper#6#7}}
\expandafter\edef\csname#1#2#3\endcsname{\noexpand\Text{#5 #6#7}}
\expandafter\edef\csname\Upper#1#2#3\endcsname{\noexpand\Text{\Upper#5 #6#7}}
\expandafter\edef\csname#2#4\endcsname{\noexpand\Text{#6#8}}
\expandafter\edef\csname\Upper#2#4\endcsname{\noexpand\Text{\Upper#6#8}}
}
\begin{document}
\title{Clifford modules and invariants of quadratic forms}
\author[ ]{M.~Karoubi}
\address{Universit\'{e} Denis Diderot- Paris 7, UFR de Math\'{e}matiques. Case 7012,
175, rue du Chevaleret. 75205 Paris cedex 13}
\email{max.karoubi@gmail.com}
\thanks{}
\date{8 May 2010}
\maketitle









\pagestyle{myheadings}
\setcounter{section}{-1}%

\section{Introduction}

For any integer $k>0,$ the Bott class $\rho^{k}$ in topological complex
$K$-theory is well known \cite{Bott}, \cite[pg. 259]{Karoubi livre}. If $V$ is
a complex vector bundle on a compact space $X$, $\rho^{k}(V)$ is defined as
the image of $1$ by the composition%
\[
K(X)\overset{\varphi}{\longrightarrow}K(V)\overset{\psi^{k}}{\longrightarrow
}K(V)\overset{\varphi^{-1}}{\longrightarrow}K(X),
\]
where $\varphi$ is Thom's isomorphism in complex $K$-theory and $\psi^{k}$ is
the Adams operation. This characteristic class is natural and satisfies the
following properties which insure its uniqueness (by the splitting principle):

\qquad1) $\rho^{k}(V\oplus W)$ = $\rho^{k}(V).\rho^{k}(W)$

\qquad2) $\rho^{k}(L)$ = $1\oplus L\oplus...\oplus L^{k-1}$ if $L$ is a line bundle.

The Bott class may be extended to the full $K$-theory group if we invert the
number $k$ in the group $K(X)$. It induces a morphism from $K(X)$ to the
multiplicative group $K(X)\left[  1/k\right]  ^{\times}$. The Bott class is
sometimes called "cannibalistic", since both its origin and destination are
$K$-groups.

As pointed out by Serre \cite{Serre}, the definition of the Bott class and its
"square root", introduced in Lemma \ref{Serre}, may be generalized to
$\lambda$-rings, for instance in the theory of group representations or in
equivariant topological $K$-theory.

The purpose of this paper is to give a hermitian analog of the Bott class. We
shall define it on hermitian $K$-theory, with target algebraic $K$-theory. For
instance, let $X=$ \textrm{Spec}$(R),$ where $R$ is a commutative ring with
$k!$ invertible and let $V$ be an algebraic vector bundle on $X$ provided with
a nondegenerate quadratic form\footnote{and also a spinorial structure: see
below.}. We shall associate to $V$ a "hermitian Bott class", designated by
$\rho_{k}(V),$ which takes its values in the same type of multiplicative group
$K(X)\left[  1/k\right]  ^{\times},$ where $K(X)$ is algebraic $K$-theory.

We write $\rho_{k}$ instead of $\rho^{k}$ in order to distinguish the new
class from the old one, although they are closely related (cf. Theorem
\ref{Classical Bott class}).We also note that the "cannibalistic" character of
the new class $\rho_{k}$ is avoided since the source and the target are
different groups. We refer to \cite{Karoubi Annals2} for some basic notions in
hermitian $K$-theory, except that we follow more standard notations, writing
this theory $KQ(X),$ instead of $L(X)$ as in \cite{Karoubi Annals2}.

In order to define the new class $\rho_{k}(V)$, we need a slight enrichment of
hermitian $K$-theory, using "spinorial modules" and not only quadratic ones.
More precisely, a spinorial module is given by a couple $(V,E),$ where $V$ is
a quadratic module and $E$ is a finitely generated projective module, such
that the Clifford algebra $C(V)$ is isomorphic to \textrm{End}$(E).$ The
associated Grothendieck group $K$\textrm{Spin}$(X)$ is related to the
hermitian $K$-group $KQ(X)$ by an exact sequence%
\[
0\longrightarrow\mathrm{Pic}(X)\overset{\theta}{\longrightarrow}%
K\mathrm{Spin}(X)\overset{\varphi}{\longrightarrow}KQ(X)\overset{\gamma
}{\longrightarrow}\mathrm{BW}(X),
\]
where \textrm{BW}$(X)$ denotes the Brauer-Wall group of $X.$ As a set,
\textrm{BW}$(X)$ is isomorphic to the sum of three \'{e}tale cohomology groups
\cite{Wall} \cite[Theorem 3.6]{Caenepell}. There is a twisted group rule on
this direct sum, (compare with \cite{Donovan-Karoubi}). In particular, for the
spectrum of fields, the morphism $\gamma$ is induced by the rank, the
discriminant and the Hasse-Witt invariant \cite{Wall}. From this point of
view, the class $\rho_{k}$ we shall define on $K$\textrm{Spin}$(X)$ may be
considered as a secondary invariant.

The hyperbolic functor $K(X)\longrightarrow KQ(X)$ admits a natural
factorization%
\[
H:K(X)\longrightarrow K\mathrm{Spin}(X)\longrightarrow KQ(X).
\]
The class $\rho_{k}$ is more precisely a homomorphism%
\[
\rho_{k}:K\mathrm{Spin}(X)\longrightarrow K(X)\left[  1/k\right]  ^{\times},
\]
such that we have a factorization with the classical Bott class $\rho^{k}:$%
\[%
\begin{tabular}
[c]{ccc}%
$K(X)$ & $\overset{\rho^{k}}{\longrightarrow}$ & $K(X)\left[  1/k\right]
^{\times}$\\
$H\searrow$ &  & $\nearrow\rho_{k}$\\
& $K$\textrm{Spin}$(X)$ &
\end{tabular}
\ \
\]
An important example is when the bundle of Clifford algebras $C(V)$ has a
trivial class in \textrm{BW}$(X)$. In that case, $C(V)$ is the bundle of
endomorphisms of a $\mathbf{Z}/2$-graded vector bundle $E$ (see the Appendix)
and we can interpret $\rho_{k}$ as defined on a suitable subquotient of
$KQ(X),$ thanks to the exact sequence above. If $k$ is odd, using a result of
Serre \cite{Serre}, we can "correct" the class $\rho_{k}$ into another class
$\overline{\rho}_{k}$ which is defined on the "spinorial Witt group"
\[
W\mathrm{Spin}(X)=\mathrm{Coker}\left[  K(X)\longrightarrow K\mathrm{Spin}%
(X)\right]
\]
and which takes its values in the $2$-torsion of the multiplicative group
$K(X)\left[  1/k\right]  ^{\times}/($\textrm{Pic}$(X))^{(k-1)/2}.$

With the same method, for $n>0,$ we define Bott classes in "higher spinorial
$K$-theory"$:$%
\[
\rho_{k}:K\mathrm{Spin}_{n}(X)\longrightarrow K_{n}(X)\left[  1/k\right]
\]
There is a canonical homomorphism%
\[
K\mathrm{Spin}_{n}(X)\longrightarrow KQ_{n}(X)
\]
which is injective if $n\geq2$ and bijective if $n>2.$ For all $n\geq0$, the
following diagram commutes%
\[%
\begin{tabular}
[c]{ccc}%
$K_{n}(X)$ & $\overset{\rho^{k}}{\longrightarrow}$ & $K_{n}(X)\left[
1/k\right]  $\\
$H\searrow$ &  & $\nearrow\rho_{k}$\\
& $K$\textrm{Spin}$_{n}(X)$ &
\end{tabular}
\ \ .\
\]

In Section $4$, we make the link with Topology, showing that $\rho_{k}$ is
essentially Bott's class defined for spinorial bundles (whereas $\rho^{k}$ is
related to complex vector bundles as we have seen before).

Sections 5 and 6 are devoted to characteristic classes for Azumaya algebras,
especially generalizations of Adams operations.

Finally, in Section 7, we show how to avoid spinorial structures by defining
$\rho_{k}$ on the full hermitian $K$-group $KQ(X)$. The target of $\rho_{k}$
is now an algebraic version of "twisted $K$-theory" \cite{Karoubi tordu}. We
recover the previous hermitian Bott class in a presence of a spinorial structure.

\textbf{Terminology}. It will be implicit in this paper that tensor products
of \textbf{Z}$/2$-graded modules or algebras are graded tensor products.

\textbf{Aknowledgments}. As we shall see many times through the paper, our
methods are greatly inspired by the papers of Bott \cite{Bott}, Atiyah
\cite{Atiyah}, Atiyah, Bott and Shapiro \cite{ABS}, and Bass \cite{Bass
Clifford}. We are indebted to Serre for the Lemma \ref{Serre}, concerning the
"square root" of the classical Bott class. If $k$ is odd, we use this Lemma in
order to define the characteristic class $\overline{\rho}_{k}$ mentioned above
for the Witt group. In Section $7,$ a more refined square root is used.
Finally, we are indebted to Deligne, Knus and Tignol for useful remarks about
operations on Azumaya algebras which are defined briefly in Sections 5 and 6.

Here is a summary of the paper by Sections:

1. Clifford algebras and the spinorial group. Orientation of a quadratic module

2. Operations on Clifford modules

3. Bott classes in hermitian $K$-theory

4. Relation with Topology

5. Oriented Azumaya algebras

6. Adams operations revisited

7. Twisted hermitian Bott classes

Appendix. A remark about the Brauer-Wall group.

\section{Clifford algebras and the spinorial group. Orientation of a quadratic
module}

In this Section, we closely follow a paper of Bass \cite{Bass Clifford}. The
essential prerequisites are recalled here for the reader's convenience and in
order to fix the notations.

Let $R$ be a commutative ring and let $V$ be a finitely generated projective
$R$-module provided with a nondegenerate quadratic form $q.$ We denote by
$C(V,q),$ or simply $C(V),$ the associated Clifford algebra which is naturally
\textbf{Z}$/2$-graded. The canonical map from $V$ to $C(V)$ is an injection
and we shall implicitly identify $V$ with its image.

The Clifford group $\Gamma(V)$ is the subgroup of $C(V)^{\times}$, whose
elements $u$ are homogeneous and satisfy the condition%
\[
uVu^{-1}\subset V.
\]
We define a homomorphism from $\Gamma(V)$ to the orthogonal group
\[
\phi:\Gamma(V)\longrightarrow\mathrm{O}(V)
\]
by the formula
\[
\phi(u)(v)=(-1)^{deg(u)}u.v.u^{-1}.
\]

The group we are interested in is the $0$-degree part of $\Gamma(V),$ i.e.%
\[
\Gamma^{0}(V)=\Gamma(V)\cap C^{0}(V).
\]
We then have an exact sequence proved in \cite[pg. 172]{Bass Clifford}$:$%
\[
1\rightarrow R^{\ast}\rightarrow\Gamma^{0}(V)\rightarrow\mathrm{SO}(V).
\]
The group \textrm{SO}$(V)$ in this sequence is defined as the kernel of the
"determinant map"
\[
\det:\mathrm{O}(V)\rightarrow\mathbf{Z}/2(R),
\]
where \textbf{Z}$/2(R)$ is the set of locally constant functions from
\textrm{Spec}$(R)$ to \textbf{Z}$/2$. This set may be identified with the
Boolean ring of idempotents in the ring $R,$ according to \cite[pg. 159]{Bass
Clifford}. The addition of idempotents is defined as follows%
\[
(e,e^{\prime})\longmapsto e+e^{\prime}-ee^{\prime}.
\]
The determinant map is then a group homomorphism. If \textrm{Spec}$(R)$ is
connected and if $2$ is invertible in $R,$ we recover the usual notion of
determinant which takes its values in the multiplicative group $\pm1.$

We define an antiautomorphism of order $2$ (called an involution through this
paper):
\[
a\longmapsto\overline{a}%
\]
of the Clifford algebra by extension of the identity on $V$ (we change here
the notation of Bass who writes this involution $a\longmapsto{}^{t}a$).

If $a\in\Gamma(V)$, its "spinorial norm" $N(a)$ is given by the formula
\[
N(a)=a\overline{a}.
\]
It is easy to see that $N(a)\in R^{\times}\subset C(V)^{\times}.$ The
spinorial group \textrm{Spin}$(V)$ is then the subgroup of $\Gamma^{0}(V)$
whose elements are of spinorial norm $1.$

We have an exact sequence%
\[
1\rightarrow\mu_{2}(R)\rightarrow\mathrm{Spin}(V)\rightarrow\mathrm{SO}%
(V)\rightarrow\mathrm{Disc}(R).
\]
Here $\mu_{2}(R)$ is the group of $2$-roots of the unity in $R.$ It is reduced
to $\pm1$ if $R$ is an integral domain and if $2$ is invertible in $R$. On the
other hand, \textrm{Disc}$(R)$ is an extension%
\[
1\rightarrow R^{\ast}/R^{\ast}{}^{2}\rightarrow\mathrm{Disc}(R)\rightarrow
\mathrm{Pic}_{2}(R)\rightarrow1,
\]
where \textrm{Pic}$_{2}(R)$ is the $2$-torsion of the Picard group \cite[pg.
176]{Bass Clifford}. The homomorphism%
\[
\mathrm{SN}:\mathrm{SO}(V)\rightarrow\mathrm{Disc}(R),
\]
which is the generalization of the spinorial norm if $R$ is a field, is quite
subtle and is also detailed in \cite{Bass Clifford}.

The map \textrm{SN} stabilizes and defines a homomorphism (where
\textrm{SO}$(R)=\underset{m}{\operatorname{col}\text{im}}$\textrm{SO}%
$(H(R^{m}))$%
\[
\chi:\mathrm{SO}(R)\rightarrow\mathrm{Disc}(R).
\]
The following theorem is proved in \cite[pg. 194]{Bass Clifford}.

\begin{theorem}
The determinant map and the spinorial norm define a homomorphism%
\[
\widetilde{\chi}:\mathrm{O}(R)\rightarrow\mathbf{Z}/2(R)\oplus\mathrm{Disc}%
(R)
\]
which is surjective. It induces a split epimorphism
\[
KQ_{1}(R)\rightarrow\mathbf{Z}/2(R)\oplus\mathrm{Disc}(R).
\]

\end{theorem}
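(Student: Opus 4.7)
I would define $\widetilde\chi$ on $\mathrm{O}(R)$ by
\[
\widetilde\chi(u) \;:=\; \bigl(\det(u),\, \chi(u \cdot s(\det(u))^{-1})\bigr),
\]
where $s\colon \mathbf{Z}/2(R) \to \mathrm{O}(R)$ is a homomorphism section of $\det$, so that the second argument always lies in $\mathrm{SO}(R)$ where $\chi$ is defined. To construct $s$, for each idempotent $e \in R$ I would use the stable reflection $\tau_e$ that performs a standard hyperplane reflection on a hyperbolic summand after localizing at $e$ and the identity after localizing at $1-e$; the identity $\tau_e \tau_{e'} = \tau_{e + e' - e e'}$, which matches the twisted addition of idempotents, holds after stabilization, so $e \mapsto \tau_e$ is a group homomorphism. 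Granted this section, $\widetilde\chi$ is a homomorphism because $\chi$ is one on $\mathrm{SO}(R)$ and the two coordinates are separately tracked.

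\textbf{Surjectivity and factoring through $KQ_1(R)$.} Surjectivity is immediate: $\tau_e$ realizes the idempotent $e$ (with trivial spinorial contribution by construction), while the surjectivity of $\chi$ onto $\mathrm{Disc}(R)$ recalled above supplies $\sigma \in \mathrm{SO}(R)$ with $\chi(\sigma) = d$, so $\widetilde\chi(\sigma \tau_e) = (e, d)$. To descend $\widetilde\chi$ to $KQ_1(R)$, I would check that it kills the elementary orthogonal subgroup, whose generators are the hyperbolic transvections $E_{ij}(x)$. Each such transvection has a natural lift to $\Gamma^0(V)$ of the form $\exp(\tfrac{1}{2} x f_i f_j)$ (with $i \neq j$, $f_i$ and $f_j$ isotropic and mutually orthogonal), well-defined because $(f_i f_j)^2 = 0$; this element has spinorial norm $1$ (since $\overline{f_i f_j} = -f_i f_j$) and image determinant $1$. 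Hence both coordinates of $\widetilde\chi$ vanish on these generators.

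\textbf{Splitting and main obstacle.} For the split epimorphism claim, $e \mapsto [\tau_e]$ already furnishes a homomorphism section of the $\mathbf{Z}/2(R)$-factor in $KQ_1(R)$. For the $\mathrm{Disc}(R)$-factor, I would lift elements via the Clifford-group exact sequence $1 \to R^\ast \to \Gamma^0(V) \to \mathrm{SO}(V)$: for $d \in \mathrm{Disc}(R)$, pick a Clifford-group preimage of an isometry realizing $d$ and send it to its class in $KQ_1(R)$. The principal obstacle is that $\mathrm{Disc}(R)$ itself is a non-split extension of $\mathrm{Pic}_2(R)$ by $R^\ast/R^{\ast 2}$, so the section must respect the twisted group law that encodes this extension. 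This compatibility is not formal; it reduces, after localization and \'{e}tale descent, to a careful Clifford-algebra computation showing that the lifts of $\mathrm{Pic}_2$-representatives multiply with the lifts of unit representatives according to the extension cocycle — precisely the content of the delicate calculations carried out in Bass' paper.
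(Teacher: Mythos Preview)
The paper does not give its own proof of this theorem: immediately before the statement it writes ``The following theorem is proved in \cite[pg.~194]{Bass Clifford}'' and then passes directly to the corollary. So there is no in-paper argument to compare your proposal against; what you have written is a plausible reconstruction of the \emph{shape} of Bass' argument rather than something that can be matched line-by-line to Karoubi's text.

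As a sketch, your plan is reasonable and you are candid about where the real work lies. Two points are worth flagging. First, you invoke ``the surjectivity of $\chi$ onto $\mathrm{Disc}(R)$ recalled above'', but nothing in the paper (or in your proposal) establishes this; in Bass it is obtained by exhibiting explicit isometries realizing prescribed discriminant data, and that construction is itself part of what makes the theorem nontrivial. Second, for the factorization through $KQ_1(R)$ you check only the hyperbolic transvections $E_{ij}(x)$; you should also verify that $\widetilde\chi$ is trivial on the full elementary orthogonal subgroup (which in the hyperbolic setting includes the Eichler--Siegel transformations mixing a vector with its dual), or else argue that your generators suffice. Your final paragraph correctly isolates the genuine obstacle --- the section over $\mathrm{Disc}(R)$ must respect the nontrivial extension by $R^{\ast}/R^{\ast 2}$ --- and you are right that this is exactly what Bass' Clifford-algebra computations handle; without reproducing them your proposal remains a plan rather than a proof.
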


The following corollary is immediate.

\begin{corollary}
We have a central extension%
\[
1\rightarrow\mu_{2}(R)\rightarrow\mathrm{Spin}(R)\rightarrow\mathrm{SO}%
^{0}(R)\rightarrow1,
\]
where \textrm{SO}$^{0}(R)$ is the kernel of the epimorphism $\widetilde{\chi}$
defined above.
\end{corollary}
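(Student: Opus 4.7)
The plan is to deduce the corollary by stabilizing the exact sequence
$$1\rightarrow\mu_{2}(R)\rightarrow\mathrm{Spin}(V)\rightarrow\mathrm{SO}(V)\rightarrow\mathrm{Disc}(R)$$
that is available for each nondegenerate quadratic module $V$, and then matching the image inside the stable orthogonal group with the explicit description of $\mathrm{SO}^{0}(R)$ coming from the previous theorem.

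First I would apply the construction to the hyperbolic modules $V = H(R^{m})$ and pass to the filtered colimit over $m$. Since the terms $\mu_{2}(R)$ and $\mathrm{Disc}(R)$ are independent of $m$ and filtered colimits are exact, I obtain
$$1\rightarrow\mu_{2}(R)\rightarrow\mathrm{Spin}(R)\rightarrow\mathrm{SO}(R)\overset{\chi}{\rightarrow}\mathrm{Disc}(R),$$
with $\chi$ the stabilized spinorial norm appearing before the theorem. In particular the image of $\mathrm{Spin}(R)$ in $\mathrm{SO}(R)$ is precisely $\ker\chi$.

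Next I would identify $\ker\chi$ with $\mathrm{SO}^{0}(R)$. By construction $\mathrm{SO}^{0}(R)=\ker\widetilde{\chi}\subset\mathrm{O}(R)$, where $\widetilde{\chi}$ has two components: the determinant $\mathrm{O}(R)\to\mathbf{Z}/2(R)$ and the stabilized spinorial norm. An element of $\ker\widetilde{\chi}$ has trivial determinant, hence lies in $\mathrm{SO}(R)$, and then lies in $\ker\chi$; conversely any element of $\ker\chi\subset\mathrm{SO}(R)$ already has trivial determinant, so it lies in $\ker\widetilde{\chi}=\mathrm{SO}^{0}(R)$. Thus $\mathrm{SO}^{0}(R)=\ker\chi$, which together with the previous step gives the short exact sequence
$$1\rightarrow\mu_{2}(R)\rightarrow\mathrm{Spin}(R)\rightarrow\mathrm{SO}^{0}(R)\rightarrow1.$$

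Finally, centrality is essentially automatic: the elements of $\mu_{2}(R)\subset R^{\times}$ are scalars in each Clifford algebra $C(H(R^{m}))$, hence lie in the center of $C(H(R^{m}))^{\times}$ and a fortiori in the center of $\mathrm{Spin}(H(R^{m}))$; this centrality is preserved by the colimit. The main technical point is really the second step, namely that the stabilization of the Bass sequence remains exact at $\mathrm{SO}(R)$ (so that every element of $\ker\chi$ actually lifts to $\mathrm{Spin}(R)$); this uses that the exactness at $\mathrm{SO}(V)$ in Bass's sequence is uniform in $V$ and therefore survives the filtered colimit, which is the step I would check most carefully.
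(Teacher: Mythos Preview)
Your argument is correct and is exactly the unpacking of what the paper has in mind: the paper does not give a proof at all beyond declaring the corollary ``immediate,'' and your stabilization of Bass's sequence together with the identification $\mathrm{SO}^{0}(R)=\ker\chi$ is precisely the intended content. The only remark is that your careful check of surjectivity onto $\mathrm{SO}^{0}(R)$ (exactness at $\mathrm{SO}(R)$ after the colimit) and of centrality makes explicit what the paper leaves to the reader.
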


Let us now assume that $2$ is invertible in $R$ and that the quadratic form
$q$ is defined by a symmetric bilinear form $f$, i.e.%
\[
q(x)=f(x,x).
\]
The symmetric bilinear form associated to $q$ is then $(x,y)\mapsto2f(x,y).$

Let us also assume that $V$ is an $R$-module of constant rank which is even,
say $n=2m.$ In this case, the $n^{th}$ exterior power $\lambda^{n}(V)$ is an
$R$-module of rank $1$ which may be provided with the quadratic form
associated to $q.$ We say that $V$ is orientable (in the quadratic sense) if
$\lambda^{n}(V)$ is isomorphic to $R$ with the standard quadratic form
$\theta:x\longmapsto x^{2}$ (up to a scaling factor which is a square). We say
that $V$ is oriented if we fix an isometry between $\lambda^{n}(V)$ and
$(R,\theta).$ If $V$ is free with a given basis, this is equivalent to saying
that the symmetric matrix associated to $f$ is of determinant $1.$

\begin{remark}
One may use the orientation on $V$ to define on $C^{0}(V)$ a symmetric
bilinear form%
\[
\Phi^{0}:C^{0}(V)\times C^{0}(V)\rightarrow C^{0}(V)\overset{\sigma
}{\rightarrow}\lambda^{n}(V)\cong R.
\]
The last map $\sigma$ is defined by the canonical filtration of the Clifford
algebra, the associated graded algebra being the exterior algebra. In the same
way, we define an antisymmetric form by taking the composition%
\[
\Phi^{1}:C^{1}(V)\times C^{1}(V)\rightarrow C^{0}(V)\overset{\sigma
}{\rightarrow}\lambda^{n}(V)\cong R.
\]

The following theorem is not really needed for our purposes but is worth recording.
\end{remark}

\begin{theorem}
The previous bilinear forms $\Phi^{0}$ and $\Phi^{1}$ are non degenerate, i.e.
induce isomorphisms between $C(V)$ and its dual as an $R$-module.
\end{theorem}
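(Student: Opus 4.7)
The plan is to exploit the canonical filtration $F_\bullet C(V)$ on the Clifford algebra whose associated graded is the exterior algebra $\Lambda^\bullet V$, together with the classical fact that the wedge product
\[
\Lambda^i V \times \Lambda^{n-i} V \longrightarrow \Lambda^n V
\]
is a perfect pairing. Combined with the orientation isomorphism $\lambda^n(V)\cong R$, this will yield the non-degeneracy of both $\Phi^0$ and $\Phi^1$.

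More precisely, given a nonzero $a \in C^0(V)$, let $k$ be the smallest integer with $a \in F_k C(V)$, so that the image $\bar a \in F_k/F_{k-1} = \Lambda^k V$ is nonzero. Since $2$ is invertible in $R$, the Chevalley antisymmetrization provides an $R$-linear isomorphism $C(V) \cong \Lambda^\bullet V$ compatible with both the filtration and the $\mathbf{Z}/2$-grading, so $k$ is automatically even. By the perfectness of the exterior pairing there exists $\bar b \in \Lambda^{n-k} V$ (with $n-k$ also even) such that $\bar a \wedge \bar b$ is a generator of $\Lambda^n V \cong R$. Lifting $\bar b$ to any $b \in F_{n-k} C^0(V)$, the product $ab$ lies in $F_n C(V) = C(V)$ and its image in $F_n/F_{n-1} = \Lambda^n V$ equals $\bar a \wedge \bar b$, which is non-zero; hence $\Phi^0(a,b) = \sigma(ab) \neq 0$. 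This proves that $\Phi^0$ is non-degenerate, and the argument for $\Phi^1$ is identical with $k$ and $n-k$ both odd, which is possible precisely because $n$ is even.

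Perfectness of the exterior pairing itself is local on $\mathrm{Spec}(R)$, so reduces to the case where $V$ is free with a basis $e_1, \ldots, e_n$: there $e_I \wedge e_J$ equals $\pm e_1 \wedge \cdots \wedge e_n$ when $J = I^c$ and vanishes otherwise, giving a Gram matrix which is anti-diagonal with unit entries. The one technical point requiring care is to verify that $\sigma(ab) = \bar a \wedge \bar b$ even when $a$ has non-trivial lower-filtration contributions beyond its leading symbol: those contributions multiplied by $b \in F_{n-k}$ all land in $F_{n-1}$, and hence are killed by $\sigma$, so the leading-symbol computation is unaffected. This compatibility between the filtered product and the exterior product is the main thing to keep track of; once it is in place, the rest is bookkeeping.
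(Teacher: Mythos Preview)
Your element-based argument in the second paragraph does not establish non-degeneracy over a general commutative ring. Recall that non-degeneracy here means that the induced map $C^{0}(V)\to (C^{0}(V))^{*}$ is an \emph{isomorphism} of $R$-modules. Showing that for each nonzero $a$ there exists $b$ with $\Phi^{0}(a,b)\neq 0$ only gives injectivity of this map, which is strictly weaker over rings that are not fields (multiplication by $3$ on $\mathbf{Z}[1/2]$ is injective but not an isomorphism). In fact the stronger claim you make---that one can find $\bar b$ with $\bar a\wedge\bar b$ a \emph{generator} of $\Lambda^{n}V$---is already false: take $V$ free over $\mathbf{Z}[1/2]$ with basis $e_{1},\dots,e_{n}$ and $\bar a=3e_{1}$; then $\bar a\wedge\bar b$ is always divisible by $3$.

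The fix is close to what you already have. Your final paragraph computes the Gram matrix of the \emph{exterior} pairing after localizing, finding it anti-diagonal with unit entries. The paper's proof applies exactly this computation directly to $\Phi^{0}$ and $\Phi^{1}$: after localizing at a maximal ideal and choosing an orthogonal basis $(e_{1},\dots,e_{n})$ with $q(e_{1})\cdots q(e_{n})=1$, one checks that $\Phi(e_{I},e_{J})=\pm 1$ when $J=I^{c}$ and $0$ otherwise, so the Gram matrix of $\Phi$ itself is invertible at every localization, hence globally. Alternatively you could keep the filtration idea but phrase it correctly: show that the map $C^{0}(V)\to (C^{0}(V))^{*}$ is filtered (for the dual filtration on the target) and that its associated graded is the perfect exterior pairing; a filtered map with invertible associated graded is invertible. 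Either route avoids the pointwise argument, which is the part that fails.
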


\begin{proof}
We can check this Theorem by localizing at any maximal ideal $(m)$ (see for
instance \cite[pg. $49$]{Atiyah-MacDonald}). In this case, there exists an
orthogonal basis $(e_{1},...,e_{n})$ of $V_{(m)}.$ Since $V$ is oriented, we
may choose this basis such that the product $q(e_{1})...q(e_{n})$ is equal to
$1.$ It is also well known that the various products%
\[
e_{I}=e_{i_{1}...}e_{i_{r}}%
\]
form a basis of the free $R_{(m)}$-module $C(V_{(m)})$. Here the multiindex
$I=(i_{1},...,i_{r})$ is chosen such that $i_{1}<i_{2}<...<i_{r}.$ By a direct
computation we have%
\[
\Phi(e_{I},e_{J})=\pm1
\]
if $I\cup J=\left\{  1,...,n\right\}  $ and $0$ otherwise, for $\Phi=\Phi_{0}$
or $\Phi_{1}$. Therefore, these bilinear forms are non degenerate. Moreover,
they are hyperbolic at each localization.
\end{proof}

\begin{remark}
Since $V$ is oriented, the group $\mathrm{SO}(V)$ acts naturally on $C(V)$ and
we get two natural representations of this group in the orthogonal and
symplectic groups associated to the previous bilinear forms $\Phi^{0}$ and
$\Phi^{1}.$
\end{remark}

Let us now consider the submodule $N$ of $C(V)$ whose elements $u$ satisfy the
identity $u.v=-v.u$ for any element $v$ in $V\subset C(V).$ The canonical
surjection $V\longrightarrow\lambda^{n}(V)$ induces a homomorphism
\[
\tau:N\longrightarrow\lambda^{n}(V).
\]

\begin{proposition}
The homomorphism $\tau$ is an isomorphism between $N$ and $\lambda^{n}(V).$
Moreover, $N$ is included in $C^{0}(V).$
\end{proposition}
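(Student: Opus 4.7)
The plan is to reduce everything to a direct local computation. The statement concerns a map between finitely generated $R$-modules, so by localizing at each maximal ideal $(m)$ of $R$ it is enough to establish the two conclusions over $R_{(m)}$. As in the proof of the preceding theorem, after localization $V_{(m)}$ admits an orthogonal basis $e_1,\dots,e_n$ (since $2$ is invertible and $q$ is non-degenerate), and the monomials $e_I=e_{i_1}\cdots e_{i_r}$ with $I=\{i_1<\cdots<i_r\}\subset\{1,\dots,n\}$ form a free basis of $C(V_{(m)})$.

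Next I would describe $N_{(m)}$ explicitly in this basis. An element $u=\sum_I a_I e_I$ belongs to $N_{(m)}$ iff $e_j u+u e_j=0$ for every $j=1,\dots,n$. Using the relations $e_ie_j=-e_je_i$ for $i\neq j$ and $e_j^2=q(e_j)\in R_{(m)}^{\times}$, a short parity/sign calculation yields
\[
e_je_I+e_Ie_j=\begin{cases}\pm 2\,e_{I\cup\{j\}} & \text{if } j\notin I \text{ and } |I| \text{ is even},\\ \pm 2q(e_j)\,e_{I\setminus\{j\}} & \text{if } j\in I \text{ and } |I| \text{ is odd},\\ 0 & \text{otherwise.}\end{cases}
\]
Collecting the coefficient of each basis monomial in $e_j u+u e_j$ and using that $2$ and each $q(e_j)$ are invertible, the vanishing condition forces $a_I=0$ whenever there exists some $j$ triggering either of the first two cases.

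I would then observe that the only $I\subset\{1,\dots,n\}$ that escapes every trigger is $I=\{1,\dots,n\}$: if $|I|$ is odd, any $j\in I$ activates the second case; if $|I|$ is even with $|I|<n$, any $j\notin I$ activates the first. Hence $N_{(m)}=R_{(m)}\cdot(e_1\cdots e_n)$ is free of rank one. Because $n=2m$ is even, the generator lies in $C^0(V_{(m)})$, which proves $N\subset C^0(V)$ locally and therefore globally. Finally, under the top-graded projection $\sigma$ coming from the canonical filtration of $C(V_{(m)})$, the element $e_1\cdots e_n$ is sent to $e_1\wedge\cdots\wedge e_n$, a basis of the rank-one module $\lambda^n(V_{(m)})$, so $\tau_{(m)}$ is an isomorphism; since this holds at every maximal ideal and both $N$ and $\lambda^n(V)$ are finitely generated, $\tau$ is an isomorphism globally.

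The main obstacle is essentially bookkeeping: one must be careful with the sign/parity analysis in the commutation identity to see that exactly one multi-index $I$ survives. Every other ingredient—orthogonal diagonalization, invertibility of the $q(e_j)$, identification of $\lambda^n(V)$ with the top piece of the Clifford filtration—is classical and has already been invoked in the proof of the preceding theorem.
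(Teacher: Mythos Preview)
Your proof is correct and follows the same approach as the paper: localize at each maximal ideal, pick an orthogonal basis, and verify that $e_1\cdots e_n$ alone generates $N_{(m)}$, whence $\tau_{(m)}$ is an isomorphism and $N\subset C^0(V)$. The paper's proof is essentially a one-line version of yours; your explicit parity analysis of $e_je_I+e_Ie_j$ simply unpacks the phrase ``we see that the product $e_1\cdots e_n$ generates $N$''.
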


\begin{proof}
We again localize with respect to all maximal ideals $(m)$ of $R$ and consider
an orthogonal basis $\left\{  e_{i}\right\}  $ of $V_{(m)}$ as above. Then we
see that the product $e_{1}...e_{n}$ generates $N$ and we get the required
isomorphism between $N_{(m)}$ and $\lambda^{n}(V)_{(m)}.$
\end{proof}

\begin{remark}
\label{Definition of u}If we assume that $V$ is oriented and of even rank, the
previous proposition provides us with a canonical element $u$ in $C^{0}(V)$
which anticommutes with all elements $v$ in $V$, such that $u^{2}=1$.
Moreover, $u.\overline{u}=1$ and therefore $u$ belongs to the spinorial group
$\mathrm{Spin}(V).$
\end{remark}

An important example is the case when the Clifford algebra $C(V)$ has a
trivial class in the Brauer-Wall group of $R,$ denoted by \textrm{BW}%
$(R)\footnote{We shall also use the notation $\mathrm{BW}(X)$ if
$X=\mathrm{Spec}(R),$ as we wrote before.}.$ In other words, $C(V)$ is
isomorphic to the algebra $\textrm{End}(E)$ of a graded vector space
$E=E_0\oplus E_1$ where $E_0$ and $E_1$ are not reduced to $0$ (see the
Appendix$).$ The only possible choices for $u$ are then one of the two
following matrices%
\[
\left[
\begin{array}
[c]{cc}%
1 & 0\\
0 & -1
\end{array}
\right]  \text{or}\left[
\begin{array}
[c]{cc}%
-1 & 0\\
0 & 1
\end{array}
\right]
\]

We always choose $E$ such that $u$ is of the first type and, by a topological
analogy, we shall say that $V$ is "spinorial". For instance, let $R$ be the
ring of real continuous functions on a compact space $X$ and let $V$ be a real
vector bundle provided with a positive definite quadratic form. The triviality
of the Clifford bundle $C(V)$ in \textrm{BW}$(X)$ is then equivalent to the
following properties: the rank of $V$ is a multiple of $8$ and the two first
Stiefel-Whitney classes $w_{1}(V)$ and $w_{2}(V)$ are trivial (see
\cite{Donovan-Karoubi}).

\begin{remark}
Strictly speaking, in the topological situation, the classical spinoriality
property does not imply that the rank of $V$ is a multiple of $8.$ We put this
extra condition in order to ensure the trivialization of $C(V)$ in the
Brauer-Wall group of $R$.
\end{remark}

\section{Operations on Clifford modules}

As it is well known, at least for fields, the standard non trivial invariants
of quadratic forms $(V,q)$ are the discriminant and the Hasse-Witt invariant.
They are encoded in the class of the Clifford algebra $C(V)=C(V,q)$ in the
Brauer-Wall group of $R,$ which we call \textrm{BW}$(R)$, as in the previous
Section. For any commutative ring $R$, this group \textrm{BW}$(R)$ has been
computed by Wall and Caenepeel \cite{Wall}\cite{Caenepell}. As a set, it is
the sum of the first three \'{e}tale cohomology groups of $X=$ \textrm{Spec}%
$(R)$ but with a twisted group rule (compare with \cite{Donovan-Karoubi}). We
view this class of $C(V)$ in \textrm{BW}$(R)$ as a "primary" invariant. In
order to define "secondary" invariants, we may proceed as usual by assuming
first that this class is trivial. Therefore, we have an isomorphism%
\[
C(V)\cong\mathrm{End}(E),
\]
where $E$ is a \textbf{Z}$/2$-graded $R$-module which is projective and
finitely generated. We always choose $E$ such that the associated element $u$
defined in the previous section is the matrix%
\[
\left[
\begin{array}
[c]{cc}%
1 & 0\\
0 & -1
\end{array}
\right]  .
\]
However, $E$ is not uniquely defined by these conditions. If
\[
\mathrm{End}(E)\cong\mathrm{End}(E^{\prime}),
\]
we have $E^{\prime}\cong E\otimes L,$ where $L$ is a module of rank $1,$
concentrated in degree $0$ according to our choice of $u$ (this is a simple
consequence of Morita equivalence).

As in the introduction, we may formalize the previous considerations better
thanks to the following definition. A "spinorial module" is a couple $(V,E),$
where $E$ is a finitely generated projective module and $V=(V,q)$ is a
quadratic oriented module, such that $C(V)$ is isomorphic to \textrm{End}$(E)$
with the choice of $u$ above. We define the "sum" $(V,E)+(V^{\prime}%
,E^{\prime})$ as $(V\oplus V^{\prime},E\otimes E^{\prime})$ and the group
$K$\textrm{Spin}$(R)$ by the usual Grothendieck construction.

\begin{proposition}
We have an exact sequence%
\[
0\longrightarrow\mathrm{Pic}(R)\overset{\theta}{\longrightarrow}%
K\mathrm{Spin}(R)\overset{\varphi}{\longrightarrow}KQ(R)\overset{\gamma
}{\longrightarrow}\mathrm{BW}(R),
\]
where the homomorphisms $\gamma,\varphi$ and $\theta$ are defined below.
\end{proposition}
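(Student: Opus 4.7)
The plan is first to pin down the three maps. Define $\theta([L]) = [(0,L)] - [(0,R)]$, where $0$ is the zero quadratic module (whose Clifford algebra is $R$, so any line bundle $L$ placed in degree $0$ is a legitimate spinorial structure since $R \cong \mathrm{End}(L)$). Define $\varphi$ as the forgetful map $[(V,E)] \mapsto [V]$, and $\gamma([V]) = [C(V)] \in \mathrm{BW}(R)$. Well-definedness of $\gamma$ follows from $C(V\oplus V') \cong C(V)\mathbin{\widehat{\otimes}} C(V')$ (graded tensor) and the vanishing of $[\mathrm{End}(E)]$ in $\mathrm{BW}(R)$. Well-definedness of $\varphi$ uses the same additivity for spinorial sums.

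Next, verify injectivity of $\theta$. If $[(0,L)] = [(0,R)]$ in $K\mathrm{Spin}(R)$, the Grothendieck construction produces a spinorial $(V',E')$ with an isomorphism $(V', L\otimes E') \cong (V', E')$ of spinorial modules. This supplies, in particular, a $C(V')$-linear isomorphism $L \otimes E' \cong E'$, and since $C(V') = \mathrm{End}(E')$, Morita equivalence collapses this to $L \cong R$.

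For exactness at $K\mathrm{Spin}(R)$, the inclusion $\mathrm{im}\,\theta \subset \ker\varphi$ is immediate. Given $x = [(V,E)] - [(V',E')]$ with $\varphi(x)=0$, there is $W$ with $V\oplus W \cong V' \oplus W$ as quadratic modules. Stabilizing by a hyperbolic summand $H$ (whose Clifford algebra acts on the exterior algebra of a Lagrangian, giving a canonical spinorial structure and allowing one to correct the sign of the element $u$), one obtains a spinorial structure $F$ on $W$, possibly after replacing $W$ by $W\oplus H$. The two spinorial modules $(V\oplus W, E\otimes F)$ and $(V'\oplus W, E'\otimes F)$ then have isomorphic underlying oriented quadratic modules, so the Morita uniqueness recalled in the paper produces a line bundle $L$ with $E'\otimes F \cong (E\otimes F)\otimes L$, yielding $x = \theta([L])$. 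For exactness at $KQ(R)$, $\gamma\circ\varphi = 0$ because $C(V) \cong \mathrm{End}(E)$ when $(V,E)$ is spinorial. Conversely, if $\gamma([V]-[V']) = 0$, then $C(V\oplus(-V')) \cong C(V)\mathbin{\widehat{\otimes}} C(V')^{\mathrm{op}}$ is trivial in $\mathrm{BW}(R)$, so it equals $\mathrm{End}(E)$ for some graded $E$; choosing $E$ (up to swapping the two graded components) so that the canonical element $u$ of Remark~\ref{Definition of u} is the standard sign matrix makes $(V\oplus (-V'), E)$ a spinorial module lifting $[V]-[V']$.

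The main obstacle is the stabilization in the middle: one cannot directly endow $W$ (or $V\oplus(-V')$) with a spinorial structure, because the existence of such a structure requires both an orientation and the triviality of the Brauer--Wall class of the Clifford algebra. The remedy is the systematic absorption of hyperbolic summands, whose Clifford algebras are canonically $\mathrm{End}$-algebras (via the exterior algebra of a Lagrangian) and whose orientations and signs of $u$ can be arranged freely by choosing the rank. Once this stabilization is in place, the rest of the argument is essentially Morita theory packaged through the Grothendieck construction.
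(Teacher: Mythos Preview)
Your overall strategy is sound, but two points deserve comment when compared with the paper's argument.

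First, your definition of $\theta$ via $(0,L)$ does not fit the paper's conventions. The paper insists that in a spinorial module $(V,E)$ the spinor bundle $E=E_0\oplus E_1$ have both graded pieces nonzero, so that the canonical element $u$ really is the sign matrix; a rank-one $E$ concentrated in degree~$0$ is excluded. The paper instead sets
\[
\theta(L)=\bigl(H(R),\,\Lambda(R)\otimes L\bigr)-\bigl(H(R),\,\Lambda(R)\bigr),
\]
using the hyperbolic module and its exterior-algebra spinors. This is not merely cosmetic: it is what makes the later stabilisation arguments (and the normalisation of $u$) go through uniformly.

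Second, your treatment of exactness at $K\mathrm{Spin}(R)$ and of the injectivity of $\theta$ is more laborious than necessary, and contains a slip. You claim that an isomorphism of spinorial modules $(V',L\otimes E')\cong(V',E')$ yields a \emph{$C(V')$-linear} isomorphism $L\otimes E'\cong E'$; in fact it is only $\alpha$-semilinear for the accompanying isometry $\alpha$, so the Morita step does not apply as stated. The paper sidesteps this entirely: it first observes that any element of $\ker\varphi$ can be rewritten (after hyperbolic stabilisation and transport along an isometry) in the form $(V,E)-(V,E')$ with the \emph{same} $V$, and then constructs an explicit retraction
\[
\sigma':\ker\varphi\longrightarrow\mathrm{Pic}(R),\qquad (V,E)-(V,E')\longmapsto L\ \text{where }E\cong E'\otimes L,
\]
using only that $E$ and $E'$ are two Morita generators for the same Azumaya algebra $C(V)$. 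The single identity $\sigma'\circ\theta=\mathrm{id}$ then gives injectivity, and surjectivity onto $\ker\varphi$ is a one-line manipulation. This is both shorter and avoids the semilinearity issue.

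On the positive side, you spell out the exactness at $KQ(R)$ (via $V\oplus(-V')$ and the Appendix result on triviality in $\mathrm{BW}(R)$), which the paper leaves implicit.
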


\begin{proof}
The map $\gamma$ was defined previously: it associates to the quadratic module
$(V,q)$ the class of the Clifford algebra $C(V)=C(V,q)$ in $BW(R).$ We note
that $\gamma$ is not necessarily surjective, even on the $2$-torsion part: see
\cite[pg. $11$]{Donovan-Karoubi} for counterexamples. The map $\varphi$ sends
a couple $(V,E)$ to the class of the quadratic module $V.$ Finally, $\theta$
associates to a module $L$ of rank one the difference\footnote{Note that we
can replace $R$ by $R^{n}$ in this formula.} $(H(R),\Lambda(R)\otimes
L)-(H(R),\Lambda(R)),$ where $H$ is the hyperbolic functor and $\Lambda$ the
exterior algebra functor, viewed as a module functor. This map $\theta$ is a
homomorphism since the image of $L\otimes L^{\prime}$ may be written as
follows%
\[
(H(R),\Lambda(R)\otimes L\otimes L^{\prime})-(H(R),\Lambda(R)\otimes L)
\]%
\[
+(H(R),\Lambda(R)\otimes L)-(H(R),\Lambda(R)).
\]
This image is also%

\[
(H(R),\Lambda(R)\otimes L^{\prime})-(H(R),\Lambda(R))+(H(R),\Lambda(R)\otimes
L)-(H(R),\Lambda(R))
\]
which is $\theta(L)+\theta(L^{\prime}).$ In order to complete the proof, it
remains to show that the induced map%
\[
\sigma:\mathrm{Pic}(R)\longrightarrow\mathrm{Ker}(\varphi)
\]
is an isomorphism.

1) The map $\sigma$ is surjective. Any element of \textrm{Ker}$(\varphi)$ may
be written $(V,E)-(V,E^{\prime}).$ Therefore, we have $E\cong E^{\prime
}\otimes L,$ where $L$ is of rank $1.$ If we add to this element
$(H(R),\Lambda(R)\otimes L^{-1})-(H(R),\Lambda(R)),$ which belongs to
$\operatorname{Im}(\varphi),$ we find $0.$

2) The map $\sigma$ is injective. We define a map backwards
\[
\sigma^{\prime}:\mathrm{Ker}(\varphi)\longrightarrow\mathrm{Pic}(R),
\]
by sending the difference $(V,E)-(V,E^{\prime})$ to the unique $L$ such that
$E\cong E^{\prime}\otimes L.$ It is clear that $\sigma^{\prime}\cdot
\sigma=Id,$ which proves the injectivity of $\sigma.$
\end{proof}

Before going any further, we need a convenient definition, due to Atiyah, Bott
and Shapiro \cite{ABS}, of the graded Grothendieck group $GrK(A)$ of a
\textbf{Z}$/2$-graded algebra $A.$ It is defined as the cokernel of the
restriction map%
\[
K(A\widehat{\otimes}C^{0,2})\longrightarrow K(A\widehat{\otimes}C^{0,1}),
\]
where $C^{0,r}$ is in general the Clifford algebra of $R^{r}$ with the
standard quadratic form $\sum_{i=1}^{r}(x_{i})^{2}.$ We note that if $A$ is
concentrated in degree $0,$ we recover the usual definition of the
Grothendieck group $K(A),$ under the assumption that $2$ is invertible in $A,$
which we assume from now on. This follows from the fact that a \textbf{Z}%
$/2$-graded structure on a module $M$ is equivalent to an involution on $M.$

Another important example is $A=C(V,q),$ where $V$ is oriented and of even
rank. In order to compute the graded Grothendieck group of $A$, we use the
element $u$ introduced in \ref{Definition of u} to define a natural
isomorphism%
\[
A\widehat{\otimes}C^{0,r}\longrightarrow A\otimes C^{0,r}.
\]
It is induced by the map
\[
(v,t)\mapsto v\otimes1+u\otimes t,
\]
where $v\in V\subset C(V,q)$ and $t\in R^{r}\subset C^{0,r}$. If $E$ is a
\textbf{Z}$/2$-graded $R$-module, the same argument may be applied to $A=$
\textrm{End}$(E).$ The graded Grothendieck group again coincides with the
usual one. Since we consider only these examples in our paper, we simply write
$K(A)$ instead of $GrK(A)$ from now on.

The graded algebras we are interested in are the Clifford algebras
$\Lambda_{k}=$ $C(V,kq),$ where $k>0$ is an invertible integer in $R$. The
interest of this family of algebras is the following. Let $M$ be a
\textbf{Z}$/2$-graded module over $\Lambda_{1}.$ Then its $k^{th}$-power
$M^{\widehat{\otimes}k}$ is a graded module over the crossed product algebra
$S_{k}\ltimes C(V)^{\widehat{\otimes}k})\cong S_{k}\ltimes C(V^{k}),$ where
$S_{k}$ is the symmetric group on $k$ letters. One has to remark that the
action of the symmetric group $S_{k}$ on $M^{\widehat{\otimes}k}$ takes into
account the grading as in \cite[pg. $176$]{Atiyah}: the transposition $(i,j)$
acts on a decomposable homogeneous tensor
\[
m_{1}\otimes...\otimes m_{i}\otimes...\otimes m_{j}\otimes...\otimes m_{k}%
\]
as the permutation of $m_{i}$ and $m_{j},$ up to the sign $(-1)^{\deg
(m_{i})\deg(m_{j})}.$

Let us now consider the diagonal $V\longrightarrow V^{k}.$ It is an isometry
if we provide $V$ with the quadratic form $kq.$ Therefore, we have a well
defined map%
\[
\Lambda_{k}\longrightarrow C(V^{k})
\]
which is equivariant with respect to the action of the symmetric group
$S_{k}.$ It follows that the correspondence%
\[
M\longmapsto M^{\widehat{\otimes}k}%
\]
induces (by restriction of the scalars) a "power map"%
\[
P:K(\Lambda_{1})\longrightarrow K_{S_{k}}(\Lambda_{k}),
\]
where $K_{S_{k}}$ denotes equivariant $K$-theory, the group $S_{k}$ acting
trivially on $\Lambda_{k}$.

Let us give more details about this definition. First, we notice that $V^{k}$
splits as the direct sum of $(V,kq)$ and its orthogonal module $W$. This
implies that $C(V)^{\widehat{\otimes}k}\cong C(V^{k})\cong C(W)\widehat
{\otimes}\Lambda_{k}$ is a $\Lambda_{k}$-module which is finitely generated
and projective. Therefore, the "restriction of scalars" functor from the
category of finitely generated projective modules over $C(V^{k})$ to the
analogous category of modules over $\Lambda_{k}$ is well defined. Secondly, we
have to show that the map $P,$ which is a priori defined in terms of modules,
can be extended to a map between graded Grothendieck groups. This may be shown
by using a trick due to Atiyah which is detailed in \cite[pg. $175$]%
{Atiyah}\footnote{More precisely, Atiyah is considering complexes in his
argument but the same idea may be applied to \textbf{Z}$/2$-graded modules.}.
Finally, we notice that $P$ is a \textbf{set} map, not a group homomorphism.

In order to define $K$-theory operations in this setting, we may proceed in at
least two ways. First, following Grothendieck, we consider a \textbf{Z}%
$/2$-graded module $M$ and its $k^{th}$-exterior power in the graded sense.
The specific map $K_{S_{k}}(\Lambda_{k})\longrightarrow K(\Lambda_{k})$ which
defines the $k^{th}$-exterior power is the following: we take the quotient of
$M^{\widehat{\otimes}k}$ by the relations identifying to $0$ all the elements
of type%
\[
m-\varepsilon(\sigma)m^{\sigma}.
\]
In this formula, $m$ is an element of $M^{\widehat{\otimes}k},m^{\sigma}$ its
image under the action of the element $\sigma$ in the symmetric group, with
signature $\varepsilon(\sigma)$. The composition%
\[
K(\Lambda_{1})\longrightarrow K_{S_{k}}(\Lambda_{k})\longrightarrow
K(\Lambda_{k})
\]
defines the analog of Grothendieck's $\lambda$-operations:%
\[
\lambda^{k}:K(\Lambda_{1})\longrightarrow K(\Lambda_{k}),
\]
as detailed in \cite[pg. $252$]{Karoubi livre} for instance.

\begin{remark}
If $M$ is a graded module concentrated in degree $0$ (resp. $1)$ $\lambda
^{k}(M)$ is the usual exterior power (resp. symmetric power) with an extra
$\Lambda_{k}$-module structure.
\end{remark}

The diagonal map from $V$ into $V\times V$ enables us to define a
"cup-product": it is induced by the tensor product of modules with Clifford
actions:%
\[
K(\Lambda_{k})\times K(\Lambda_{l})\longrightarrow K(\Lambda_{k+l})
\]
The following theorem is a consequence of the classical property of the usual
exterior (graded) powers, extended to this slightly more general situation.

\begin{theorem}
Let $M$ and $N$ be two $\Lambda_{1}$-modules. Then one has natural
isomorphisms of $\Lambda_{r}$-modules%
\[
\lambda^{r}(M\oplus N)\cong\underset{k+l=r}{%
{\displaystyle\sum}
}\lambda^{k}(M)\otimes\lambda^{l}(N).
\]

\end{theorem}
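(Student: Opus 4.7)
The plan is to imitate the classical proof of the decomposition of an exterior power of a direct sum, but performed throughout in the $\mathbb{Z}/2$-graded (super) setting, while tracking the $\Lambda_r$-action coming from the diagonal $V \hookrightarrow V^{r}$.

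First I would expand the graded tensor power distributively:
\[
(M \oplus N)^{\widehat{\otimes}\, r} \;\cong\; \bigoplus_{S \subseteq \{1,\dots,r\}} T_S,
\]
where $T_S$ denotes the graded tensor factor having $M$ in position $i$ when $i \in S$ and $N$ otherwise. Each $T_S$ is naturally a graded module over $C(V^r)$, and by restriction along the algebra map $\Lambda_r \longrightarrow C(V^r)$ induced by the diagonal isometry $(V,rq) \hookrightarrow V^r$, it is a $\Lambda_r$-module. Thus the decomposition above is already an isomorphism of $\Lambda_r$-modules.

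Next I would pass to signed coinvariants under $S_r$. The summands $T_S$ with $|S|=k$ are permuted transitively by $S_r$, and the unique shuffle $\sigma_S$ sending $S$ to $\{1,\dots,k\}$ in an order-preserving manner yields an isomorphism $T_S \cong M^{\widehat{\otimes}\, k} \widehat{\otimes}\, N^{\widehat{\otimes}\, l}$ up to a Koszul sign, where $l=r-k$. After identifying by $\varepsilon(\sigma_S)\sigma_S$ as demanded in the definition of $\lambda^r$, the $\binom{r}{k}$ summands of given size collapse to a single copy of $\lambda^k(M) \widehat{\otimes}\, \lambda^l(N)$; and the residual action of the stabilizer $S_k \times S_l$ on this summand, with its signs, produces exactly the signed coinvariants defining $\lambda^k(M)$ and $\lambda^l(N)$. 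Summing over $k+l=r$ yields the stated decomposition, and compatibility with the $\Lambda_r$-module structure is automatic from the factorization
\[
\Lambda_r \longrightarrow \Lambda_k \,\widehat{\otimes}\, \Lambda_l \longrightarrow C(V^k)\,\widehat{\otimes}\, C(V^l),
\]
which on the level of $V$ is the diagonal $V \to V \oplus V$ that also defines the cup product preceding the theorem statement.

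The main technical obstacle is sign bookkeeping: the Koszul sign produced when shuffling blocks of $M$-tensors past blocks of $N$-tensors must be reconciled with the signature signs used to define $\lambda^{\bullet}$. This is precisely the point where the super and classical worlds differ; however, the signs conspire so that the graded symmetric idempotents factor compatibly, as explained in Atiyah's treatment cited in the paper and already used in the definition of $\lambda^{k}$ above. Once these signs are checked, functoriality and naturality in $(M,N)$ are immediate from the construction.
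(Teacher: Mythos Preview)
Your proposal is correct and follows essentially the same idea as the paper, which is even terser: the paper simply invokes the classical graded algebra isomorphism $\Lambda(M)\otimes\Lambda(N)\to\Lambda(M\oplus N)$ and observes that it is compatible with the Clifford structures, then reads off the degree-$r$ component. Your argument unpacks this isomorphism explicitly at the level of tensor powers and signed $S_r$-coinvariants, which amounts to the same computation carried out by hand rather than packaged as an algebra map.
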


\begin{proof}
It is more convenient to consider the direct sum of all the $\lambda^{k}(M)$,
which we view as the \textbf{Z}-graded exterior algebra $\Lambda(M).$ Since
the natural algebra isomorphism%
\[
\Lambda(M)\otimes\Lambda(N)\longrightarrow\Lambda(M\oplus N)
\]
is compatible with the Clifford structures, the theorem is proved.
\end{proof}

From these $\lambda$-operations, it is classical to associate "Adams
operations" $\Psi^{k}.$ For any element $x$ of $K(\Lambda_{1}),$ we define
$\Psi^{k}(x)\in K(\Lambda_{k})$ by the formula%
\[
\Psi^{k}(x)=Q_{k}(\lambda^{1}(x),...,\lambda^{k}(x)),
\]
where $Q_{k}$ is the Newton polynomial (cf. \cite[pg. 253]{Karoubi livre} for
instance). The following theorem is a formal consequence of the previous one.

\begin{theorem}
Let $x$ and $y$ be two elements of $K(\Lambda_{1}).$Then one has the identity%
\[
\Psi^{k}(x+y)=\Psi^{k}(x)+\Psi^{k}(y)
\]
in the group $K(\Lambda_{k}).$
\end{theorem}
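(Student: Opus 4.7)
The plan is to mimic the classical derivation of additivity of Adams operations in a $\lambda$-ring, keeping careful track of the fact that the target group of each operation $\lambda^k$ now depends on $k$. The natural algebraic structure to exploit is the commutative graded ring
\[
\Lambda^{*} := \bigoplus_{k \geq 0} K(\Lambda_{k}),
\]
with multiplication given by the cup-product $K(\Lambda_{k}) \times K(\Lambda_{l}) \to K(\Lambda_{k+l})$ introduced above (using $\Lambda_{0} = R$, so that $K(\Lambda_{0}) = K(R)$ acts as unit ring). Commutativity and associativity of this cup-product follow from the compatibility of the iterated diagonals $V \hookrightarrow V^{k+l+m}$ with permutations of factors and with the cup-products in two steps, which is exactly the $S_{n}$-equivariance used to construct the operations in the first place.

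First I would assemble the $\lambda$-operations into a generating series
\[
\lambda_{t}(x) \;=\; \sum_{k \geq 0} \lambda^{k}(x)\, t^{k} \;\in\; \Lambda^{*}[[t]].
\]
The preceding theorem says precisely that this series is exponential in $x$, i.e.
\[
\lambda_{t}(x+y) \;=\; \lambda_{t}(x) \cdot \lambda_{t}(y),
\]
where the product is cup-product in $\Lambda^{*}$.

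Next I would form the logarithmic derivative. Classically, Newton's identities tell us that if $\sigma_{1}, \sigma_{2}, \ldots$ are elementary symmetric functions in formal variables $\alpha_{i}$, then
\[
-t\,\frac{d}{dt} \log\bigl(1 - \sigma_{1} t + \sigma_{2} t^{2} - \cdots\bigr)
\;=\; \sum_{k \geq 1} p_{k}\, t^{k},
\]
where $p_{k} = \sum \alpha_{i}^{k} = Q_{k}(\sigma_{1}, \ldots, \sigma_{k})$ is the power sum expressed through the Newton polynomial. Applying this universal identity inside $\Lambda^{*}[[t]]$ to $\sigma_{i} := \lambda^{i}(x)$ gives
\[
-t\,\frac{d}{dt} \log \lambda_{-t}(x) \;=\; \sum_{k \geq 1} \Psi^{k}(x)\, t^{k},
\]
which is meaningful because everything in sight lives in the commutative graded ring $\Lambda^{*}$ and the coefficient of $t^{k}$ automatically lands in $K(\Lambda_{k})$.

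Finally, since the logarithm converts products to sums,
\[
\sum_{k \geq 1} \Psi^{k}(x+y)\, t^{k}
\;=\; -t\,\frac{d}{dt} \log\bigl(\lambda_{-t}(x)\,\lambda_{-t}(y)\bigr)
\;=\; \sum_{k \geq 1} \bigl(\Psi^{k}(x) + \Psi^{k}(y)\bigr)\, t^{k},
\]
and comparing coefficients of $t^{k}$ yields the desired identity in $K(\Lambda_{k})$. The only point requiring genuine verification — and the one I would expect to be the main subtlety rather than an obstacle — is that the cup-product on $\Lambda^{*}$ really is a commutative, associative, graded-unital product, so that the formal manipulation of power series and the universal Newton identities apply verbatim; once that is in place the result is entirely formal.
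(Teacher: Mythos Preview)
Your argument is correct and matches the paper's own proof almost verbatim: both derive additivity of $\Psi^{k}$ from the identity $\lambda_{t}(x+y)=\lambda_{t}(x)\lambda_{t}(y)$ by passing to the logarithmic derivative and invoking Newton's identities. The only cosmetic difference is that you package the target groups into the graded ring $\Lambda^{*}=\bigoplus_{k}K(\Lambda_{k})$ to justify the formal manipulations, whereas the paper appeals in one line to a ``formal splitting principle''; these amount to the same thing.
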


\begin{proof}
Following Adams \cite[pg. $257$]{Karoubi livre}, we note that the series%
\[
\Psi_{-t}(x)=\underset{k=1}{\overset{\infty}{%
{\textstyle\sum}
}}(-1)^{k}t^{k}\Psi^{k}(x)
\]
is the logarithm differential of $\lambda_{t}(x)$ multiplied by $-t,$ i.e.%
\[
-t\frac{\lambda_{t}^{^{\prime}}(x)}{\lambda_{t}(x)}.
\]
This can be checked by a formal "splitting principle" as in \cite{Karoubi
livre} for instance. The additivity of the Adams operation follows from the
fact that the logarithm differential of a product is the sum of the logarithm
differentials of each factor.
\end{proof}

\bigskip\medskip

Another important and less obvious property of the Adams operations is the following.

\begin{theorem}
\label{multiplicativity Adams}Let us assume that $k!$ is invertible in $R$ and
let $x$ and $y$ be two elements of $K(\Lambda_{1}).$ Then one has the
following identity in the group $K(\Lambda_{2k}).$%
\[
\Psi^{k}(x\cdot y)=\Psi^{k}(x)\cdot\Psi^{k}(y).
\]

\end{theorem}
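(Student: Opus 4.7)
The plan is to adapt Atiyah's classical proof of multiplicativity of Adams operations to the graded Clifford setting. The conceptual point is that the power operation $P$ from which $\Psi^{k}$ is constructed is manifestly multiplicative on tensor powers, while the descent from $P$ to $\Psi^{k}$ --- expressed via Newton's polynomials or, equivalently, evaluation of $S_{k}$-characters at a $k$-cycle --- preserves products once $k!$ is invertible. So the argument splits into two independent pieces: multiplicativity of $P$, which is a formal tensor-product calculation, and multiplicativity of the character evaluation, which uses the semisimplicity of $R(S_{k})$ over a ring in which $k!$ is a unit.

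First I would extend the power map of Section~2 to all levels. For each $m\ge 1$, the diagonal $V\hookrightarrow V^{k}$ is an isometry $(V,kmq)\to (V^{k},\oplus_{i=1}^{k}mq)$, and hence induces an $S_{k}$-equivariant graded algebra embedding $\Lambda_{km}\hookrightarrow \Lambda_{m}^{\widehat{\otimes}k}$. Restricting scalars along this embedding turns $M^{\widehat{\otimes}k}$, with its graded $S_{k}$-action, into a $\Lambda_{km}$-module, defining a set map
$$P_{m}:K(\Lambda_{m})\longrightarrow K_{S_{k}}(\Lambda_{km})$$
which extends, by the Atiyah trick already invoked in Section~2, to a map of graded Grothendieck groups. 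In particular $\lambda^{k}$ and $\Psi^{k}$ make sense on $K(\Lambda_{m})$ for all $m$, so that both sides of the claimed identity lie in $K(\Lambda_{2k})$.

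The second step is the compatibility of $P$ with cup products: for $x\in K(\Lambda_{m})$ and $y\in K(\Lambda_{l})$,
$$P_{m+l}(x\cdot y)=P_{m}(x)\cdot P_{l}(y)$$
in $K_{S_{k}}(\Lambda_{k(m+l)})$. This reduces to the graded shuffle isomorphism $(M\otimes N)^{\widehat{\otimes}k}\cong M^{\widehat{\otimes}k}\otimes N^{\widehat{\otimes}k}$, which is $S_{k}$-equivariant for the diagonal action on either side, together with the compatibility of the embeddings $\Lambda_{k(m+l)}\hookrightarrow\Lambda_{m+l}^{\widehat{\otimes}k}\hookrightarrow\Lambda_{m}^{\widehat{\otimes}k}\otimes\Lambda_{l}^{\widehat{\otimes}k}$ with the cup-product structures. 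One then descends from $P$ to $\Psi^{k}$: because $k!$ is invertible, the Newton-polynomial identity $\Psi^{k}=Q_{k}(\lambda^{1},\ldots,\lambda^{k})$ equates $\Psi^{k}(x)$ with the image of $P(x)$ under the evaluation-at-a-$k$-cycle character $R(S_{k})\to R$, which is a ring homomorphism. Combining multiplicativity of $P$ with multiplicativity of this character evaluation yields the identity in $K(\Lambda_{2k})$.

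The main technical obstacle is bookkeeping the graded signs. The $S_{k}$-action on $M^{\widehat{\otimes}k}$ carries the Koszul sign $(-1)^{\deg(m_{i})\deg(m_{j})}$ on each transposition, and the shuffle isomorphism between $(M\otimes N)^{\widehat{\otimes}k}$ and $M^{\widehat{\otimes}k}\otimes N^{\widehat{\otimes}k}$ must be checked to intertwine these signed actions under the diagonal interpretation of $S_{k}$ on the right-hand side. A secondary subtlety, implicit already in the construction of $\lambda^{k}$, is that the description of $\Psi^{k}$ as a character evaluation must be justified through the explicit Newton-polynomial formula rather than by a naive isotypic decomposition: the graded Grothendieck group of a trivially-acted Clifford algebra does not split off $R(S_{k})$ as cleanly as in the ungraded, non-Clifford case. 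Granted this sign bookkeeping, however, the proof is formal.
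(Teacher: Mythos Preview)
Your proposal is correct and follows essentially the same route as the paper: both invoke Atiyah's description of $\Psi^{k}$ as the composite of the power map $P$ with the trace-at-a-$k$-cycle homomorphism $R(S_{k})\to\mathbf{Z}$, and deduce multiplicativity from that of $P$ together with the multiplicativity of this character under the external pairing $R(S_{k})\times R(S_{k})\to R(S_{k}\times S_{k})$. One small divergence: the paper uses the isotypic splitting $K_{S_{k}}(\Lambda_{k})\cong K(\Lambda_{k})\otimes R(S_{k})$ directly---Maschke applies since $k!$ is invertible, and the Koszul signs cause no trouble because the $S_{k}$-action is by degree-zero maps---so your hedge of routing through the Newton polynomial rather than the ``naive'' isotypic decomposition is unnecessary.
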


\begin{proof}
In order to prove this theorem, we use the second description of the
operations $\lambda^{k}$ and $\Psi^{k}$ due to Atiyah \cite[\S \ 2]{Atiyah},
which we transpose in our situation. In order to define operations in
$K$-theory, Atiyah considers the following composition (where $R(S_{k})$
denotes the integral representation group ring of the symmetric group $S_{k}%
$):%
\[
K(\Lambda_{1})\overset{P}{\longrightarrow}K_{S_{k}}(\Lambda_{k})\overset
{\cong}{\longrightarrow}K(\Lambda_{k})\otimes R(S_{k})\overset{\chi
}{\longrightarrow}K(\Lambda_{k}).
\]
In this sequence, $P$ is the $k^{th}$-power map introduced before. The second
map is defined by using our hypothesis that $k!$ is invertible in $R.$ More
precisely, any $S_{k}$-module is semi-simple and is therefore the direct sum
of its isotopy summands: if $\pi$ runs through all the (integral) irreducible
representations of the symmetric group $S_{k},$ the natural map%
\[
\oplus\mathrm{Hom}(\pi,T)\otimes\pi\longrightarrow T
\]
is an isomorphism (note that $\pi$ is of degree $0)$. Therefore, by linearity,
the equivariant $K$-theory $K_{S_{k}}(C(V,kq))=$ $K_{S_{k}}(\Lambda_{k})$ may
be written as $K(\Lambda_{k})\otimes R(S_{k}).$ Finally, the map $\chi$ is
defined once a homomorphism%
\[
\chi_{k}:R(S_{k})\longrightarrow\mathbf{Z}%
\]
is given. For instance, the Grothendieck operation $\lambda^{k}(M)$ is
obtained through the specific homomorphism $\chi_{k}$ equal to $0$ for all the
irreducible representations of $S_{k},$ except the sign representation
$\varepsilon$, where $\chi_{k}(\varepsilon)=$1.

\smallskip

Moreover, we can define the product of two operations associated to $\chi_{k}$
and $\chi_{l}$ using the ring structure on the direct sum $\oplus$%
\textrm{Hom}$(R(S_{r}),\mathbf{Z}),$ as detailed in \cite[pg. 169]{Atiyah}.
This structure is induced by the pairing%
\begin{align*}
\mathrm{Hom}(R(S_{k}),\mathbf{Z})\times\mathrm{Hom}(R(S_{l}),\mathbf{Z})  &
\longrightarrow\mathrm{Hom}(R(S_{k}\times S_{l}),\mathbf{Z})\\
&  \longrightarrow\mathrm{Hom}(R(S_{k+l}),\mathbf{Z}).
\end{align*}
In particular, as proved formally by Atiyah \cite[pg. 179]{Atiyah}, the Adams
operation $\Psi^{k}$ is induced to the homomorphism%
\[
\Psi:R(S_{k})\longrightarrow\mathbf{Z}%
\]
associating to a class of representations $\rho$ the trace of $\rho(c_{k}),$
where $c_{k}$ is the cycle $(1,2,...,k).$ With this interpretation, the
multiplicativity of the Adams operation is obvious.
\end{proof}

\begin{remark}
We conjecture that the previous theorem is true without the hypothesis that
$k!$ is invertible in $R.$ If we assume that $2k$ is invertible in $R,$ and
that $R$ contains the $k^{th}$-roots of the unity, we propose another closely
related operation $\overline{\Psi}^{k}$ in Section $6$. We conjecture that
$\Psi^{k}=\overline{\Psi}^{k}$. This is at least true if $k!$ is invertible in
$R.$
\end{remark}

\section{Bott classes in hermitian $K$-theory}

\bigskip

Let us assume that $k!$ is invertible in $R$. We consider a spinorial module
$(V,E),$ as in the previous Section. The following maps are detailed below:%
\begin{align*}
\theta_{k}  &  :K(R)\overset{\alpha}{\underset{\cong}{\longrightarrow}%
}K(C(V,q))\overset{P}{\rightarrow}K_{S_{k}}(C(V,kq))\underset{\cong%
}{\longrightarrow}K(C(V,kq))\otimes R(S_{k})\\
&  \overset{\Psi^{\prime}}{\longrightarrow}K(C(V,kq))\overset{(\alpha
_{q})^{-1}}{\underset{\cong}{\longrightarrow}}K(R).
\end{align*}
The morphism $\alpha$ is the Morita isomorphism between $K(R)$ and
$K(C(V,q))\cong K($\textrm{End}$(E))$ and $P$ is the $k^{th}$-power map
defined in the previous Section. The morphism $\Psi^{\prime}$ is induced by
$\Psi:R(S_{k})\longrightarrow\mathbf{Z}$ also defined there. Finally, for the
definition of $\alpha_{q},$ we remark that the isomorphism between $C(V,q)$
and \textrm{End}$(E)$ implies the existence of an $R$-module map%
\[
f:V\longrightarrow\mathrm{End}(E_{0}\oplus E_{1})
\]
such that%
\[
f(v)=\left[
\begin{array}
[c]{cc}%
0 & \sigma(v)\\
\tau(v) & 0
\end{array}
\right]
\]
with $\sigma(v)\tau(v)=\tau(v)\sigma(v)=q(v).1.$ We now define a "$k$-twisted
map"
\[
f_{k}:V\longrightarrow\mathrm{End}(E_{0}\oplus E_{1})
\]
by the formula%
\[
f_{k}(v)=\left[
\begin{array}
[c]{cc}%
0 & k\sigma(v)\\
\tau(v) & 0
\end{array}
\right]  .
\]
Since $(f_{k}(v))^{2}=kq(v)$, $f_{k}$ induces a homomorphim between $C(V,kq)$
and \textrm{End}$(E_{0}\oplus E_{1})$ which is clearly an isomorphism, as we
can see by localizing at all maximal ideals. The map $\alpha_{q}$ is then
induced by the same type of Morita isomorphism we used to define $\alpha.$

$.$

\begin{theorem}
Let $(V,E)$ be a spinorial module and let $M$ be an $R$-module. Then the image
of $M$ by the previous composition $\theta_{k}$ is defined by the following
formula%
\[
\theta_{\kappa}(M)=\rho_{k}(V,E).\Psi^{k}(M).
\]
Therefore, $\theta_{k}$ is determined by $\theta_{\kappa}(1)=\rho_{k}(V,E),$
which we shall simply write $\rho_{k}(V,q)$ or $\rho_{k}(V)$ if the quadratic
form $q$ and the module $E$ are implicit. We call $\rho_{k}(V)$ the "hermitian
Bott class" of $V.$ Moreover, we have the multiplicativity formula%
\[
\rho_{k}(V\oplus W)=\rho_{k}(V)\cdot\rho_{k}(W)
\]
in the Grothendieck group $K(R).$
\end{theorem}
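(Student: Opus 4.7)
The proof naturally splits into two parts: first, verifying the formula $\theta_k(M)=\rho_k(V,E)\cdot\Psi^k(M)$, and second, deducing multiplicativity in $V$.

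For the main formula, the plan is to compute $\theta_k(M)$ step by step along the composition defining it, and then to extract a factorization by exploiting the way the $k$th tensor power interacts with the Morita structure. First, the Morita isomorphism $\alpha$ sends $[M]$ to $[M\otimes_R E]$, where the $C(V,q)$-action on $M\otimes_R E$ comes purely from the action on $E$ (since $M$ is an ordinary $R$-module concentrated in degree $0$). Thus after applying the power map $P$ we obtain, as an $S_k$-equivariant $C(V,kq)$-module via the diagonal $V\hookrightarrow V^k$,
\[
(M\otimes_R E)^{\widehat{\otimes}k}\;\cong\;M^{\otimes k}\otimes_R E^{\widehat{\otimes}k},
\]
where $S_k$ acts on $M^{\otimes k}$ by ordinary permutation (no signs, since $M$ is purely even) and on $E^{\widehat{\otimes}k}$ by the graded permutation. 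The crucial point is that the graded tensor structure on the left matches the ordinary tensor of these two $S_k$-modules on the right precisely because $M$ is ungraded.

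The next step is the key algebraic input. Writing out the isotypic decompositions of $M^{\otimes k}$ and $E^{\widehat{\otimes}k}$ over $R(S_k)$ and applying the map $\chi_k=\Psi\colon R(S_k)\to\mathbf{Z}$ (trace at the cycle $c_k=(1,2,\dots,k)$), one uses the fact that the trace character is multiplicative under tensor product of $S_k$-representations. This is the same mechanism exploited in the proof of Theorem~\ref{multiplicativity Adams}. Carrying out this bookkeeping shows
\[
\Psi'\bigl(M^{\otimes k}\otimes_R E^{\widehat{\otimes}k}\bigr)=\Psi^k(M)\otimes_R\Psi^k(E)
\]
inside $K(C(V,kq))$, where $\Psi^k(M)\in K(R)$ is the classical Adams operation (the case $V=0$ of the graded construction) and $\Psi^k(E)\in K(C(V,kq))$ is the graded Adams class of $E$. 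Finally, since $\alpha_q^{-1}$ is $R$-linear with respect to the external $K(R)$-action coming from scalars, it factors through the $R$-module $\Psi^k(M)$, giving
\[
\theta_k(M)=\alpha_q^{-1}\bigl(\Psi^k(M)\otimes\Psi^k(E)\bigr)=\Psi^k(M)\cdot\alpha_q^{-1}(\Psi^k(E)).
\]
Specializing at $M=R$ identifies $\alpha_q^{-1}(\Psi^k(E))=\theta_k(1)=\rho_k(V,E)$, proving the formula.

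For the multiplicativity $\rho_k(V\oplus W)=\rho_k(V)\cdot\rho_k(W)$, the plan is to use the spinorial sum $(V,E)+(W,F)=(V\oplus W,E\otimes F)$ and to check that every ingredient in the definition of $\theta_k$ factors over a direct sum. Specifically, $C(V\oplus W)\cong C(V)\widehat{\otimes}C(W)$ and $\mathrm{End}(E\otimes F)\cong\mathrm{End}(E)\widehat{\otimes}\mathrm{End}(F)$, and the maps $f$ and $f_k$ respect this decomposition; the diagonal $V\oplus W\hookrightarrow(V\oplus W)^k$ is the direct sum of the individual diagonals; the power map is compatible with tensor product of modules; and the character $\Psi\in\mathrm{Hom}(R(S_k),\mathbf{Z})$ is a ring-valued map under the external product $\mathrm{Hom}(R(S_k),\mathbf{Z})\otimes\mathrm{Hom}(R(S_k),\mathbf{Z})\to\mathrm{Hom}(R(S_k\times S_k),\mathbf{Z})\to\mathrm{Hom}(R(S_k),\mathbf{Z})$ via restriction along the diagonal $S_k\hookrightarrow S_k\times S_k$. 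Evaluating at $1\in K(R)$ then yields the desired identity.

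The main obstacle I anticipate is the careful handling of step~2--3: one must be rigorous that the graded tensor $(M\otimes E)^{\widehat{\otimes}k}$ really does split as an $S_k$-equivariant $C(V,kq)$-module into an ungraded tensor $M^{\otimes k}\otimes E^{\widehat{\otimes}k}$, so that the character computation of Atiyah's framework cleanly separates the "classical" Adams operation on $M$ from the graded one on $E$. All sign subtleties live here, and once this isomorphism is nailed down, the rest of the argument is the formal character calculation from the previous section.
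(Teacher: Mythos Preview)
Your proof is correct and follows the same approach as the paper. Both arguments rest on the multiplicativity of the Adams operation (Theorem~\ref{multiplicativity Adams}): for the first formula you unpack $\theta_k(M)$ via the factorization $(M\otimes E)^{\widehat{\otimes}k}\cong M^{\otimes k}\otimes E^{\widehat{\otimes}k}$ and apply the multiplicativity of the trace character, which is exactly what the paper invokes in one line; for the second formula both you and the paper use the isomorphism $C(V\oplus W)\cong C(V)\widehat{\otimes}C(W)$ together with the same multiplicativity. Your write-up is simply a detailed expansion of the paper's terse two-sentence proof.
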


\begin{proof}
The first formula follows from the multiplicativity of the Adams operation
proved in Theorem \ref{multiplicativity Adams}. The second one follows from
the same multiplicativity property and the well-known isomorphism%
\[
C(V\oplus W)\cong C(V)\otimes C(W)
\]
(graded tensor product as always, according to our conventions).
\end{proof}

\begin{theorem}
\label{(V,q) and (V,-q)}Let $(V,-q)$ be the module $V$ provided with the
opposite quadratic form. Then we have the identity%
\[
\rho_{k}(V,q)=\rho_{k}(V,-q).
\]

\end{theorem}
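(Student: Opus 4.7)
The plan is to exhibit, via the canonical unit $u \in C^{0}(V,q)$ of Remark \ref{Definition of u}, a graded algebra isomorphism $\phi \colon C(V,-q) \xrightarrow{\sim} C(V,q)$ that carries through the entire construction of $\theta_{k}$. Since $u^{2}=1$ and $uv=-vu$ for all $v\in V$, the assignment $v \mapsto uv \in C^{1}(V,q)$ satisfies $(uv)^{2}=-q(v)$, so by the universal property it extends to a graded algebra map from $C(V,-q)$, which is an isomorphism with inverse constructed from the analogous element of $C(V,-q)$.

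I would then check, step by step, that every ingredient of the five-term composition defining $\theta_{k}$ transforms coherently under $\phi$. The spinorial structure transfers: if $f(v) = \left[\begin{smallmatrix} 0 & \sigma(v) \\ \tau(v) & 0 \end{smallmatrix}\right]$ realises $C(V,q) \cong \mathrm{End}(E)$, then $f'(v) = u_{E}\,f(v) = \left[\begin{smallmatrix} 0 & \sigma(v) \\ -\tau(v) & 0 \end{smallmatrix}\right]$ realises $C(V,-q) \cong \mathrm{End}(E)$ with the same distinguished choice of $u$, where $u_{E} = \mathrm{diag}(1,-1)$ is the grading involution on $E$. The Morita maps $\alpha$, $\alpha_{q}$ and their $-q$ analogues are identified under $\phi$ and its $kq$-variant $\phi_{k} \colon C(V,-kq) \xrightarrow{\sim} C(V,kq)$; and the Adams character $\Psi \colon R(S_{k}) \to \mathbf{Z}$, being defined purely in terms of the symmetric group, is the same in both cases.

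The delicate step, which I expect to be the main obstacle, is the compatibility of $\phi$ and $\phi_{k}$ with the power map $P$: the map $\phi^{\otimes k}$ does not literally commute with the diagonal embeddings, since $\phi_{k}$ inserts the single element $u_{kq}$ while $\phi^{\otimes k}$ inserts a copy of $u$ in each tensor factor. The route I would take is to show that the two resulting $S_{k}$-equivariant module structures on $E^{\otimes k}$ differ by an inner $\mathbf{Z}/2$-graded automorphism whose character on the conjugacy class of the cycle $(1,2,\dots,k)$ is trivial, so that their images under $\Psi$ coincide.

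As a complementary sanity check, the identity $(V,q)\oplus(V,-q)\cong H(V)$ combined with the multiplicativity of $\rho_{k}$ proved just above gives $\rho_{k}(V,q)\,\rho_{k}(V,-q) = \rho_{k}(H(V)) = \rho^{k}(V)$ via the factorization of the classical Bott class through $H$; the theorem would then identify $\rho_{k}(V,q)$ as a distinguished square root of $\rho^{k}(V)$, consistent with the ``square root'' viewpoint attributed to Serre in the introduction.
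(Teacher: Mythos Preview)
Your approach is essentially the paper's: build the graded isomorphism $C(V,-q)\cong C(V,q)$ from the canonical element $u$, keep the same spinor module $E$, and verify that the whole diagram defining $\theta_{k}$ commutes. The paper records exactly this commutative diagram with $\Psi^{k}$ in the middle and identity maps on the outer $K(R)$'s.

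Where you diverge is in your treatment of the ``delicate step'', the compatibility with the power map. You propose to compare $\phi^{\otimes k}$ with a separately constructed $\phi_{k}$ built from some element $u_{kq}\in C(V,kq)$, and then argue that the discrepancy is an inner automorphism with trivial character on the long cycle. This is more work than necessary. The paper's resolution (stated in the remark immediately following the proof) is simply to take $u_{k}=u\otimes\cdots\otimes u\in C(V^{k})$: this element is of degree $0$, squares to $1$, and anticommutes with every element of $V^{k}$, hence in particular with the image of the diagonal $V\hookrightarrow V^{k}$. Thus the \emph{same} element $u_{k}$ simultaneously implements the isomorphism $C(V^{k},q^{\oplus k})\cong C(V^{k},(-q)^{\oplus k})$ at the level where $M^{\widehat{\otimes}k}$ lives and restricts to give $C(V,kq)\cong C(V,-kq)$. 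With this choice the square
\[
\begin{array}{ccc}
K(C(V,q)) & \overset{P}{\longrightarrow} & K(C(V,kq))\otimes R(S_{k})\\
\downarrow\cong &  & \downarrow\cong\\
K(C(V,-q)) & \overset{P}{\longrightarrow} & K(C(V,-kq))\otimes R(S_{k})
\end{array}
\]
commutes on the nose, with no character computation needed. Your sanity check via $(V,q)\oplus(V,-q)\cong H(V_{0})$ is correct and is in fact used verbatim later in the paper to show that $\overline{\rho}_{k}(V)^{2}=1$.
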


\begin{proof}
According to our hypothesis, the Clifford algebra $C(V)$ is oriented, since it
is isomorphic to $\mathrm{End}(E).$ Therefore, we can use the element $u$
defined in \ref{Definition of u} to show that $C(V,q)$ is isomorphic to
$C(V,-q)$ (more generally, $C(V,q)$ is isomorphic to $C(V,kq)$ if $k$ is
invertible). More explicitly, we keep the same $E$ as the module of spinors,
so that $C(V,q)\cong C(V,-q)\cong\mathrm{End}(E).$ We now write the
commutative diagram%
\[%
\begin{array}
[c]{ccccccc}%
K(R) & \rightarrow & K(C(V,q)) & \overset{\Psi^{k}}{\longrightarrow} &
K(C(V,kq)) & \rightarrow & K(R)\\
\downarrow Id &  & \downarrow\cong &  & \downarrow\cong &  & \downarrow Id\\
K(R) & \rightarrow & K(C(V,-q)) & \overset{\Psi^{k}}{\longrightarrow} &
K(C(V,-kq)) & \rightarrow & K(R)
\end{array}
.
\]

\end{proof}

\begin{remark}
The isomorphism between the Clifford algebras $C(V,q)$ and $C(V,-q)$ is
defined by using the element $u$ of degree $0$ and of square $1$ in $C(V,q)$
which anticommutes with all the elements of $V.$ It is easy to see that the
$k$-tensor product $u_{k}=$ $u\otimes...\otimes u$ satisfies the same
properties for the Clifford algebra $C(V^{k},q\oplus...\oplus q).$ Therefore,
we have an analogous commutative diagram with the power map $P$ instead of the
Adams operation $\Psi^{k}$:%
\[%
\begin{array}
[c]{ccc}%
K(C(V,q)) & \overset{P}{\longrightarrow} & K(C(V,kq))\otimes R(S_{k})\\
\downarrow\cong &  & \downarrow\cong\\
K(C(V,-q)) & \overset{P}{\longrightarrow} & K(C(V,-kq))\otimes R(S_{k})
\end{array}
\]

\end{remark}

\begin{theorem}
\label{Classical Bott class}Let $(V,q)$ be the hyperbolic module $H(P)$ and
$E=\Lambda(P)$ be the associated module of spinors. Then $\rho_{k}(V,E)$ is
the classical Bott class $\rho^{k}(P)$ of the $R$-module $P.$
\end{theorem}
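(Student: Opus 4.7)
The plan is to directly unwind the composition defining $\theta_k(1) = \rho_k(V,E)$ in the hyperbolic case and identify it with the classical Adams-operation expression $\Psi^k(\lambda_{-1}(P))/\lambda_{-1}(P)$ for the Bott class. The key ingredients are the standard identification of the Clifford algebra of a hyperbolic module with an endomorphism algebra, the trace description of $\Psi^k$ used in the proof of Theorem~\ref{multiplicativity Adams}, and the comparison between the twisted Morita isomorphism $\alpha_q$ and the untwisted $\alpha$.

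First I would fix the building blocks. For $V = H(P)$ and $E = \Lambda(P)$ (with the $\mathbf{Z}/2$-grading by parity of exterior degree), the Clifford algebra $C(V,q)$ is canonically isomorphic to $\textrm{End}(E)$ via the classical spinor representation, where $P \subset V$ acts by left exterior multiplication and $P^{*} \subset V$ by contraction; under this isomorphism the canonical element $u$ of Remark~\ref{Definition of u} is the grading operator. In graded $K$-theory, the class of $E$ corresponds, via Morita equivalence and the identification $GrK(\textrm{End}(E)) \cong K(R)$ (valid since $2$ is invertible), to $\lambda_{-1}(P) = [\Lambda^{\mathrm{even}}(P)] - [\Lambda^{\mathrm{odd}}(P)] \in K(R)$. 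Now unwind the composition starting from $1 \in K(R)$: the Morita map $\alpha$ sends $1$ to $[E]$; the power map $P$ sends this to $[E^{\otimes k}]$ as a $C(V^{k})$-module with its $S_{k}$-action; restriction along the isometric diagonal $V \hookrightarrow V^{k}$ yields a $C(V,kq)$-module. By Atiyah's character description of $\Psi^{k}$ (as recalled in Theorem~\ref{multiplicativity Adams}), the map $\Psi'$ evaluates the trace at the $k$-cycle $c_{k} = (1,2,\ldots,k)$, and in the present graded setting this trace computes $\Psi^{k}(\lambda_{-1}(P))$. Finally, $\alpha_{q}^{-1}$, being the Morita equivalence through the $k$-twisted spinor map $f_{k}$ rather than $f$, has the effect on $K(R)$ of dividing by the class $\lambda_{-1}(P)$ (this is where inverting $k$ is used). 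Putting this together,
\[
\rho_{k}(H(P),\Lambda(P)) \;=\; \frac{\Psi^{k}(\lambda_{-1}(P))}{\lambda_{-1}(P)} \;=\; \rho^{k}(P),
\]
the second equality being the classical Adams-operation formula for the Bott class. For a line bundle $L$ this specializes to $(1 - L^{k})/(1 - L) = 1 + L + \cdots + L^{k-1}$, matching the defining property recalled in the introduction; for general $P$ one can confirm by reduction to the line-bundle case using multiplicativity, since the previous theorem combined with $H(P \oplus Q) = H(P) \oplus H(Q)$ and $\Lambda(P \oplus Q) = \Lambda(P) \widehat{\otimes} \Lambda(Q)$ shows that $P \mapsto \rho_{k}(H(P),\Lambda(P))$ is multiplicative, as is $\rho^{k}$.

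The main obstacle is the middle step, i.e.\ verifying that Atiyah's trace identity really produces $\Psi^{k}(\lambda_{-1}(P))$ rather than some other graded variant, and that $\alpha_{q}^{-1}$ indeed divides by $\lambda_{-1}(P)$. The first point requires careful bookkeeping of the graded-tensor sign convention on $E^{\otimes k}$ (transpositions acquire the factor $(-1)^{\deg(m_{i})\deg(m_{j})}$, so that odd-degree summands of $E$ contribute with the alternating signs that convert $[E]$ into $\lambda_{-1}(P)$ under the cycle $c_{k}$). The second point amounts to comparing the two Morita equivalences associated to $f$ and $f_{k}$: both carry $R$ to the same underlying module $E$, and the difference between them, once we pass back to $K(R)$, is precisely the multiplication by the class of $E = \lambda_{-1}(P)$ in graded $K$-theory. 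Once these two bookkeeping statements are in hand, the displayed equality is immediate.
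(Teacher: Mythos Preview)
Your argument is correct and follows essentially the same route as the paper: identify $\rho_{k}(H(P),\Lambda(P))$ with the formal quotient $\Psi^{k}(\lambda_{-1}(P))/\lambda_{-1}(P)$, then check this equals $\rho^{k}(P)$ via the line-bundle case and multiplicativity (the splitting principle). The paper's proof is terser---it simply asserts the identification with $\Psi^{k}(\Lambda P)/\Lambda P$ and passes at once to the rank-one case---whereas you spell out the two Morita steps and the graded trace computation; but the underlying logic is identical.
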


\begin{proof}
According to \cite[pg. 166]{Bass Clifford}, the Clifford algebra $C(V)$ is
isomorphic to \textrm{End}$(\Lambda P)$ as a \textbf{Z}$/2$-graded algebra,
which gives a meaning to our definition. The class $\rho_{k}(V,q)$ may be
identified with the "formal quotient" $\Psi^{k}(\Lambda P)/\Lambda P$ which
satisfies the algebraic splitting principle. Therefore, in order to prove the
theorem, it is enough to consider the case when $P=L$ is of rank one. We have
then $\Lambda L=1-L,$ $\Psi^{k}(\Lambda L)=1-L^{k}$ and therefore, $\Psi
^{k}(\Lambda L)/\Lambda L=1+L+...+L^{k-1}$.
\end{proof}

\medskip

In order to extend the definition of the hermitian Bott class to "spinorial
$K$-theory", we remark that any element $x$ of $K$\textrm{Spin}$(R)$ may be
written as
\[
x=V-H(R^{m}),
\]
where $V$ is a quadratic module (the module of spinors $E$ being implicit).
Moreover, $V-H(R^{m})=V^{\prime}-H(R^{m^{\prime}})$ iff we have an isomorphism%
\[
V\oplus H(R^{m^{\prime}})\oplus H(R^{s})\cong V^{\prime}\oplus H(R^{m})\oplus
H(R^{s})
\]
for some $s.$ Therefore, if we invert $k$ in the Grothendieck group $K(R),$
the following definition%
\[
\rho_{k}(x)=\rho_{k}(V-H(R^{m}))=\rho_{k}(V)/k^{m}%
\]
does not depend of the choice of $V$ and $m.$

The previous definitions are not completely satisfactory if we are interested
in characteristic classes for the "spinorial Witt group" of $R,$ denoted by
$W$\textrm{Spin}$(R)$ and defined as the cokernel of the hyperbolic map%
\[
K(R)\longrightarrow K\mathrm{Spin}(R).
\]
One way to deal with this problem is to consider the underlying module $V_{0}$
of $(V,E).$ According to our hypothesis, $V_{0}$ is a module of even rank,
oriented and isomorphic to its dual. The following lemma is a particular case
of a theorem due to Serre \cite{Serre}. For completeness' sake, we summarize
Serre's formula in this special case.

\begin{lemma}
\label{Serre}Let us assume that $k$ is odd. With the previous hypothesis, the
classical Bott class $\rho^{k}(V_{0})$ is canonically a square in $K(R)$.
\end{lemma}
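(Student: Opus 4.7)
The plan is to follow the standard self-dual splitting principle in $\lambda$-ring theory. Since the classical Bott class is multiplicative under orthogonal direct sum, $\rho^k(V\oplus W) = \rho^k(V)\cdot\rho^k(W)$, and the desired identity is an identity of $\lambda$-ring expressions, it suffices to verify it universally in the ring of Laurent polynomials in formal variables $t_1^{\pm1},\ldots,t_n^{\pm1}$ which are invariant under $S_n$ and under each involution $t_i\leftrightarrow t_i^{-1}$. In this universal setting, the oriented self-dual even-rank module $V_0$ corresponds to the class $\sum_{i=1}^n(t_i + t_i^{-1})$, the orientation condition $\det V_0\cong R$ being automatic from $\prod_i t_i\cdot t_i^{-1}=1$. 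Once the identity is established in this universal $\lambda$-ring, it specializes to the corresponding statement in $K(R)$ via the splitting map associated to $V_0$.

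The key calculation is for a single formal line $L$: writing $k=2m+1$,
\[
\rho^k(L) \;=\; 1 + L + L^2 + \cdots + L^{2m} \;=\; L^m\cdot \sigma_L,\qquad \sigma_L \;:=\; L^{-m} + L^{-m+1} + \cdots + L^{m-1} + L^m.
\]
The expression $\sigma_L$ is by construction invariant under $L\leftrightarrow L^{-1}$, so $\rho^k(L^{-1}) = L^{-m}\sigma_L$, and we obtain the fundamental identity
\[
\rho^k(L)\cdot \rho^k(L^{-1}) \;=\; \sigma_L^{\,2}.
\]
The hypothesis that $k$ is odd enters at exactly this point, since it is needed to factor out the integer power $L^m$. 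Multiplying over the $n$ pairs in the self-dual splitting of $V_0$, the multiplicativity of $\rho^k$ then gives
\[
\rho^k(V_0) \;=\; \prod_{i=1}^n \rho^k(L_i)\cdot\rho^k(L_i^{-1}) \;=\; \Bigl(\prod_{i=1}^n \sigma_{L_i}\Bigr)^{\!2},
\]
so $\rho^k(V_0)$ is a square, with explicit square root $\prod_i\sigma_{L_i}$.

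For canonicity, the Laurent polynomial $\prod_i\sigma_{L_i}$ is manifestly symmetric under the symmetric group $S_n$ permuting the indices and under each separate involution $t_i\leftrightarrow t_i^{-1}$. By the theory of symmetric functions of hyperbolic pairs, any such expression is a polynomial in the elementary symmetric functions of the classes $t_i + t_i^{-1}$, and therefore descends to a well-defined element of $K(R)$ which can be written explicitly via the $\lambda$-operations $\lambda^j(V_0)$. This produces the canonical square root and completes the identification.

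The main obstacle, as I see it, is strictly bookkeeping: one must justify the self-dual splitting principle in the algebraic setting (which is a standard consequence of the universal $\lambda$-ring construction) and verify that the square root, a priori an element of the splitting ring, descends canonically to $K(R)$ independently of all choices. The algebraic heart of the argument -- the factorization $\rho^k(L)=L^m\sigma_L$ -- is elementary once $k$ is odd.
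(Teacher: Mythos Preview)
Your argument is correct but proceeds by a genuinely different factorization than the paper's. The paper (following Serre) works over the cyclotomic integers $\Omega_k$: it writes $\rho^k(V_0)=\prod_{r=1}^{k-1}G_{V_0}(-z^r)$ with $G_{V_0}(t)=\sum_j t^j\lambda^j(V_0)$, uses the palindromic identity $G_{V_0}(t)=t^nG_{V_0}(1/t)$ (which is exactly the relation $\lambda^j=\lambda^{n-j}$ coming from self-duality and orientation) to pair the factor at $z^r$ with the one at $z^{-r}$, and then exhibits the square root as $(-1)^{n(k-1)/4}\prod_{r=1}^{(k-1)/2}G_{V_0}(-z^r)z^{-nr/2}$, checking Galois invariance to land in $K(R)$. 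You instead factor over the formal line-bundle roots and pair $t_i$ with $t_i^{-1}$, exploiting the same self-duality from the orthogonal side rather than the cyclotomic side. Your approach avoids roots of unity entirely and makes the key identity $\rho^k(L)\rho^k(L^{-1})=\sigma_L^2$ transparent, but the descent step---writing $\prod_i\sigma_{t_i}$ as an explicit polynomial in the $\lambda^j(V_0)$ via the isomorphism $\mathbf{Z}[\lambda^1,\dots,\lambda^n]\cong\mathbf{Z}[e_1(s),\dots,e_n(s)]$---is left implicit, whereas Serre's formula is closed and is used verbatim in the calculation of $\sqrt{\rho^k(W\oplus W^*)}$ that immediately follows in the paper. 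One small point worth tightening: your justification that the identity descends to $K(R)$ is really the statement that the kernel of $\mathbf{Z}[\lambda^1,\dots,\lambda^{2n}]\to\mathbf{Z}[t^{\pm}]^{S_n\times(\mathbf{Z}/2)^n}$ is generated by $\lambda^j-\lambda^{2n-j}$ and $\lambda^{2n}-1$, relations that do hold for $V_0$ in $K(R)$; this is what you are invoking under the name ``self-dual splitting principle'', and it deserves to be stated rather than deferred to ``bookkeeping''.
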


\begin{proof}
Let $\Omega_{k}$ be the ring of integers in the $k$-cyclotomic extension of
$\mathbb{Q}$ and let $z$ be a primitive $k^{th}$-root of the unity. In the
computations below, we always embed an abelian group $G$ in $G\otimes
_{\mathbf{Z}}\Omega_{k}$. Let us now write%
\[
G_{V_{0}}(t)=1+t\lambda^{1}(V_{0})+...+t^{n}\lambda^{n}(V_{0}).
\]
From the algebraic splitting principle, it follows that%
\[
\rho^{k}(V_{0})=\overset{k-1}{\underset{r=1}{%
{\displaystyle\prod}
}}G_{V_{0}}(-z^{r}).
\]
The identity $\lambda^{j}(V_{0})=\lambda^{n-j}(V_{0})$ implies that $G_{V_{0}%
}(t)=t^{n}G_{V_{0}}(1/t).$ We then deduce from \cite{Serre} that $\rho
^{k}(V_{0})$ has a square root\footnote{We have inserted a normalization sign
$(-1)^{n(k-1)/4}$ before Serre's formula \cite{Serre} for a reason explained
in the computation below.} which we may choose to be%
\[
\sqrt{\rho^{k}(V_{0})}=(-1)^{n(k-1)/4}\overset{(k-1)/2}{\underset{r=1}{%
{\displaystyle\prod}
}}G_{V_{0}}(-z^{r})\cdot z^{-nr/2}.
\]
This square root is invariant under the action of the Galois group
$($\textbf{Z}$/k)^{\ast}$ of the cyclotomic extension which is generated by
the transformations $z\longmapsto z^{j},$ where $j\in($\textbf{Z}$/k)^{\ast}.$
Therefore, it belongs to $K(R),$ as a subgroup of $K(R)\otimes_{\mathbf{Z}%
}\Omega_{k}.$
\end{proof}

The previous lemma enables us to "correct" the hermitian Bott class in the
following way. We put%
\[
\overline{\rho}_{k}(V)=\rho_{k}(V)(\sqrt{\rho^{k}(V_{0})})^{-1}.
\]
If $V$ is a hyperbolic module $H(W)=W\oplus W^{\ast},$ we have $\rho
_{k}(V)=\rho^{k}(W).$ On the other hand, we have $\lambda_{t}(W\oplus W^{\ast
})=J(t)\cdot t^{n/2}\cdot J(1/t)\cdot\sigma,$ where%
\[
J(t)=\lambda_{t}(W)=1+t\lambda^{1}(W)+...+t^{n/2}\lambda^{n/2}(W)
\]
and $\sigma=\lambda^{n/2}(W^{\ast}).$ Therefore,
\[
\sqrt{\rho^{k}((W\oplus W^{\ast})}=(-1)^{n(k-1)/4}\overset{(k-1)/2}%
{\underset{r=1}{%
{\displaystyle\prod}
}}\sigma\cdot J(-z^{r})\cdot(-z^{r})^{n/2}.J(-1/z^{r})\cdot(z^{-nr/2})
\]%
\[
=\sigma^{(k-1)/2}\cdot%
{\textstyle\prod\limits_{r=1}^{k-1}}
J(-z^{r})=\sigma^{(k-1)/2}\cdot\rho^{k}(W)=\sigma^{(k-1)/2}\cdot\rho
_{k}(H(W)).
\]

\medskip

From this computation, it follows that $\overline{\rho}_{k}(V)$ is a $\left[
k-1)/2\right]  ^{th}$-power of an element of the Picard group of $R$ if $V$ is
hyperbolic. Moreover,
\[
\overline{\rho}_{k}(V)^{2}=(\rho_{k}(V))^{2}(\rho^{k}(V_{0}))^{-1}=\rho
_{k}(V,q))\rho_{k}(V,-q))(\rho^{k}(V_{0}))^{-1}%
\]%
\[
=\rho_{k}(H(V_{0}))(\rho^{k}(V_{0}))^{-1}=\rho^{k}(V_{0})(\rho^{k}%
(V_{0}))^{-1}=1.
\]

\bigskip Summarizing this discussion, we have proved the following theorem:

\begin{theorem}
\label{Corrected Bott class}Let $k>0$ be an odd number$.$ Then the corrected
hermitian Bott class $\overline{\rho}_{k}(V)=\rho_{k}(V)(\sqrt{\rho^{k}%
(V_{0})})^{-1}$ induces a homomorphism also called $\overline{\rho}_{k}:$
\[
\overline{\rho}_{k}:W\mathrm{Spin}(R)\longrightarrow(K(R)\left[  1/k\right]
)^{\times}/\mathrm{Pic}(R)^{(k-1)/2},
\]
where the right hand side is viewed as a multiplicative group. Moreover, the
image of $\overline{\rho}_{k}$ lies in the $2$-torsion of this group.
\end{theorem}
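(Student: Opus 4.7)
The plan is to assemble the three assertions of the theorem---well-definedness of $\overline{\rho}_k$, the homomorphism property modulo $\mathrm{Pic}(R)^{(k-1)/2}$, and $2$-torsion of the image---from the explicit formulas already computed in the paragraphs preceding the statement. Since most of the work is laid out there, the task is really one of organization.

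First I would establish well-definedness and the homomorphism property on all of $K\mathrm{Spin}(R)$. For a spinorial module $(V,E)$, the underlying $V_0$ is oriented of even rank and self-dual, so Lemma \ref{Serre} provides a canonical $\sqrt{\rho^k(V_0)} \in K(R)[1/k]^\times$ and the ratio $\overline{\rho}_k(V) := \rho_k(V)/\sqrt{\rho^k(V_0)}$ is a well-defined element of that multiplicative group; the extension to virtual classes proceeds exactly as for $\rho_k$, namely by inverting $k$ and dividing by the explicit values on $H(R^m)$. For the homomorphism property, the multiplicativity $\rho_k(V \oplus W) = \rho_k(V)\rho_k(W)$ was proved in the theorem defining $\rho_k$, the classical $\rho^k$ is multiplicative, and the square root is multiplicative because Serre's explicit formula $\sqrt{\rho^k(V_0)} = (-1)^{n(k-1)/4} \prod_{r=1}^{(k-1)/2} G_{V_0}(-z^r) \cdot z^{-nr/2}$ splits according to $G_{V_0 \oplus W_0}(t) = G_{V_0}(t)\,G_{W_0}(t)$, with the normalizing sign additive in the even-integer dimension.

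Next I would show that $\overline{\rho}_k$ descends to $W\mathrm{Spin}(R)$ after passing to the quotient by $\mathrm{Pic}(R)^{(k-1)/2}$. The hyperbolic calculation displayed immediately before the theorem yields $\overline{\rho}_k(H(W)) = \sigma^{-(k-1)/2}$, where $\sigma = \lambda^{n/2}(W^\ast) \in \mathrm{Pic}(R)$; since the image of $K(R) \to K\mathrm{Spin}(R)$ is generated by hyperbolic spinorial modules, $\overline{\rho}_k$ sends this image into $\mathrm{Pic}(R)^{(k-1)/2}$ and therefore factors through $W\mathrm{Spin}(R)$.

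Finally, for the $2$-torsion claim I would square $\overline{\rho}_k(V)$: by Theorem \ref{(V,q) and (V,-q)}, $\rho_k(V,q) = \rho_k(V,-q)$, so by multiplicativity $\rho_k(V)^2 = \rho_k(V,q)\,\rho_k(V,-q) = \rho_k(H(V_0))$ (using $(V,q)\oplus(V,-q) \cong H(V_0)$ as quadratic modules), and Theorem \ref{Classical Bott class} identifies this with $\rho^k(V_0)$; dividing gives $\overline{\rho}_k(V)^2 = 1$. The one genuinely delicate point in the whole plan is the multiplicativity of Serre's square root, which turns on checking that the normalizing sign $(-1)^{n(k-1)/4}$ behaves correctly under direct sum; every other step is a direct invocation of results already established.
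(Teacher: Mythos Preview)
Your proposal is correct and follows essentially the same route as the paper: the paper's ``proof'' is precisely the two displayed computations preceding the theorem (the hyperbolic case giving $\sigma^{(k-1)/2}\cdot\rho^k(W)$, and the squaring argument $\overline{\rho}_k(V)^2=\rho_k(H(V_0))(\rho^k(V_0))^{-1}=1$), which you reproduce faithfully. You are slightly more explicit than the paper in isolating the multiplicativity of Serre's square root as a separate verification, whereas the paper leaves this implicit; otherwise the arguments coincide.
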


\begin{remark}
The case $k$ even does not fit with this strategy. However, we shall see in
the next Section that $\rho_{2}$ is not trivial in general on the Witt group.
\end{remark}

\begin{remark}
We have chosen the Adams operation to define the hermitian Bott class. We
could as well consider any operation induced by a homomorphism%
\[
R(S_{k})\longrightarrow\mathbf{Z}.
\]
The only reason for our choice is the very pleasant properties of the Adams
operations with respect to direct sums and tensor products of Clifford modules.
\end{remark}

We would like to extend the previous considerations to higher hermitian
$K$-theory. More precisely, the orthogonal group we are considering to define
this $K$-theory is the group $O_{m,m}(R)$ which is the group of isometries of
$H(R^{m}),$ together with its direct limit $\mathrm{O}(R)=$ \textrm{Colim}%
$\mathrm{O}_{m,m}(R).$ For $n\geq1,$ the higher hermitian $K$-groups are
defined in a way parallel to higher $K$-groups, using Quillen's +
construction, by the formula%
\[
KQ_{n}(R)=\pi_{n}(B\mathrm{O}(R)^{+}).
\]
However, from classical group considerations, as we already have seen, the
spinorial group behaves better than the orthogonal group for our purposes.
Therefore, we shall replace the group $\mathrm{O}_{m,m}(R)$ by the associated
spinorial group \textrm{Spin}$_{\mathrm{m,m}}(R)$ defined in Section 1, which
direct limit is denoted by \textrm{Spin}$(R)$. We have the following two exact
sequences (where the first one splits):
\[
1\longrightarrow\mathrm{SO}^{0}(R)\longrightarrow\mathrm{O}(R)\longrightarrow
Z_{2}(R)\oplus\mathrm{Disc}(R)\longrightarrow1,
\]%
\[
1\longrightarrow\mu_{2}(R)\longrightarrow\mathrm{Spin}(R)\longrightarrow
\mathrm{SO}^{0}(R)\longrightarrow1.
\]
Using classical tools of Quillen's + construction \cite{Hausmann-Husemoller},
one can show that the maps \textrm{SO}$^{0}(R)\longrightarrow\mathrm{O}(R)$
and \textrm{Spin}$(R)\longrightarrow$\textrm{SO}$^{0}(R)$ induce isomorphisms%
\[
\pi_{n}(B\mathrm{SO}^{0}(R)^{+})\cong\pi_{n}(B\mathrm{O}(R)^{+})\text{ for
}n>1.
\]%
\[
\pi_{n}(B\mathrm{Spin}(R)^{+})\cong\pi_{n}(B\mathrm{SO}^{0}(R)^{+})\text{ for
}n>2.
\]
Moreover, the maps
\[
\pi_{1}(B\mathrm{SO}^{0}(R)^{+})\longrightarrow\pi_{1}(B\mathrm{O}(R)^{+})
\]
and
\[
\pi_{2}(B\mathrm{Spin}(R)^{+})\longrightarrow\pi_{2}(B\mathrm{SO}^{0}(R)^{+})
\]
are injective.

Our extension of the Bott class to higher hermitian $K$-groups will be a map
also called $\rho_{k}:$%
\[
\rho_{k}:\pi_{n}(B\mathrm{Spin}(R)^{+})\longrightarrow\pi_{n}(B\mathrm{GL}%
(R)^{+})\left[  1/k\right]  =K_{n}(R)\left[  1/k\right]
\]

In order to define such a map, we work geometrically, using the description of
the various $K$-theories in terms of flat bundles as detailed in the appendix
1 to \cite{Karoubi asterisque}. Any element of $\pi_{n}(B$\textrm{Spin}%
$(R)^{+})=K$\textrm{Spin}$_{n}(R)$ for instance is represented by a formal
difference $x=V-T,$ where $V$ and $T$ are flat spinorial bundles of the same
rank, say $2m,$ over a homology sphere $X=\widetilde{S}^{n}$ of dimension $n.$
We may also assume that the fibers of $V$ and $T$ are hyperbolic e.g.
$H(R^{m})$ and that $T$ is "virtually trivial", which means that $T$ is the
pull-back of a flat bundle over an acyclic space. More precisely, we should
first consider a flat principal bundle $Q$ of structural group \textrm{Spin}%
$_{\mathrm{m,m}}(R)$ such that%
\[
V=Q\times_{\mathrm{Spin}_{\mathrm{m,m}}(R)}H(R^{m}).
\]
On the other hand, \textrm{Spin}$_{\mathrm{m,m}}(R)$ acts on $C(H(R^{m}))=$
\textrm{End}$(\Lambda R^{m})$ by inner automorphisms. Therefore, the bundle of
Clifford algebras $C(V)$ associated to $V$ is the bundle of endomorphisms of
the flat bundle%
\[
Q\times_{\mathrm{Spin}_{\mathrm{m,m}}^{\prime}(R)}\Lambda R^{m}.
\]
We now apply our general recipe of Section 2 on each fiber of $V$ and $T$. In
other words, for the bundle $V$ for instance, we consider the following
composition%
\[
K(X)\overset{\alpha}{\longrightarrow}K^{C(V)}(X)\overset{\Psi^{k}%
}{\longrightarrow}K^{C(V(k))}(X)\overset{\alpha_{k}^{-1}}{\longrightarrow
}K(X).
\]

In this sequence, we write $K(Y)$ for the group of homotopy classes of maps
from $Y$ to the classifying space of algebraic $K$-theory which is
homotopically equivalent to $K_{0}(R)\times BGL(R)^{+}$. Its elements are
reprented by flat bundles over spaces $X$ homologically equivalent to $Y.$ The
notation $K^{C(V)}(X)$ means the (graded) $K$-theory of flat bundles provided
with a graded $C(V)$-module structure.

The image of $1$ by the composition $\alpha_{k}^{-1}.\Psi^{k}.\alpha$ defines
an element of $K(X),$ which we call $\rho_{k}(V).$ On the other hand, since
$T$ is virtually trivial, we have $\rho_{k}(T)=k^{m}$. We then define
$\rho_{k}(x)$ in the group $K_{n}(R)\left[  1/k\right]  $ by the formula
\[
\rho_{k}(x)=\rho_{k}(V)/k^{m}%
\]

\begin{theorem}
For $n\geq1,$ the correspondance $x\mapsto\rho_{k}(x)$ induces a group
homomorphism%
\[
\rho_{k}:K\mathrm{Spin}_{n}(R)\longrightarrow K_{n}(R)\left[  1/k\right]
\]
called the $n$-hermitian Bott class.
\end{theorem}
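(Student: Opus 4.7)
The plan is to verify well-definedness and additivity by working fiberwise over a homology sphere $X=\widetilde{S}^n$ and then extracting a reduced class in $K_n(R)[1/k]$.

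First, for a single flat spinorial bundle $V$ of rank $2m$ over $X$, the construction of Section~3 globalizes cleanly. Because $V$ has structural group $\mathrm{Spin}_{m,m}(R)$ acting on the module of spinors $\Lambda R^m$, the Clifford bundle $C(V)$ is globally of the form $\mathrm{End}(\mathcal{E})$ for the associated flat spinor bundle $\mathcal{E}$, which gives the Morita equivalence $\alpha$. The twisted map $f_k$ is equivariant under the spinorial group, so it defines $\alpha_k$ globally, and $\Psi^k$ is defined on $K^{C(V)}(X)$ by exactly the same recipe as in Section~2. Composing yields $\rho_k(V)=\alpha_k^{-1}\Psi^k\alpha(1)\in K(X)$.

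Second, I would prove independence of the representative $x=V-T$ via the flat-bundle description of higher $K$-theory from the appendix~1 to \cite{Karoubi asterisque}. Two such representatives over homology spheres determine the same class in $\pi_n(B\mathrm{Spin}(R)^+)$ precisely when they are related by a zig-zag of pullbacks along acyclic maps of such spaces. Each step of our construction (Morita equivalence, the power map $P$, projection onto the isotypic summand under $S_k$, and the inverse twisted Morita) is functorial under pullback of flat bundles. Since an acyclic map induces an isomorphism on reduced $K$-theory, the class $\rho_k(V)/k^m\in\widetilde{K}(X)[1/k]\cong K_n(R)[1/k]$ depends only on $x$; virtual triviality of $T$ ensures $\rho_k(T)=k^m$ exactly, so the normalization is unambiguous.

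Finally, for additivity, the sum $x+y$ is represented by the Whitney sum $(V\oplus V')-(T\oplus T')$ with spinor bundle $\mathcal{E}\otimes\mathcal{E}'$ (graded tensor product). Fiberwise application of the multiplicativity formula established earlier in Section~3 gives $\rho_k(V\oplus V')=\rho_k(V)\cdot\rho_k(V')$ in $K(X)$. Writing $\rho_k(V)=k^m+a$ and $\rho_k(V')=k^{m'}+b$ with $a,b$ in the augmentation ideal, the product expands as $k^{m+m'}+k^{m'}a+k^m b+ab$. Since $X$ is a homology sphere of positive dimension, the external product of two augmentation-ideal classes vanishes in $\widetilde{K}(X)[1/k]$ for dimensional reasons (it lives in the $2n$-th stage of the skeletal filtration, which is trivial on $\widetilde{S}^n$), so dividing by $k^{m+m'}$ yields $\rho_k(x+y)=\rho_k(x)+\rho_k(y)$. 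The main obstacle is Step~2: one must check that each ingredient, notably the power map $P$ which is only a set map, is natural enough to descend along the acyclic equivalences that define classes in $\pi_n(B\mathrm{Spin}(R)^+)$. The care taken in Section~2 to extract honest group operations from $P$ via the isotypic decomposition under $S_k$ (available because $k!$ is invertible) is exactly what makes this descent possible.
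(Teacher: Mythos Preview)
Your proposal follows essentially the same route as the paper: well-definedness by functoriality under acyclic maps (the paper condenses this to ``general homotopy considerations''), and additivity via the multiplicativity $\rho_k(V\oplus V')=\rho_k(V)\cdot\rho_k(V')$ together with the vanishing of the product $u\cdot v$ of two reduced classes. The paper packages the last step more algebraically, viewing $1+K_{\ast>0}(R)[1/k]$ as a multiplicative group in which all products of positive-degree elements are declared to be zero, so that $(1+u)(1+v)=1+u+v$ formally; your geometric phrasing is the same idea read on the representing homology sphere.

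One small point: your justification that $ab=0$ ``for dimensional reasons (it lives in the $2n$-th stage of the skeletal filtration)'' is slightly misaimed, since a homology $n$-sphere $\widetilde{S}^n$ need not admit a CW structure of dimension $n$. The clean argument is the one the paper implicitly uses: under the identification $\widetilde{K}(\widetilde{S}^n)\cong K_n(R)$, the internal cup product of two reduced classes corresponds to the graded product $K_n(R)\times K_n(R)\to K_{2n}(R)$ and therefore has zero component in $K_n(R)$; equivalently, the reduced diagonal $\widetilde{S}^n\to\widetilde{S}^n\wedge\widetilde{S}^n$ is homologically trivial and hence null after mapping into the infinite loop space $BGL(R)^+$. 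With this correction, your argument is complete and matches the paper's.
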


\begin{proof}
The map $x\mapsto\rho_{k}(x)$ is well-defined by general homotopy
considerations. In order to check that we get a group homomorphism, we write
the direct sum of the $K_{n}(R)\left[  1/k\right]  $ as the multiplicative
group%
\[
1+K_{\ast>0}(R)\left[  1/k\right]
\]
where the various products between the $K_{n}$-groups are reduced to $0.$ If
we now take two elements $x$ and $y$ in $K$\textrm{Spin}$_{n}(R),$ we write
$\rho_{k}(x)$ as $1+u$ and $\rho_{k}(y)$ as $1+v$. Then
\[
\rho_{k}(x+y)=\rho_{k}(x)\cdot\rho_{k}(y)=(1+u)\cdot(1+v)=1+u+v
\]
since $u\cdot v=0.$
\end{proof}

\section{Relation with Topology}

Let $V$ be a real vector bundle on a compact space $X$ provided with a
positive definite quadratic form. We assume that $V$ is spinorial of rank
$8n,$ so that the bundle of Clifford algebras may be written as \textrm{End}%
$(E),$ where $E$ is the \textbf{Z}$/2$-graded vector bundle of "spinors" (see
for instance the appendix and \cite{Donovan-Karoubi}). Following Bott
\cite{Bott}, we define the class $\rho_{top}^{k}(V)$ as the image of $1$ by
the composition of the homomorphisms%
\[
K_{\mathbf{R}}(X)\overset{\varphi}{\longrightarrow}K_{\mathbf{R}}%
(V)\overset{\psi^{k}}{\longrightarrow}K_{\mathbf{R}}(V)\overset{\varphi^{-1}%
}{\longrightarrow}K_{\mathbf{R}}(X),
\]
where $K_{\mathbf{R}}$ denotes real $K$-theory and $\varphi$ is Thom's
isomorphism for this theory. One purpose in this section is to show that this
topological class $\rho_{top}^{k}(V)$ coincides with our hermitian Bott class
$\rho_{k}(V)$ in the group $K_{\mathbf{R}}(X)\cong K(R),$ where $R$ is the
ring of real continuous functions on $X.$

The proof of this statement requires a careful definition of the group
$K_{\mathbf{R}}(V),$ since $V$ is not a compact space. A possibility is to
define this group as follows (see \cite[\S 2]{Karoubi livre} for instance).
One considers couples $(G,D),$ where $G$ is a \textbf{Z}$/2$-graded real
vector bundle on $V,$ provided with a metric, and $D$ an endomorphism of $G$
with the following properties:

1) $D$ is an isomorphism outside a compact subset of $V$ and we identify
$(G,D)$ to $0$ if this compact set is empty

2) $D$ is self-adjoint and of degree $1.$

One can show that the Grothendieck group associated to this semi-group of
couples is the reduced $K$-theory of the one-point compactification of $V.$
With this description, Thom's isomorphism
\[
K_{\mathbf{R}}(X)\overset{\varphi}{\longrightarrow}K_{\mathbf{R}}(V)
\]
is easy to describe. It associates to a vector bundle $F$ on $X$ the couple%
\[
\tau=(G,D)=(F\otimes E,1\otimes\rho(v)).
\]
In this formula, $C(V)\cong$ \textrm{End}$(E)$ and $\rho(v)$ denotes the
Clifford multiplication by $\rho(v)$ over a point $v\in V\subset C(V).$ We
note that $\rho(v)$ is an isomorphism outside the $0$-section of $V.$
Therefore, Thom's isomorphism may be interpreted as Morita's equivalence. In
fact, $\varphi$ is the following composition (where $K(C(V))$ is the
$K$-theory of the ring of sections of the algebra bundle $C(V))$%
\[
K_{\mathbf{R}}(X)\longrightarrow K(C(V))\overset{t}{\longrightarrow
}K_{\mathbf{R}}(V),
\]
according to \cite{Karoubi livre} for instance.

\begin{remark}
This class $\rho_{top}^{k}(V)$, which requires a metric and a spinorial
structure on $V,$ is different in general from the algebraic class $\rho
^{k}(V)$, defined for $\lambda$-rings$.$ On the other hand, the quotient
between $\rho_{top}^{k}(V)$ and $(\rho^{k}(V))^{2}$ is a $2$-torsion class
which is not trivial in general, as it is shown in an example at the end of
this Section.
\end{remark}

If we apply the Adams operation to the previous couple $\tau=(G,D)$, one finds
$(\Psi^{k}(F)\cdot\Psi^{k}(E),\Psi^{k}(1\otimes\rho(v)))$ with obvious
definitions. Strictly speaking, $\Psi^{k}(E)$ should be thought of as a
virtual module over $C(V^{k})$ and then we use the diagonal $V\longrightarrow
V^{k}$ in order to view $\Psi^{k}(E)$ as a virtual module over $C(V)$. We use
here the functorial definition of the Adams operation detailed in the proof of
Theorem $2.5.$ Moreover, since $k$ has a square root as a positive real
number, we always have $C(V,kq)\cong C(V,q).$ To sum up, we have proved the
following theorem:

\begin{theorem}
Let $R$ be the ring of real continuous functions on a compact space $X$ and
let $V$ be a real spinorial bundle of rank $8n$ on $X.$ Then the topological
Bott class $\rho_{top}^{k}(V)$ in $K(R)$ coincides with the hermitian Bott
class $\rho_{k}(V)$ of $V,$ viewed as a finitely generated projective module
provided with a positive definite quadratic form and a spinorial structure.
\end{theorem}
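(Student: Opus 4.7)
The plan is to exhibit both $\rho^{k}_{top}(V)$ and $\rho_{k}(V)$ as the image of $1$ under two parallel compositions and then identify them map by map, exploiting the Morita description of Thom's isomorphism recalled just above the theorem. Write $\varphi = t \circ \alpha$, where $\alpha : K_{\mathbf{R}}(X) \cong K(R) \to K(C(V))$ is the Morita equivalence coming from $C(V) \cong \mathrm{End}(E)$, and $t : K(C(V)) \to K_{\mathbf{R}}(V)$ sends a $C(V)$-module of the form $F \otimes E$ to the couple $(F \otimes E, 1 \otimes \rho(v))$. Then $\rho^{k}_{top}(V)$ is the image of $1$ under $\alpha^{-1} \circ t^{-1} \circ \psi^{k} \circ t \circ \alpha$, while by definition $\rho_{k}(V)$ is the image of $1$ under $\alpha_{q}^{-1} \circ \Psi^{k} \circ \alpha$. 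The task reduces to matching the two middle portions.

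Next, I would analyse the effect of the topological $\psi^{k}$ on a Thom class $\tau = (F \otimes E, 1\otimes \rho(v))$. By naturality and multiplicativity, $\psi^{k}(\tau)$ acts on the factor $F \otimes E$ as $\psi^{k}(F) \otimes \psi^{k}(E)$, where $\psi^{k}(E)$ is produced exactly by the power map $P$ of Section~2 followed by the symmetric-group character sending a representation to the trace of the $k$-cycle (as used in the proof of Theorem~\ref{multiplicativity Adams}), the $C(V^{k})$-module structure being pulled back along the diagonal $V\to V^{k}$ to give a $C(V,kq)$-module. Through the map $t$, this is precisely the element $\Psi^{k}(E) \in K(C(V,kq))$ used to build $\rho_{k}(V)$, so the two Adams operations coincide under the Morita description.

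It remains to identify the last map $\alpha_{q}$ of the hermitian construction with the inverse Morita equivalence $\alpha^{-1}$ used on the topological side. Here I would exploit that $q$ is positive definite and $k$ is a positive real number, so that $\sqrt{k}$ exists in $R$. The bundle map $v \mapsto \sqrt{k}\, v$ is an isometry $(V,kq) \to (V,q)$, and it induces a graded $R$-algebra isomorphism $C(V,kq) \cong C(V,q)$ that carries the chosen spinor module $E$ (and the element $u$ fixing it) to itself. Under this isomorphism $\alpha_{q}$ becomes $\alpha$, so the final algebraic arrow agrees with the final topological arrow. Combining with the previous step gives $\rho^{k}_{top}(V) = \rho_{k}(V)$.

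The main obstacle is the middle identification, namely verifying on the nose (not merely up to formal splitting) that the topological $\psi^{k}$ applied to a Clifford–Thom class corresponds through $t$ to the algebraic $\Psi^{k}$ of Section~2. This demands tracking the graded tensor powers with their Koszul signs, the restriction along the diagonal $V \to V^{k}$ that converts $C(V)^{\widehat{\otimes}k}$-modules into $C(V,kq)$-modules, and the action of the trace character on $R(S_{k})$, all compatibly with $t$; the remaining steps—recognising Thom as Morita, and trivialising $C(V,kq)\cong C(V,q)$ via $\sqrt{k}$—are comparatively formal.
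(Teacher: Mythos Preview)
Your proposal is correct and follows essentially the same route as the paper: factor Thom's isomorphism as Morita equivalence composed with the map $t$, identify the topological $\psi^{k}$ on a Thom class with the algebraic $\Psi^{k}$ of Section~2 via the diagonal $V\to V^{k}$, and use the square root of $k$ in $\mathbf{R}$ to trivialise $C(V,kq)\cong C(V,q)$. You are more explicit than the paper about isolating the middle identification as the substantive step, but the argument is the same.
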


For completeness' sake, let us make some explicit computations of this
hermitian Bott class when $X$ is a sphere of dimension $8m.$ Let $V$ be a real
oriented vector bundle of rank $4t$ on $S^{8m},$ generating the reduced real
$K$-group $\widetilde{K}_{\mathbf{R}}(S^{8m})$ and let $W=V\oplus V$ be its
complexification which generates $\widetilde{K}_{\mathbf{C}}(S^{8m}),$ where
$\widetilde{K}_{\mathbf{C}}$ is reduced complex $K$-theory. Let us denote by
$W_{\mathbf{R}}$ the underlying real vector bundle with the associated
spinorial structure. According to \cite[Proposition 7.27]{Karoubi livre}, we
have the formula
\[
c(\rho_{top}^{k}(W_{\mathbf{R}}))=\rho_{\mathbf{C}}^{k}(W)=\rho^{k}(W),
\]
where $c:K_{\mathbf{R}}(S^{8m})\overset{\cong}{\longrightarrow}K_{\mathbf{C}%
}(S^{8m})$ denotes the complexification. Therefore, we are reduced to
computing the class $\rho^{k}$ for complex vector bundles on even dimensional
spheres $X.$

If $X=S^{2}$, $K_{\mathbf{C}}(S^{2})$ is free of rank 2, generated by $1$ and
the Hopf line bundle $L.$ The classical Bott class is then computed from the
formula%
\[
\rho^{k}(L)=1+L+...+L^{k-1}.
\]
Since $(L-1)^{2}=0,$ another way to write this sum is to consider Taylor's
expansion of the polynomial%
\[
1+X+...+X^{k-1}%
\]
at $X=1.$ We get the formula%
\[
\rho^{k}(L)=k+\left[  1+2+...+(k-1)\right]  (L-1)=k+k(k-1)(L-1)/2.
\]
If $x_{2}$ denotes the class $L-1,$ we also can write
\[
\rho^{k}(x_{2})=1+\left[  1+2+...+(k-1)\right]  /k\cdot x_{2}.
\]
We compute in the same way the Bott class on $\widetilde{K}_{\mathbf{C}}%
(S^{4})\cong$ \textbf{Z}$,$ generated by the product%
\[
x_{4}=(L_{1}-1)\cdot(L_{2}-1)
\]
where $L_{1}$and $L_{2}$ are two copies of the Hopf line bundle on $S^{2}$.
Since we again have $(L_{i}-1)^{2}=0,$ it is sufficient to compute the first
terms of Taylor's expansion of the polynomial%
\[
f(X,Y)=1+XY+X^{2}Y^{2}+...+X^{k-1}Y^{k-1}%
\]
at the point $(1,1).$ We get the second derivative ($\delta^{2}f/\delta
x\delta y)/k^{2}$ at the point $(1,1)$ multiplied by $x_{4}.$ In other words,
we have%
\[
\rho^{k}(x_{4})=1+\left[  1+2^{2}+...+(k-1)^{2})\right]  /k^{2}\cdot x_{4}.
\]

More generally, on $\widetilde{K}_{\mathrm{C}}(S^{2r})\cong$ \textbf{Z}$,$
generated by
\[
x_{2r}=(L_{1}-1)\cdot\cdot\cdot(L_{r}-1),
\]
we find the formula%
\[
\rho^{k}(x_{2r})=1+\left[  1+2^{r}+...+(k-1)^{r}\right]  /k^{r}\cdot x_{2r}.
\]
Since $c(\rho^{k}(x_{8m}))=\rho_{top}^{k}(y_{8m}),$ where $y_{8m}$(resp
$x_{8m})$ generates $\widetilde{K}_{\mathbf{R}}(S^{8m})$ (resp. $\widetilde
{K}_{\mathbf{C}}(S^{8m}$)), we deduce from the last formula the following proposition.

\begin{proposition}
Let $V$ be a real vector bundle of rank $8t$ generating the real reduced
$K$-theory of the sphere $S^{8m}$ and let $y_{8m}=V-8t.$ We then have the
formula%
\[
\rho_{top}^{k}(y_{8m})=1+\left[  1+2^{4m}+...+(k-1)^{4m}\right]  \cdot
y_{8m}.
\]

\end{proposition}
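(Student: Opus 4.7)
The plan is to reduce the computation to the complex $K$-theory of $S^{8m}$ via complexification, and then apply the explicit formula for $\rho^{k}$ on the complex generator $x_{8m}$ derived in the preceding paragraphs.

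As a first step I would observe that, since $\widetilde{K}_{\mathbf{R}}(S^{8m}) \cong \mathbf{Z}$ and the square of any reduced class on a sphere vanishes by dimension, there is a unique coefficient $\mu \in \mathbf{Z}[1/k]$ with $\rho_{top}^{k}(y_{8m}) = 1 + \mu\, y_{8m}$; so the problem is reduced to identifying this $\mu$.

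Next I would invoke the theorem established earlier in this section, which states that $c(\rho_{top}^{k}(V)) = \rho^{k}(W)$ for the complexification $W$ of a real spinorial bundle $V$. Extending the identity to virtual classes yields $c(\rho_{top}^{k}(y_{8m})) = \rho^{k}(c(y_{8m}))$. By Bott periodicity, the complexification $c\colon \widetilde{K}_{\mathbf{R}}(S^{8m}) \to \widetilde{K}_{\mathbf{C}}(S^{8m})$ is an isomorphism of infinite cyclic groups sending $y_{8m}$ to the generator $x_{8m}$, so the identity becomes $1 + \mu\, x_{8m} = \rho^{k}(x_{8m})$. Substituting the formula for $\rho^{k}(x_{2r})$ established just before the proposition, specialized to $r = 4m$, then produces the value $\mu = 1 + 2^{4m} + \cdots + (k-1)^{4m}$ as claimed.

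The main point that requires care is to keep the normalizations of the real and complex generators aligned throughout, so that no stray multiplicative factor creeps into the coefficient when passing between $y_{8m}$ and $x_{8m}$. This is secured by the earlier explicit identification $W = V \oplus V$ of the complexification as exactly a generator of $\widetilde{K}_{\mathbf{C}}(S^{8m})$, which pins down the correspondence between generators precisely.
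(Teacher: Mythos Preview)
Your approach coincides with the paper's: its entire argument is the sentence immediately preceding the proposition, which identifies $\rho_{top}^{k}(y_{8m})$ with $\rho^{k}(x_{8m})$ through the complexification isomorphism $c:\widetilde{K}_{\mathbf{R}}(S^{8m})\to\widetilde{K}_{\mathbf{C}}(S^{8m})$ and then reads off the coefficient from the previously derived formula for $\rho^{k}(x_{2r})$.

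One small slip worth flagging: the identity established earlier is $c(\rho_{top}^{k}(W_{\mathbf{R}}))=\rho^{k}(W)$ for a \emph{complex} bundle $W$ and its underlying real bundle $W_{\mathbf{R}}$, not $c(\rho_{top}^{k}(V))=\rho^{k}(c(V))$ for an arbitrary real spinorial $V$ and its complexification, as your phrasing ``$W$ the complexification of $V$'' suggests. These are different statements (realification versus complexification go in opposite directions), and the general naturality $c\circ\rho_{top}^{k}=\rho^{k}\circ c$ you invoke is not what was proved. On $S^{8m}$ this does not matter, because you only need the coefficient of the generator and $c$ is an isomorphism there; but the intermediate identity $c(\rho_{top}^{k}(y_{8m}))=\rho^{k}(c(y_{8m}))$ as written would require justification if used elsewhere.
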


\begin{remark}
If we assume that $k$ is odd, the sum $1+2^{r}+...+(k-1)^{r}$ has the same
parity as $1+2+...+(k-1)$ or equivalently $(k-1)/2\ $which is also odd for an
infinite number of odd $k^{\prime}s.$
\end{remark}

A more delicate example is the case of the sphere $X=S^{8m+2}$ with $m>0.$ It
is well known that the realification map%

\[
\mathrm{Z}\text{ }\mathbb{\cong}\text{ }\widetilde{K}_{\mathrm{C}}%
(S^{8m+2})\longrightarrow\widetilde{K}_{\mathrm{R}}(S^{8m+2})\cong\mathrm{Z}/2
\]
is surjective. Let $V$ be a complex vector bundle over $S^{8m+2}$ which
generates $\widetilde{K}_{\mathrm{C}}(S^{8m+2})$. We consider the following
diagram%
\[%
\begin{array}
[c]{ccc}%
K_{\mathrm{C}}(V) & \overset{\Psi^{k}}{\longrightarrow} & K_{\mathrm{C}}(V)\\
\uparrow\phi_{\mathrm{C}} &  & \downarrow\phi_{\mathrm{C}}^{-1}\\
K_{\mathrm{C}}(S^{8m+2}) &  & K_{\mathrm{C}}(S^{8m+2})
\end{array}
,
\]
where $\phi_{\mathrm{C}}$ is Thom's isomorphism in complex $K$-theory. By
definition, we have%
\[
\rho^{k}(V)=\phi_{\mathrm{C}}^{-1}(\Psi^{k}(\phi_{\mathrm{C}}(1))).
\]
Since $m>0,V$ is also a spinorial bundle and we therefore have a commutative
diagram up to isomorphism%
\[%
\begin{array}
[c]{ccc}%
K_{\text{\textrm{C}}}(V) & \overset{r}{\longrightarrow} & K_{\text{\textrm{R}%
}}(V)\\
\uparrow\phi_{\text{\textrm{C}}} &  & \uparrow\phi_{\text{\textrm{R}}}\\
K_{\mathrm{C}}(S^{8m+2}) & \overset{r}{\longrightarrow} & K_{\text{\textrm{R}%
}}(S^{8m+2})
\end{array}
,
\]
where $\phi_{\text{\textrm{R}}}$ is Thom's isomorphism in real $K$-theory and
$r$ is the realification. Since the Adams operation $\Psi^{k}$ commutes with
$r$, we have the identity%
\[
\rho_{top}^{k}(y_{8m+2})=1+\left[  1+2^{4m+1}+...+(k-1)^{4m+1}\right]  \cdot
y_{8m+2}=1+y_{8m+2}%
\]
if $k$ and $(k-1)/2$ are odd.

Let $V_{0}$ be the underlying real vector bundle of $V.$ The last identity
implies that $\rho_{top}^{k}(V_{0})=(1+y_{8m+2})\cdot k^{4t}$ if $V_{0}$ is of
rank $8t.$ Therefore $\rho_{top}^{k}(V_{0})$ cannot be a square, even modulo
the Picard group (which is trivial in this case). This implies that the
corrected hermitian Bott class defined in \ref{Corrected Bott class}:%
\[
\overline{\rho}_{k}:W\mathrm{Spin}(R)\longrightarrow K(R)^{\times
}/(\mathrm{Pic}(R))^{(k-1)/2}=K(R)^{\times}%
\]
is not trivial either (we recall that $R$ is the ring of real continuous
functions on the sphere $S^{8m+2}).$

\bigskip

\begin{remark}
We should add a few words if $k$ is even. If $k=2$ for instance and if $X$ is
the sphere $S^{8n}$, we find that%
\[
\rho^{2}(y_{8n})=1/2^{4n}\cdot y_{8n}%
\]
We get the same result for bundles with negative definite quadratic forms.
Since the hyperbolic map
\[
K(C_{\mathrm{R}}(S^{8n}))\cong\mathrm{Z}\longrightarrow KQ(C_{\mathrm{R}%
}(S^{8n}))\cong\mathrm{Z}\mathbb{\oplus}\mathrm{Z}%
\]
is the diagonal, we see that the class $\rho_{2}$ of an hyperbolic module
belongs to $1/2^{4n-1}\mathrm{Z}.$ Therefore, at least for this example, the
class $\rho_{2}$ also detects non trivial Witt classes.
\end{remark}

\section{Oriented Azumaya algebras}

Another purpose of this paper is the extension of our definitions to Azumaya
algebras \cite{Auslander-G.}\cite{Bass Clifford}, beyond the example of
Clifford algebras. We first consider the non graded case.

\begin{defn}
Let $A$ be an Azumaya algebra. We say that $A$ is "oriented" if the
permutation of the two copies of $A$ in $A^{\otimes2}$ is given by an inner
automorphism associated to an element $\tau\in(A^{\otimes2})^{\times}$ of
order $2.$
\end{defn}

As a matter of fact, as it was pointed out to us by Knus andTignol, any
Azumaya algebra is oriented \footnote{However, the situation is different in
the \textrm{Z}/2-graded case as we shall show below.}. This is a theorem
quoted by Knus and Ojanguren\cite[Proposition 4.1, p.. 112.]{Knus-Ojanguren}
and attributed to O. Goldman. We shall illustrate it by a few typical examples.

The first easy but fundamental example is $A=$ \textrm{End}$(P),$ where $P$ is
a faithful finitely generated projective module. We identity $B=A\otimes A$
with \textrm{End}$(P\otimes P)$ and $B^{\times}$ with \textrm{Aut}$(P\otimes
P).$ The element $\tau$ required is simply the permutation of the two copies
of $P,$ viewed as an element of $(A\otimes A)^{\times}=$ \textrm{Aut}%
$(P\otimes P),$ as it can be shown by a direct computation.

Let now $D$ be a division algebra over a field $F$. We claim that $D$ is also
oriented. In order to show this, we consider the tensor product $A=$
$D\otimes_{F}F_{1},$ where $F_{1}$ is a finite Galois extension of $F,$ such
that $A$ is $F$-isomorphic to a matrix algebra \textrm{M}$_{n}(F_{1}%
)=\mathrm{End}(F_{1}^{n})$ and is therefore oriented according to our first
example. Let $G$ be the Galois group of $F_{1}$ over $F,$ so that $D$ is the
fixed algebra of $G$ acting on $A.$ If we compose this action by the usual
action of the Galois group on $\mathrm{M}_{n}(F_{1}),$ we get automorphisms of
$\mathrm{M}_{n}(F_{1})$ as a $F_{1}$-algebra which are inner by
Skolem-Noether's theorem. If $g\in G,$ we let $\alpha_{g}$ be an element of
$\mathrm{Aut}(F_{1}^{n})$ so that the action $\rho(g)$ of $g$ on $A$ is given
by the composition of the inner automorphism associated to $a_{g}$ with the
usual Galois action on $\mathrm{M}_{n}(F_{1}).$

Let now $\tau^{\prime}$ be the permutation of the two copies of $A$ in the
tensor product $A\otimes_{F_{1}}A.$ It is induced by the inner automorphism
associated to a specific element $\tau$ in \textrm{Aut}$(F_{1}^{n}%
\otimes_{F_{1}}F_{1}^{n})$ of order $2$ which commutes with $\rho
(g)\otimes\rho(g)$ Therefore, $\tau$ is invariant by the action of $G$ and
belongs to $(D\otimes_{F}D)^{\times},$ considered as a subgroup of
$(A\otimes_{F_{1}}A)^{\times}.$

From a different point of view, let us consider the algebra $R$ of complex
continuous functions on a connected compact space $X.$ According to a
well-known dictionnary of Serre and Swan, one may consider an Azumaya algebra
$A$ over $R$ as a bundle $\widetilde{A}$ of algebras over $X$ with fiber
\textrm{End}$(P),$ where $P=$\textbf{C}$^{n}$. The structural group of this
bundle is the projective linear group \textrm{Aut}$(P)/$\textbf{C}$^{\times}.$
In the same way, the structural group of $A^{\otimes2}$ is \textrm{Aut}%
$(P\otimes P)/$\textbf{C}$^{\times}.$ Therefore, the inner automorphism of
$A^{\otimes2},$ permuting the two copies of $A,$ is induced by the permutation
of the two copies of $P$. This is well defined globally since this permutation
commutes with the transition functions of $\widetilde{A}$.

Let $A$ be any Azumaya algebra. We would like to lift the action $\sigma_{k}$
of the symmetric group $S_{k}$ on $A^{\otimes k}$ to ($A^{\otimes k}%
$)$^{\times},$ such that we have a commutative diagram%
\[%
\begin{array}
[c]{ccc}
&  & (A^{\otimes k})^{\times}\\
& \overset{\widetilde{\sigma}_{k}}{\nearrow} & \downarrow\gamma\\
S_{k} & \overset{\sigma_{k}}{\longrightarrow} & \mathrm{Aut}(A^{\otimes k})
\end{array}
,
\]
where $\widetilde{\sigma}_{k}$ is a group homomorphism and $\gamma$ induces
inner automorphisms. This program is achieved in the "Book of Involutions"
\cite[Proposition 10.1, pg. 115.]{Book of involutions}, using again the
"Goldman element" quoted above. For completeness' sake, we shall sketch a
proof below, since we shall need it in the graded case too.

In order to define $\widetilde{\sigma}_{k}$, we use the classical description
of the symmetric group in terms of generators $\tau_{i}=(i,i+1),i=1,...,k-1,$
with the relations $(\tau_{i})^{2}=1,\tau_{i}\tau_{i+1}\tau_{i}=\tau_{i+1}%
\tau_{i}\tau_{i+1}$ and $\tau_{i}\tau_{j}=\tau_{j}\tau_{i}$ if $\left[
i-j\right\vert >1.$ Since $A$ is oriented, we may view the $\tau_{i}$ in
($A^{\otimes k}$)$^{\times}$ as the tensor product of $\tau$ by the
appropriate number of $1=Id_{A}.$ We easily check the previous relations,
except the typical one%
\[
\tau_{1}\tau_{2}\tau_{1}=\tau_{2}\tau_{1}\tau_{2}.
\]
(one may replace the couple $(1,2)$ by $(i,i+1)).$ However, we already have
$\tau_{1}\tau_{2}\tau_{1}=\lambda\tau_{2}\tau_{1}\tau_{2},$ where $\lambda\in
R^{\times}.$ The identity
\[
(\tau_{1}\tau_{2}\tau_{1})^{2}=(\tau_{2}\tau_{1}\tau_{2})^{2}=1
\]
also implies that $(\lambda)^{2}=1.$ The solution to our lifting problem is
then to keep the $\tau_{i}$ for $i$ odd and replace the $\tau_{i}$ for $i$
even by $\lambda\tau_{i},$ in order to get the required relations among the
$\tau^{\prime}s.$

The previous considerations may be translated in the framework of
\textbf{Z}$/2$-graded Azumaya algebras \cite[pg. 160]{Bass Clifford}. In this
case, we must require the element $\tau$ in the definition to be of degree
$0.$ Unfortunately, in general, a Clifford algebra is not oriented in the
graded sense. As a counterexample, we may choose $A=C^{0,1}.$ Then the
permutation of the two copies of $A$ in $A\otimes A=C^{0,2}$ is given by the
inner automorphism associated to $e_{1}+e_{2}$ which is of degree $1$ and not
of degree $0,$ as required in our definition. However, if $V$ is a module
which is oriented and of even rank, the associated Clifford algebra $C(V)$ is
oriented as we shall show below.

\smallskip

To start with, let $V$ and $V^{\prime}$ be two quadratic modules such that $V$
is of even rank and oriented. The argument used in Section $1$ shows the
existence of an element $v$ in $C^{0}(V)\otimes C^{0}(V^{\prime})\subset
C(V)\widehat{\otimes}C(V^{\prime})\cong C(V\oplus V^{\prime})$ which
anticommutes with the elements of $V$ and commutes with the elements of
$V^{\prime}:$ one puts $v=u\otimes1$ with the notations of Section $1$ (see
Remark \ref{Definition of u}). Moreover, $(v)^{2}=1$ and $v\in$\textrm{Spin}%
$(V\oplus V^{\prime}).$

Let us choose $V^{\prime}=V$ and put $T=V\oplus V.$ Since $2$ is invertible in
$R,$ $T$ is isomorphic to the orthogonal sum $T_{1}\oplus T_{2},$ where
$T_{1}=\left\{  v,-v\right\}  $ and $T_{2}=\left\{  v,v\right\}  .$ Thanks to
this isomorphism, the permutation of the two summands of $V\oplus V$ is
translated into the involution $(t_{1},t_{2})\mapsto(-t_{1},t_{2})$ on
$T_{1}\oplus T_{2}$. Therefore, the previous argument shows the existence of a
canonical element $u_{12}\in$ \textrm{Spin}$(V\oplus V)$ of square $1$ such
that the transformation%
\[
x\mapsto u_{12}^{-1}.x.u_{12}%
\]
permutes the two summands of $V\oplus V.$ It follows immediately that the
Clifford algebra $C(V)$ is oriented (in the graded sense) if $V$ is oriented
and of even rank.

From the previous general considerations, we deduce a natural representation
of the symmetric group $S_{k}$ in the group \textrm{Spin}$(V^{k})$ which lifts
the canonical representation of $S_{k}$ in \textrm{SO}$(V^{k}).$ To sum up, we
have proved the following Theorem:

\begin{theorem}
Let $V$ be a quadratic module of even rank which is oriented and let%
\[
\sigma_{k}:S_{k}\longrightarrow\mathrm{SO}(V^{k})
\]
be the standard representation. Then there is a canonical lifting%
\[
\widetilde{\sigma}_{k}:S_{k}\longrightarrow\mathrm{Spin}(V^{k}),
\]
such that the following diagram commutes%
\[%
\begin{array}
[c]{ccc}
&  & \mathrm{Spin}(V^{k})\\
& \overset{\widetilde{\sigma}_{k,}}{\nearrow} & \downarrow\pi\\
S_{k} & \overset{\sigma_{k}}{\longrightarrow} & \mathrm{SO}(V^{k})
\end{array}
.
\]
In other words, the Clifford algebra $C(V)$ is a \textbf{Z}$/2$-graded
oriented Azumaya algebra.
\end{theorem}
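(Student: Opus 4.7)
The plan is to mimic the construction carried out earlier for oriented (non-graded) Azumaya algebras, but using the graded Spin-group refinement made available by the element $u_{12}\in\mathrm{Spin}(V\oplus V)$ constructed just above. Concretely, for each $i=1,\dots,k-1$, I would define
\[
\widetilde{\tau}_i \;=\; 1_{V^{i-1}} \,\widehat{\otimes}\, u_{12} \,\widehat{\otimes}\, 1_{V^{k-i-1}} \;\in\; \mathrm{Spin}(V^k),
\]
using the canonical decomposition $V^k\cong V^{i-1}\oplus(V\oplus V)\oplus V^{k-i-1}$ and the fact that $\mathrm{Spin}$ of an orthogonal sum contains the product of the $\mathrm{Spin}$-groups of its summands. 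Since $u_{12}$ has square $1$ and its image under $\pi$ is the transposition swapping the two copies of $V$, each $\widetilde{\tau}_i$ is an element of square $1$ in $\mathrm{Spin}(V^k)$ whose image under $\pi$ is exactly the Coxeter generator $\sigma_k(\tau_i)\in\mathrm{SO}(V^k)$.

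The next step is to verify the Coxeter presentation of $S_k$ on the $\widetilde{\tau}_i$. The relation $\widetilde{\tau}_i^{\,2}=1$ is immediate from $u_{12}^2=1$, and the commuting relation $\widetilde{\tau}_i\widetilde{\tau}_j=\widetilde{\tau}_j\widetilde{\tau}_i$ for $|i-j|>1$ is immediate because the two factors are supported on disjoint blocks of the graded tensor product. The delicate point is the braid relation
\[
\widetilde{\tau}_i\,\widetilde{\tau}_{i+1}\,\widetilde{\tau}_i \;=\; \widetilde{\tau}_{i+1}\,\widetilde{\tau}_i\,\widetilde{\tau}_{i+1}.
\]
Applying $\pi$ to both sides gives the braid relation in $\mathrm{SO}(V^k)$, which holds. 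Hence the two sides differ by an element of $\ker(\pi)=\mu_2(R)$, i.e.\ there is a scalar $\lambda\in R^\times$ with $\lambda^2=1$ such that $\widetilde{\tau}_i\widetilde{\tau}_{i+1}\widetilde{\tau}_i=\lambda\,\widetilde{\tau}_{i+1}\widetilde{\tau}_i\widetilde{\tau}_{i+1}$. This is exactly the situation already dealt with in the non-graded oriented Azumaya case treated earlier in the section: squaring the identity forces $\lambda^2=1$, and one corrects by leaving the $\widetilde{\tau}_i$ with $i$ odd unchanged while replacing $\widetilde{\tau}_i$ with $\lambda\widetilde{\tau}_i$ for $i$ even. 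Since $\lambda$ is central (it lies in $\mu_2(R)\subset R^\times$), this rescaling preserves the orders and the disjoint-block commuting relations, and it turns the braid relation into an equality.

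With this correction in place, the universal property of the Coxeter presentation yields a well-defined group homomorphism $\widetilde{\sigma}_k\colon S_k\to\mathrm{Spin}(V^k)$. Commutativity of the triangle with $\sigma_k$ follows fibrewise on generators, since multiplication by the scalar $\lambda\in\mu_2(R)$ is killed by $\pi$, so $\pi(\widetilde{\tau}_i)=\sigma_k(\tau_i)$ for every $i$. Canonicity of $\widetilde{\sigma}_k$ comes from the canonicity of $u_{12}$ established in the discussion of Remark~\ref{Definition of u} applied to $V\oplus V$: no auxiliary choices enter its construction beyond the orientation of $V$.

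The main obstacle I anticipate is the braid-relation verification and, in particular, identifying the ambiguity $\lambda$ explicitly enough to be sure that the rescaling trick (odd indices unchanged, even indices multiplied by $\lambda$) simultaneously repairs \emph{all} adjacent braid relations $\widetilde{\tau}_i\widetilde{\tau}_{i+1}\widetilde{\tau}_i=\widetilde{\tau}_{i+1}\widetilde{\tau}_i\widetilde{\tau}_{i+1}$, for every $i=1,\dots,k-2$, without disturbing the order-two and disjoint-commutation relations. The key observation that makes this go through is that $\lambda$ is the same scalar for every $i$ (because $u_{12}$ is defined uniformly in the pair of adjacent slots), so the parity-based rescaling multiplies both sides of each adjacent braid relation by exactly one extra factor of $\lambda$, converting the ambiguity into an equality.
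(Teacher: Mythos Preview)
Your proposal is correct and follows essentially the same approach as the paper: the discussion preceding the theorem constructs the element $u_{12}\in\mathrm{Spin}(V\oplus V)$ and then invokes exactly the $\lambda$-correction argument (odd-indexed generators unchanged, even-indexed ones rescaled by $\lambda$) that you reproduce. The only minor addition in the paper is the Remark following the theorem, which notes that one can alternatively verify the Coxeter relations for the $\tau_i=u_{i,i+1}$ by an explicit computation in $C(V^k)$, bypassing the $\lambda$-correction; but this is offered as a supplement, not as the primary argument.
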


\begin{Remarks}
One can also make an explicit computation in the Clifford algebra $C(V^{k})$
with the obvious elements $\tau_{i}=u_{i,i+1}.$; one checks they satisfy the
required relations for the generators of the symmetric group $S_{k}.$
Moreover, these liftings for various $k^{\prime}s$ are of course compatible
with each other. If $R$ is an integral domain, we note that $\widetilde
{\sigma}_{k}$ is unique, once $\widetilde{\sigma}_{2}$ is given.
\end{Remarks}

\section{Adams operations revisited}

In this Section we assume that $V$ is a quadratic $R$-module which is oriented
and of even rank, so that the Clifford algebra is a \textrm{Z}/2-graded
oriented Azumaya algebra.

If $k!$ is invertible in $R$ we have defined Adams operations in a functorial
way:%
\[
\Psi^{k}:K(C(V))\longrightarrow K(C(V(k))).
\]
The purpose of this Section is to define similars operation $\overline{\Psi
}^{k}$ under another type of hypothesis: $2k$ is invertible in $R$ and $R$
contains the ring of integers in the $k$-cyclotomic extension of $\mathbb{Q}$
which is
\[
\Omega_{k}=\mathbf{Z}(\omega)=\mathbf{Z}\left[  x\right]  /(\Phi_{k}(x)).
\]
Here $\Phi_{k}(x)$ is the cyclotomic polynomial and $\omega$ is the class of
$x$. We conjecture that $\overline{\Psi}^{k}=\Psi^{k}$ (which is defined via
Newton polynomials from the $\lambda$-operations) but we are not able to prove
it, except when $k!$ is invertible in $R.$ We also want to extend these
operations $\Psi^{k}$ and $\overline{\Psi}^{k}$ to oriented Azumaya algebras
(not only Clifford algebras) which were defined in the previous Section.

\smallskip

The idea to define $\overline{\Psi}^{k}$ is a remark by Atiyah \cite[Formula
$2.7$]{Atiyah} (used already in Section 3) that Adams operations may be
defined using the cyclic group \textbf{Z}$/k$ instead of the symmetric group
$S_{k}$ (if $k!$ is invertible in $R).$ More precisely, the Adams operation
$\Psi^{k}$ is induced by the homomorphism $R(S_{k})\longrightarrow$ \textbf{Z}
which associates to a representation $\sigma$ its character on the cycle
$(1,2...,k).$ Therefore, if we put $F=E^{\otimes k},$ we see that
\[
\Psi^{k}(E)=%
{\textstyle\sum\limits_{j=0}^{k-1}}
F_{\omega^{j}}\cdot\omega^{j},
\]
where $\omega$ is a primitive $k^{th}$-root of unity and where $F_{\omega^{j}%
}$is the eigenmodule corresponding to the eigenvalue $\omega^{j}.$ The
previous sum belongs in fact to the subgroup $K(C(V(k))$ of $K(C(V(k))\otimes
_{\mathbf{Z}}\Omega_{k}.$

If we only assume that $k$ is invertible in $R,$ we can consider the previous
sum as a new operation. More precisely, we define%
\[
\overline{\Psi}^{k}(E)=%
{\textstyle\sum\limits_{j=0}^{k-1}}
F_{\omega^{j}}\cdot\omega^{j}.
\]
In this new setting, this sum belongs to the group $K(C(V(k)))\otimes
_{\mathbf{Z}}\Omega_{k}$ and not necessarily to the subgroup $K(C(V(k)))$.
This definition makes sense since we have assumed $k$ invertible in $R,$ so
that $F$ splits as the direct sum of the eigenmodules associated to the
eigenvalues $\omega^{j}$, where $0\leq j\leq k-1.$ Since we work in the
\textbf{Z}$/2$-graded case, we also have to assume that $2$ is invertible in
$R.$

If $k$ is prime, because of the underlying action of the symmetric group on
$F,$ the eigenmodules $F_{\omega^{j}}$ are isomorphic to each other when
$1\leq j\leq k-1,$ so that this definition of $\overline{\Psi}^{k}(E)$ reduces
to $F_{0}-F_{\omega}.$We may be more precise and choose as a model of the
symmetric group $S_{k}$ the group of permutations of the set \textbf{Z}$/k.$
One generator $T$ of the cyclic group \textbf{Z}$/k$ is the permutation
$x\mapsto x+1.$ If $\alpha$ is a generator of the multiplicative cyclic group
$($\textbf{Z}$/k)^{\times},$ the permutation $x\longmapsto\alpha^{s}x,$ where
$s$ runs from $1$ to $k-2,$ enables us to identify all the eigenmodules
$F_{\omega^{j}},j=2,..,k-1$ with $F_{\omega}.$ We therefore get the following theorem:

\begin{theorem}
Let $E$ be a graded $C(V)$-module and let us assume that $2k$ is invertible in
$R$ and that the $k^{th}$-roots of unity belong to $R$. We define
$\overline{\Psi}^{k}(E)$ in the group $K(C(V(k)))\otimes_{\mathbf{Z}}%
\Omega_{k}$ by the following formula
\[
\overline{\Psi}^{k}(E)=%
{\textstyle\sum\limits_{j=0}^{k-1}}
F_{\omega^{j}}\cdot\omega^{j}.
\]
If $E_{0}$ and $E_{1}$ are two such modules, we have
\[
\overline{\Psi}^{k}(E_{0}\otimes E_{1})=\overline{\Psi}^{k}(E_{0}%
)\cdot\overline{\Psi}^{k}(E_{1})
\]
in the Grothendieck groups $K(C(V(2k)))\otimes_{\mathbf{Z}}\Omega_{k}.$
Moreover, if $k$ is prime, $\overline{\Psi}^{k}(E)$ belongs to
$K(C(V(k)))\subset K(C(V(k)))\otimes_{\mathbf{Z}}\Omega_{k}$ and we have the
following formula in $K(C(V(k):$%
\[
\overline{\Psi}^{k}(E_{0}\oplus E_{1})=\overline{\Psi}^{k}(E_{0}%
)+\overline{\Psi}^{k}(E_{1})\text{.}%
\]
Finally, the operation $\overline{\Psi}^{k}$ coincides with the usual Adams
operation $\Psi^{k}$ if $k!$ is invertible in $R.$
\end{theorem}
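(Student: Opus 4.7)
The plan is to verify, in order, well-definedness of $\overline{\Psi}^k(E)$, the multiplicativity formula, the integrality and additivity statements when $k$ is prime, and the coincidence with $\Psi^k$ when $k!$ is invertible. Throughout I will write $F = E^{\widehat{\otimes} k}$, viewed as a $\mathbf{Z}/2$-graded $C(V)^{\widehat{\otimes} k} = C(V^k)$-module, with the cyclic group $\mathbf{Z}/k$ acting by cyclic permutation with the Koszul sign conventions from Section 2.

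First I would set up the decomposition. The diagonal map embeds $C(V(k))$ into $C(V^k)$, and under this embedding $C(V(k))$ lands in the $\mathbf{Z}/k$-invariants, so the cyclic action commutes with the $C(V(k))$-module structure on $F$. Since $k$ is invertible in $R$ and $R$ contains the primitive $k$-th roots of unity, the projectors $e_j = \tfrac{1}{k}\sum_{i=0}^{k-1}\omega^{-ij} T^i$ produce a canonical decomposition $F = \bigoplus_{j=0}^{k-1} F_{\omega^j}$ of $F$ as a $\mathbf{Z}/2$-graded $C(V(k))$-module, making $\overline{\Psi}^k(E) = \sum_j F_{\omega^j}\,\omega^j$ a well-defined element of $K(C(V(k))) \otimes_{\mathbf{Z}} \Omega_k$. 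For multiplicativity, the canonical equivariant isomorphism $(E_0 \otimes E_1)^{\widehat{\otimes} k} \cong E_0^{\widehat{\otimes} k} \otimes E_1^{\widehat{\otimes} k}$ (with diagonal $\mathbf{Z}/k$-action) yields
\[
(F_0 \otimes F_1)_{\omega^n} \;=\; \bigoplus_{a+b \equiv n \,(\mathrm{mod}\,k)} (F_0)_{\omega^a} \otimes (F_1)_{\omega^b},
\]
and substituting and reindexing immediately gives $\overline{\Psi}^k(E_0 \otimes E_1) = \overline{\Psi}^k(E_0)\cdot\overline{\Psi}^k(E_1)$ under the cup product $K(C(V(k)))\times K(C(V(k))) \to K(C(V(2k)))$ introduced in Section 2.

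Now assume $k$ is prime. For integrality I would invoke the full symmetric group $S_k$, which acts on $V^k$ fixing the diagonal and therefore on $F$ by $C(V(k))$-linear (graded) automorphisms. The normalizer of $\langle T\rangle$ in $S_k$ is the affine group $\mathbf{Z}/k \rtimes (\mathbf{Z}/k)^\times$, and a coset representative of $\alpha \in (\mathbf{Z}/k)^\times$ carries $F_{\omega^j}$ isomorphically onto $F_{\omega^{\alpha j}}$. Since $(\mathbf{Z}/k)^\times$ acts transitively on $\{1,\dots,k-1\}$, all $F_{\omega^j}$ for $j\ne 0$ are isomorphic to a single $C(V(k))$-module $G$, and therefore
\[
\overline{\Psi}^k(E) \;=\; F_0 \,+\, G\sum_{j=1}^{k-1}\omega^j \;=\; F_0 - G \;\in\; K(C(V(k))).
\]
For additivity I would decompose $(E_0\oplus E_1)^{\widehat{\otimes} k}$ along the $2^k$ multi-indices $s\in\{0,1\}^k$. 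The only $\mathbf{Z}/k$-fixed indices are the two constant ones, which contribute $\overline{\Psi}^k(E_0)+\overline{\Psi}^k(E_1)$. When $k$ is prime every non-constant orbit is free, so the sum of its summands is isomorphic to the regular representation of $\mathbf{Z}/k$ tensored with a $C(V(k))$-module; all its eigenspaces are isomorphic and its contribution to $\overline{\Psi}^k$ is a multiple of $\sum_{j=0}^{k-1}\omega^j = 0$.

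Finally, when $k!$ is invertible, the identification $\overline{\Psi}^k = \Psi^k$ follows from Atiyah's description recalled inside Theorem \ref{multiplicativity Adams}: $\Psi^k$ is the composite of the power map with the homomorphism $R(S_k)\to\mathbf{Z}$ sending a representation $\rho$ to $\mathrm{tr}(\rho(c_k))$ for the $k$-cycle $c_k = T$. Decomposing $F$ into $S_k$-isotypes (legitimate since $k!$ is invertible) and then restricting each isotype to $\langle c_k\rangle$, the character identity $\mathrm{tr}(\rho(c_k)) = \sum_j \omega^j \dim (\rho|_{\mathbf{Z}/k})_{\omega^j}$ lifts to the module-level identity $\Psi^k(E) = \sum_j \omega^j F_{\omega^j} = \overline{\Psi}^k(E)$. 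The main obstacle I expect is the integrality step under the prime hypothesis: one must check carefully that the normalizer elements really act by $C(V(k))$-linear graded isomorphisms (so that the transitivity argument is an equality in $K(C(V(k)))$, not merely at the level of character values) and that, on a free orbit, the graded regular-representation decomposition behaves just as in the ungraded setting; once these verifications are in place, the remaining computations are formal.
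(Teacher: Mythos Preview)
Your proposal is correct and follows essentially the same approach as the paper: the multiplicativity is obtained by decomposing the eigenmodules of $T\otimes T$, additivity for $k$ prime comes from the observation that the non-constant multi-index summands form free $\mathbf{Z}/k$-orbits and hence contribute $\sum_j\omega^j=0$, the integrality uses the action of $(\mathbf{Z}/k)^\times\subset S_k$ to identify all $F_{\omega^j}$ for $j\neq 0$ (the paper records this just before the theorem rather than inside the proof), and the comparison with $\Psi^k$ is exactly Atiyah's remark. Your presentation is more detailed than the paper's, and the point you flag as a potential obstacle---that the normalizer elements act $C(V(k))$-linearly---is handled by the standing assumption in Section~6 that $V$ is oriented of even rank, so the $S_k$-action on $E^{\widehat{\otimes}k}$ exists and is equivariant for the $S_k$-action on $C(V^k)$, which fixes the diagonal copy $C(V(k))$.
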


\begin{proof}
When $k$ is prime, we have the isomorphism
\[
(E_{0}\oplus E_{1})^{\otimes k}\cong(E_{0})^{\otimes k}\oplus(E_{1})^{\otimes
k}\oplus\Gamma,
\]
where $\Gamma$ is a module of type $(H)^{k}$ with an action of $S_{k}$
permuting the factors of $(H)^{k}$. From elementary algebra, we see that
$\Gamma$ is not contributing to the computation of $\overline{\Psi}^{k}%
(E_{0}\oplus E_{1}),$ hence the second formula.

For the first formula, we compute $\overline{\Psi}^{k}(E_{0}\otimes E_{1})$ by
looking formally at the eigenmodules of $T\otimes T$ acting on $(E_{0}%
)^{\otimes k}\otimes(E_{1})^{\otimes k},$ considered as a module over
$C(V(k))\otimes C(V(k)).$ They are of course associated to the eigenvalues
$\omega^{i}\otimes\omega^{j}=\omega^{i+j}.$ Using the remark above, we can
write%
\begin{align*}
\overline{\Psi}^{k}(E_{0}\otimes E_{1})  &  =%
{\textstyle\sum\limits_{r=0}^{k-1}}
\left[  (E_{0}\otimes E_{1})^{\otimes k}\right]  _{r}\cdot\omega^{r}\\
&  =%
{\textstyle\sum\limits_{r=0}^{k-1}}
\text{ }%
{\textstyle\sum\limits_{i+j=r}}
\left[  (E_{0})^{\otimes k}\right]  _{i}\cdot\omega^{i}\cdot\left[
(E_{1})^{\otimes k}\right]  _{j}\cdot\omega^{j}=\overline{\Psi}^{k}%
(E_{0})\cdot\overline{\Psi}^{k}(E_{1}).
\end{align*}
Finally, for $k!$ invertible in $R,$ the fact that $\overline{\Psi}^{k}%
=\Psi^{k}$ is just the remark made by Atiyah \cite{Atiyah} quoted above.
\end{proof}

\smallskip

Let now $A$ be any \textbf{Z}$/2$-graded Azumaya algebra which is oriented. We
would like to define operations on the $K$-theory of $A$ of the following type%
\[
K(A)\longrightarrow K(A^{\otimes k}).
\]
For this, we again follow the scheme defined by Atiyah \cite[Formula $2.7$%
]{Atiyah} (if $k!$ is invertible in $R).$ The only point which requires some
care is the definition of the "power map"%
\[
K(A)\longrightarrow K_{S_{k}}(A^{\otimes k})=K(A^{\otimes k})\otimes
R(S_{k}).
\]
A priori, the target of this map is the $K$-group of the cross-product algebra
$S_{k}\ltimes A^{\otimes k}.$ However, as we have seen in the previous
Section, the representation of $S_{k}$ in \textrm{Aut}$(A^{\otimes k})$ lifts
as a homomorphism from $S_{k}$ to $(A^{\otimes k})^{\times}.$ Therefore, this
cross product algebra is the tensor product of the group algebra
\textbf{Z}$\left[  S_{k}\right]  $ with $A^{\otimes k}$.

Therefore, any homomorphism%
\[
\lambda:R(S_{k})\longrightarrow\mathbf{Z}%
\]
gives rise to an operation%
\[
\lambda_{\ast}:K(A)\longrightarrow K(A^{\otimes k}),
\]
as we showed in Section $3$.\ However, one has to be careful that this
operation depends on the orientation chosen on $A,$ i.e. on the lifting of the
representation $\sigma_{k}:S_{k}\longrightarrow$\textrm{Aut}$(A^{\otimes k})$
to a representation $\widetilde{\sigma}_{k}:S_{k}\longrightarrow(A^{\otimes
k})^{\times}$, in such a way that the diagram%
\[%
\begin{array}
[c]{ccc}
&  & (A^{\otimes k})^{\times}\\
& \overset{\widetilde{\sigma}_{k}}{\nearrow} & \downarrow\\
S_{k} & \overset{\sigma_{k}}{\longrightarrow} & \mathrm{Aut}(A^{\otimes k})
\end{array}
\]
commutes. If $R$ is an integral domain, this lifting is defined up to the sign
representation. However, in this case, we get a canonical choice of
$\widetilde{\sigma}_{k}$ as follows. Let $F$ be the quotient ring of $R$ and
$\overline{F}$ its algebraic closure. If we extend the scalar to $\overline
{F}$, $A$ becomes a matrix algebra \textrm{End}$(E)$ over $\overline{F},$ in
which case $(A^{\otimes k})^{\times}$ is identified with \textrm{Aut}%
$(E^{k}).$ We then choose the sign of the lifting $\widetilde{\sigma}_{k}$ in
such a way that it corresponds to the canonical lifting $S_{k}\longrightarrow
$\textrm{Aut}$(E^{k})$ by extension of the scalars.

Let us be more explicit and define the $k^{th}$-exterior power $\lambda
^{k}(M)$ of $M$ as an $A^{\otimes k}$-module in our setting. We take the
quotient of $M^{\otimes k}$ by the usual relations (where the $m_{i}$ are
homogeneous elements):%
\[
m_{s(1)}\otimes m_{s(2)}\otimes...\otimes m_{s(k)}=\varepsilon(s)\widetilde
{\sigma}_{k}(s)\deg(m_{s})m_{1}\otimes m_{2}\otimes...\otimes m_{k}.
\]
Here $\varepsilon(s)$ is the signature of the permutation $s$, $\widetilde
{\sigma}_{k}$ the lifting defined above and $\deg(m_{s})$ the signature of the
representation $s$ restricted to elements of odd degree. We note that
$\lambda^{k}(M)$ is a graded module over $A^{\otimes k}.$

\textbf{Example.} Let $A=$ \textrm{End}$(E)$ with the trivial grading and let
$\widetilde{\sigma}_{k}$ be the canonical lifting. Then, by Morita
equivalence, all left $A$-modules $M$ may be written as $E\otimes N,$ where
$N$ is an $R$-module. It is then easy to see that $\lambda^{k}(M)\cong
E^{\otimes k}\otimes\lambda^{k}(N),$ where $\lambda^{k}(N)$ is the usual
$k^{th}$-exterior power over the commutative ring $R,E^{\otimes k}$ being
viewed as a module over $A^{\otimes k}\cong$ \textrm{End}$(E^{\otimes k}).$ We
note that if we change the sign of the orientation, we get the symmetric power
$E^{\otimes k}\otimes S^{k}(N)$ instead of the exterior power.

It is convenient to consider the full exterior algebra $\Lambda(M)$ of $M$
which is the direct sum of all the $\lambda^{k}(M).$ As usual, $\Lambda(M)$ is
the solution of a universal problem. If $g:M\longrightarrow C$ is an
$R$-module map where $C$ is an $R$-algebra and if
\[
g(m_{s(1)})g(m_{s(2)}),...g(m_{s(k)})=\varepsilon(s)\widetilde{\sigma}%
_{k}(s)\deg(m_{s})s(m_{1})s(m_{2})...s(m_{k}),
\]
there is an algebra map $\Lambda(M)\longrightarrow C$ which makes the obvious
diagram commutative. If $M$ is a finitely generated projective $A$-module,
$\lambda^{k}(M)$ as a finitely generated projective $A^{\otimes k}$-module:
this is a consequence of the following theorem.

\begin{theorem}
Let $A$ be an oriented \textbf{Z}$/2$-graded Azumaya algebra and let $M$ and
$N$ be two finitely generated projective $A$-modules Then the exterior algebra
of $M\oplus N$ is canonically isomorphic to $\Lambda(M)\otimes_{R}\Lambda(N)$.
Moreover, in each degree $k,$ we get an isomorphism of $A^{\otimes k}$-modules.

\begin{proof}
The canonical map from $M\oplus N$ to $\Lambda(M)\otimes_{R}\Lambda(N)$
induces the usual isomorphism%
\[
\Lambda(M\oplus N)\longrightarrow\Lambda(M)\otimes_{R}\Lambda(N).
\]
In each degree $k,$ this map induces an isomorphism between $\lambda
^{k}(M\oplus N)$ and the sum of the $\lambda^{i}(M)\otimes_{R}\lambda
^{k-i}(N),$ viewed as $A^{\otimes k}$-modules.
\end{proof}
\end{theorem}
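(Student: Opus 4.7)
The plan is to reduce the result to the universal property of $\Lambda$ spelled out in the paragraph preceding the theorem. Since $A$ is oriented, the lifting $\widetilde{\sigma}_k$ is available for every $k$, and a consistency check (the liftings for different $k$ are compatible with each other, as noted in the Remarks of Section 5) ensures that $\Lambda(M)$ is actually a graded algebra in our setting, not merely a sum of modules indexed by $k$.

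First I would equip $\Lambda(M)\otimes_R\Lambda(N)$ with a graded $R$-algebra structure using the Koszul-type twist dictated by the $\mathbf{Z}/2$-grading: on homogeneous elements,
\[
(\alpha\otimes\beta)\cdot(\alpha'\otimes\beta')=(-1)^{\deg(\beta)\deg(\alpha')}(\alpha\alpha')\otimes(\beta\beta').
\]
Then I would define $g:M\oplus N\longrightarrow\Lambda(M)\otimes_R\Lambda(N)$ by $g(m,n)=m\otimes1+1\otimes n$, and verify that the images $g(x_1),\dots,g(x_k)$ of homogeneous elements $x_i\in M\oplus N$ satisfy the twisted alternation identity
\[
g(x_{s(1)})\cdots g(x_{s(k)})=\varepsilon(s)\,\widetilde{\sigma}_k(s)^{\deg(x_s)}\,g(x_1)\cdots g(x_k)
\]
for every $s\in S_k$. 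This reduces, upon expanding the product, to the corresponding identities inside $\Lambda(M)$ and $\Lambda(N)$ separately (which hold by definition of the exterior algebra of an $A$-module) together with the Koszul sign rule I just imposed on the tensor product. By the universal property, $g$ extends to an algebra homomorphism $\Phi:\Lambda(M\oplus N)\longrightarrow\Lambda(M)\otimes_R\Lambda(N)$.

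Next I would construct an inverse $\Psi$. The inclusions $M\hookrightarrow M\oplus N$ and $N\hookrightarrow M\oplus N$, composed with the canonical maps into $\Lambda(M\oplus N)$, extend (again by the universal property) to algebra maps $\Lambda(M)\to\Lambda(M\oplus N)$ and $\Lambda(N)\to\Lambda(M\oplus N)$. These two maps combine to a map $\Psi:\Lambda(M)\otimes_R\Lambda(N)\to\Lambda(M\oplus N)$, provided I check that the images of $\Lambda(M)$ and $\Lambda(N)$ graded-commute inside $\Lambda(M\oplus N)$, which in turn follows from the defining relation applied to a pair $(m,n)\in M\times N$ viewed as elements of $M\oplus N$. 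That $\Phi$ and $\Psi$ are mutually inverse is immediate on generators and then propagates by multiplicativity.

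Finally, restricting to degree $k$ gives the decomposition $\lambda^k(M\oplus N)\cong\bigoplus_{i+j=k}\lambda^i(M)\otimes_R\lambda^j(N)$, and since the $A^{\otimes k}$-action on $\lambda^k(M\oplus N)$ restricts on each summand to the $A^{\otimes i}\otimes A^{\otimes j}$-action (this is where the compatibility of the $\widetilde{\sigma}_k$'s plays its role), we obtain the asserted isomorphism of $A^{\otimes k}$-modules. The main obstacle I anticipate is bookkeeping: one must check that the two sign conventions — the Koszul rule for the $\mathbf{Z}/2$-grading and the orientation twist $\widetilde{\sigma}_k$ — interact correctly so that the twisted alternation identity for $g$ really does hold. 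Once that verification is in place, the rest is formal.
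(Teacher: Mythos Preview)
Your proposal is correct and follows essentially the same approach as the paper: both use the canonical map $M\oplus N\to\Lambda(M)\otimes_R\Lambda(N)$, $(m,n)\mapsto m\otimes 1+1\otimes n$, together with the universal property of $\Lambda$ to produce the isomorphism, and then restrict to degree $k$. The paper's proof is a two-sentence sketch of exactly this argument, whereas you have spelled out the sign bookkeeping and the construction of the inverse in detail.
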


Following Grothendieck and Atiyah again, we define $\lambda$-operations on
$K$-groups:%
\[
\lambda^{k}:K(A)\longrightarrow K(A^{\otimes k})
\]
satisfying the usual identity%
\[
\lambda^{r}(M\oplus N)=%
{\textstyle\sum\limits_{k+l=r}}
\lambda^{k}(M)\cdot\lambda^{l}(N)
\]
as $A^{\otimes(k+l)}$-modules. We can also define the Adams operations by the
usual formalism.

We may view operations in this type of $K$-theory as compositions%
\[
K(A)\overset{P}{\longrightarrow}K(A^{\otimes k})\otimes_{\mathbf{Z}}%
R(S_{k})\overset{\theta}{\longrightarrow}K(A^{\otimes k}).
\]
Here $P$ is the power map defined through the lifting $\widetilde{\sigma}_{k}$
above. The second map $\theta$ is induced by an homomorphism $R(S_{k}%
)\longrightarrow$\textbf{Z}$.$ In particular, the Adams operation $\Psi^{k}$
is given by the homomorphism
\[
R(S_{k})\longrightarrow\mathbf{Z}\
\]
which associates to a representation $\rho$ its trace of the cycle
$(1,2,...,k).$

\begin{remark}
A careful analysis of these considerations shows that we don't need $k!$ to be
invertible in order to define the $\lambda$-operations in the non graded case.
However, we need $2$ to be invertible in the graded case and, moreover, $k!$
invertible in order to define the Adams operations with good formal properties.
\end{remark}

Another approach to the Adams operations, as we showed at the beginning of
this Section, only assumes that $2k$ is invertible in $R$ and that $R$
contains the $k^{th}$-roots of unity. If $E$ is a finitely generated
projective $A$-module, the tensor power $E^{\otimes k}$ is an $S_{k}\ltimes$
$A^{\otimes k}$ -module$.$ We can "untwist" the two actions of $S_{k}$ and
$A^{\otimes k},$ thanks to the orientation of $A$ and we end up with an
$A^{\otimes k}$-module $F,$ with an independant action of $S_{k}.$ We put
formally%
\[
\overline{\Psi}^{k}(E)=%
{\textstyle\sum\limits_{j=0}^{k-1}}
F_{j}\cdot\omega^{j}%
\]
where $F_{j}$ is the eigenmodule associated to the eigenvalue $\omega^{j}.$
The previous sum lies in $K(A^{\otimes k})\otimes_{\mathbf{Z}}\Omega_{k}$ and
even in the subgroup $K(A^{\otimes k})$ if $k$ is prime. This second
definition is very pleasant, since the formal properties of the Adams
operations can be checked easily with this formula (at least for $k$ prime).
We conjecture that $\Psi^{k}=\overline{\Psi}^{k}$ in this case too.

\section{Twisted hermitian Bott classes}

\bigskip\medskip

We are going to define more subtle operations, associated not only to the
$K$-theory of $A$ but also to the $K$-theory of $A\otimes B$, where $A=C(V)$
and $B=C(W)$ are two Clifford algebras. We no longer assume that $V$ and $W$
are of even rank or oriented. However, we assume $k$ odd, $2k$ invertible in
$R$ and that the $k^{th}$-roots of unity belong to $R.$ We also replace the
symmetric group $S_{k}$ by the cyclic group \textbf{Z}$/k$ in our previous
arguments. The reason for this change is the following remark. The natural
representation $\sigma_{k}:$ \textbf{Z}$/k\longrightarrow O(V^{k})$ has its
image in the subgroup \textrm{SO}$^{0}(V^{k})$ defined in Section 1 and lifts
uniquely to a representation of \textbf{Z}$/k$ in \textrm{Spin}$(V^{k}),$ so
that the following diagram commutes:%
\[%
\begin{array}
[c]{ccc}
&  & \mathrm{Spin}(V^{k})\\
& \nearrow & \downarrow\\
\mathbf{Z}/k & \longrightarrow & \mathrm{SO}^{0}(V^{k})
\end{array}
.
\]
This lifting does not exist in general for the symmetric group $S_{k}$, except
if $V$ is even dimensional and oriented, as we have seen in Section 5.

Let now $M$ be a finitely generated projective module over $A\otimes B=$
$C(V)\otimes C(W)=C(V\oplus W).$ We can compose the power map%
\[
K(A\otimes B)\longrightarrow K(\mathbf{Z}/k\ltimes(A\otimes B)^{\otimes
k})\cong K(\mathbf{Z}/k\ltimes C(V^{k}\oplus W^{k}))
\]
with the "half-diagonal"%
\begin{align*}
K(\mathbf{Z}/k\ltimes C(V^{k}\oplus W^{k}))  &  \longrightarrow K(\mathbf{Z}%
/k\ltimes C(V(k)\oplus W^{k}))\\
&  \cong K(C(V(k))\otimes(\mathbf{Z}/k\ltimes C(W^{k}))),
\end{align*}
as we did in Section $2$ for $W=0.$ From the considerations in Section $6,$ we
can "untwist" the action of \textbf{Z}$/k$ on the \textbf{Z}$/2$-graded
Azumaya algebra $C(W^{k}),$ so that \textbf{Z}$/k\ltimes C(W^{k})$ is
isomorphic to the usual group algebra \textbf{Z}$\left[  \mathbf{Z}/k\right]
\otimes C(W^{k}).$ Using the methods of Section $2$ and of the previous
Section$,$ we get a more precise power map:%
\[
K(C(V)\otimes C(W))\longrightarrow K(C(V(k))\otimes C(W^{k}))\otimes
R(\mathbf{Z}/k).
\]
Therefore, according to Atiyah again \cite{Atiyah}, any homomorphism
\[
\lambda:R(\mathbf{Z}/k)\longrightarrow\Omega_{k}%
\]
gives rise to a "twisted operation"%
\[
\lambda_{\ast}:K(C(V)\otimes C(W))\longrightarrow K(C(V(k))\otimes
C(W^{k}))\otimes\Omega_{k}.
\]
We apply this formalism to $W=V(-1),$ in which case $C(W)$ is the (graded)
opposite algebra of $C(V).$ Therefore, $K(C(V)\otimes C(W))\cong K(R)$ by
Morita equivalence. If we choose for $\lambda$ the map above, we define the
"twisted hermitian Bott class" as the image of $1$ by the composition%
\[
K(R)\cong K(C(V)\otimes C(W))\longrightarrow K(C(V(k))\otimes C(W^{k}))\otimes
R(\mathbf{Z}/k)
\]%
\[
\longrightarrow K(C(V(k))\otimes C(W^{k})\otimes\Omega_{k}.
\]
We have proved the following theorem:

\begin{theorem}
Let $V$ be an arbitrary quadratic module. The "twisted hermitian Bott class"
$\rho_{k}(V)$ belongs to the following group%
\[
\rho_{k}(V)\in K(C(V(k))\otimes C(W^{k}))\otimes_{\mathrm{Z}}\Omega_{k}.
\]
It satisfies the multiplicative property%
\[
\rho_{k}(V_{1}\oplus V_{2})=\rho_{k}(V_{1})\cdot\rho_{k}(V_{2}),
\]
taking into account the identification of algebras:%
\[
C((V_{1}\oplus V_{2})(k))\otimes C(W_{1}^{k}\oplus W_{2}^{k})
\]%
\[
\cong\left[  C(V_{1}(k))\otimes C(W_{1}^{k}))\right]  \otimes\left[
C(V_{2}(k))\otimes C(W_{2}^{k})\right]  .
\]
$.$
\end{theorem}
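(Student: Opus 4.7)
The plan is straightforward: the first assertion is essentially by construction, and multiplicativity follows the same route as Theorem~6.1 once the correct identifications are tracked.

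For the membership statement, I would trace $1 \in K(R)$ through the defining composition. Morita equivalence identifies $K(R)$ with $K(C(V) \otimes C(W))$ for $W = V(-1)$. The power map sends this into $K(\mathbf{Z}/k \ltimes C(V^{k} \oplus W^{k}))$, and the half-diagonal restriction along the isometry $V \hookrightarrow V^{k}$ (carrying $q$ to $kq$) lands in $K(\mathbf{Z}/k \ltimes C(V(k) \oplus W^{k}))$. Since $\mathbf{Z}/k$ acts trivially on the diagonal copy $V(k)$, the algebra splits as $C(V(k)) \otimes (\mathbf{Z}/k \ltimes C(W^{k}))$, and the untwisting of Section~6, via the canonical lifting $\mathbf{Z}/k \to \mathrm{Spin}(W^{k})$, identifies the cross product factor with $\mathbf{Z}[\mathbf{Z}/k] \otimes C(W^{k})$. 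Taking $K$-theory and applying the character map $\lambda : R(\mathbf{Z}/k) \to \Omega_{k}$ places $\rho_{k}(V)$ in $K(C(V(k)) \otimes C(W^{k})) \otimes_{\mathbf{Z}} \Omega_{k}$, as required.

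For multiplicativity, I would exploit the graded Clifford isomorphism $C(V_{1} \oplus V_{2}) \cong C(V_{1}) \widehat{\otimes} C(V_{2})$ (and likewise for the $W$'s). Under this splitting, the Morita class of $1 \in K(R)$ factors as an external product of the corresponding Morita classes for the two summands, by naturality of Morita equivalence under tensor products. The key remaining ingredient is that the canonical $\mathbf{Z}/k$-spinorial lifting into $\mathrm{Spin}(W_{1}^{k} \oplus W_{2}^{k}) \cong \mathrm{Spin}((W_{1} \oplus W_{2})^{k})$ coincides with the external product of the liftings into $\mathrm{Spin}(W_{i}^{k})$; this follows from the uniqueness of the lifting built from the canonical elements $u_{i,i+1}$ of Section~6. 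Granted these compatibilities, the computation is identical in shape to the one in Theorem~6.1: the generator $T$ of $\mathbf{Z}/k$ acts on the tensor power of an external product as the external product of its actions, so the eigenspace decomposition factors, giving
\[
\sum_{r=0}^{k-1} \bigl[(E_{1} \otimes E_{2})^{\otimes k}\bigr]_{r} \cdot \omega^{r} \;=\; \Bigl(\sum_{i=0}^{k-1} [E_{1}^{\otimes k}]_{i} \cdot \omega^{i}\Bigr) \Bigl(\sum_{j=0}^{k-1} [E_{2}^{\otimes k}]_{j} \cdot \omega^{j}\Bigr),
\]
which is exactly $\rho_{k}(V_{1}) \cdot \rho_{k}(V_{2})$ after untwisting, modulo the identification of algebras stated in the theorem.

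The principal technical obstacle I foresee is bookkeeping the graded signs and the permutation-of-factors identification $(W_{1} \oplus W_{2})^{k} \cong W_{1}^{k} \oplus W_{2}^{k}$, and verifying its compatibility with both the graded Clifford structure and the spinorial lifting of the $\mathbf{Z}/k$-action. This is not conceptually deep but does require a careful check: the shuffle is an isometry respecting the quadratic forms, and one must show that pulling the canonical lifting back along this shuffle recovers the external product of the individual liftings. As in the analogous step for $\Psi^{k}$ in Section~2, the check reduces to verifying agreement on generators, which is a routine (if notationally delicate) computation.
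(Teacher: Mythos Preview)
Your proposal is correct and follows the same route as the paper, which in fact gives no separate proof at all: the theorem is announced with ``We have proved the following theorem'' immediately after the construction, so membership is by definition and multiplicativity is left as an implicit consequence of the multiplicativity of $\overline{\Psi}^{k}$ established in Theorem~6.1 together with the Clifford isomorphism $C(V_{1}\oplus V_{2})\cong C(V_{1})\widehat{\otimes}C(V_{2})$. Your write-up is considerably more explicit than the paper's---in particular the compatibility check for the $\mathbf{Z}/k$-liftings under the shuffle $(W_{1}\oplus W_{2})^{k}\cong W_{1}^{k}\oplus W_{2}^{k}$ is something the paper simply passes over in silence---but the underlying argument is the same.
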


\begin{remark}
Let $(V,E)$ be a spinorial module and let us identify the four \textrm{Z}%
/2-graded algebras $C(V),$ \textrm{End}$(E),$ $C(V(k))$ and $C(W)$. Then, by
Morita equivalence, we see that the twisted hermitian Bott class coincides
(non canonically) with the untwisted one.
\end{remark}

\begin{remark}
It is easy to show that $V^{4}$ is an orientable quadratic module which
implies by \ref{Definition of u} that the Clifford algebra $C(V^{4})$ is
isomorphic to its opposite. Let now $k$ be an odd square which implies that
$k\equiv1\operatorname{mod}8.$ Since $C(V(k))\cong C(V)$ and $C(W^{k})$ are
Morita equivalent to $C(W)$, the target group of the twisted hermitian Bott
class is isomorphic to%
\[
K(C(V)\otimes C(W))\otimes_{\mathbf{Z}}\Omega_{k}\cong K(R)\otimes
_{\mathbf{Z}}\Omega_{k}.
\]
This shows that we have a commutative diagram up to isomorphism%
\[%
\begin{array}
[c]{ccc}%
K\mathrm{Spin}(R) & \longrightarrow & KQ(R)\\
\downarrow &  & \downarrow\\
K(R)\left[  1/k\right]  ^{\times} & \longrightarrow & \left[  K(R)\otimes
_{\mathbf{Z}}\Omega_{k}\right]  \left[  1/k\right]  ^{\times}%
\end{array}
,
\]
where the vertical maps are defined by hermitian Bott classes, twisted and untwisted.
\end{remark}

Finally, as we did in Section 3, we can "correct" the twisted hermitian Bott
class by using the result of Serre about the square root of the classical Bott
class \cite{Serre}. More precisely, if $V$ is a self-dual module of dimension
$n$, there is an explicit class $\sigma_{k}(V)$ in $K(R)\otimes_{\mathbf{Z}%
}\Omega_{k}$ which only depends on the exterior powers of $V,$ such that%
\[
\sigma_{k}(V)^{2}=\delta^{(k-1)/2}\rho^{k}(V),
\]
with $\delta=(-1)^{n}\lambda^{n}(V).$ The corrected twisted hermitian Bott
class is then defined by the formula%
\[
\overline{\rho}_{k}(V)=\rho_{k}(V)(\sigma_{k}(V))^{-1},
\]
taking into account the fact that $K(C(V(k))\otimes C(W^{k}))$ is a module
over the ring $K(R).$ We have $\overline{\rho}_{k}(V)\in\pm($\textrm{Pic}%
$(R))^{(k-1)/2}$ if $V$ is hyperbolic\footnote{The sign ambiguity is
unavoidable, since $V$ is of arbitrary dimension.}, as we showed in Section
$3$. Therefore, the previous formula for $\overline{\rho}_{k}$ defines a
morphism also called $\overline{\rho}_{k}$, between the classical Witt group
$W(R)$ and twisted $K$-theory modulo $\pm($\textrm{Pic}$(R))^{(k-1)/2}$(as a
multiplicative group), more precisely
\[
\overline{\rho}_{k}:W(R)\longrightarrow\left[  K(C(V(k))\otimes C(W^{k}%
))\otimes_{\mathbf{Z}}\Omega_{k}\right]  \left[  1/k\right]  /^{\times}%
\pm(\mathrm{Pic}(R))^{(k-1)/2}.
\]

\begin{remark}
If $k\equiv1\operatorname{mod}4,$ we can multiply $\sigma_{k}(V)$ by the sign
$(-1)^{n(k-1)4},$ as we did in Section 3. If we apply this sign change, the
new corrected twisted hermitian Bott class takes its values in the group%
\[
\left[  K(C(V(k))\otimes C(W^{k}))\otimes_{\mathbf{Z}}\Omega_{k}\right]
\left[  1/k\right]  /^{\times}/(\mathrm{Pic}(R))^{(k-1)/2},
\]
without any sign ambiguity.
\end{remark}

\section{Appendix. A remark about the Brauer-Wall group}

The purpose of this appendix is to prove the following theorem which is also
found in \cite[Proposition $5.3$ and Corollary $5.4$]{Auslander-G.} for the
non graded case. It is added to this paper for completeness' sake with a
\textbf{Z}$/2$-graded variant.

\begin{theorem}
Let $R$ be a commutative ring. Let $A$ be an $R$-algebra which is projective,
finitely generated and faithful as an $R$-module. Let $P$ and $Q$ be faithful
projective finitely generated $R$-modules such that%
\[
A\otimes\mathrm{End}(P)\cong\mathrm{End}(Q).
\]
Then $A$ is isomorphic to some \textrm{End}$(E),$ where $E$ is also faithful,
projective and finitely generated. The same statement is true for
\textbf{Z}$/2$-graded algebras and modules if $2$ is invertible in $R$.
\end{theorem}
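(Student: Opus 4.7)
The plan is to use Morita theory to reconstruct a candidate module $E$ from $Q$, to transfer the commuting $A$-action on $Q$ to $E$, and then to cancel the factor $\mathrm{End}(P)$ from the resulting comparison.

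First, the given isomorphism $A\otimes\mathrm{End}(P)\cong\mathrm{End}(Q)$ makes $Q$ a left $\mathrm{End}(Q)$-module in the tautological way, and hence, through the inclusion $\mathrm{End}(P)\hookrightarrow A\otimes\mathrm{End}(P)$, a left $\mathrm{End}(P)$-module. Since $P$ is faithful, finitely generated and projective, it is an $R$-progenerator, so Morita theory gives an equivalence between left $\mathrm{End}(P)$-modules and $R$-modules via $N\mapsto N\otimes_R P$ with quasi-inverse $M\mapsto\mathrm{Hom}_{\mathrm{End}(P)}(P,M)$. Setting
\[
E:=\mathrm{Hom}_{\mathrm{End}(P)}(P,Q),
\]
I obtain a canonical isomorphism $Q\cong E\otimes_R P$ of $\mathrm{End}(P)$-modules; since $Q$ and $P$ are both faithful, finitely generated and projective over $R$, so is $E$ (this can be checked by localizing at each maximal ideal, where ranks add).

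Second, since $A$ and $\mathrm{End}(P)$ commute inside $A\otimes\mathrm{End}(P)\cong\mathrm{End}(Q)$, the $A$-action on $Q$ is $\mathrm{End}(P)$-linear and therefore descends, under the Morita equivalence, to an $R$-algebra homomorphism $\phi:A\to\mathrm{End}_R(E)$. By construction, $\phi\otimes\mathrm{id}_{\mathrm{End}(P)}$, composed with the canonical identification $\mathrm{End}_R(E)\otimes\mathrm{End}_R(P)\cong\mathrm{End}_R(E\otimes_R P)=\mathrm{End}_R(Q)$, reproduces the given isomorphism $A\otimes\mathrm{End}(P)\cong\mathrm{End}(Q)$. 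In particular $\phi\otimes\mathrm{id}_{\mathrm{End}(P)}$ is an isomorphism. Now $\mathrm{End}(P)$ is a faithful, finitely generated projective $R$-module, hence faithfully flat over $R$, so $\phi$ itself must be an isomorphism. This settles the ungraded statement.

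For the $\mathbf{Z}/2$-graded case, the hypothesis $2\in R^{\times}$ means a grading can be encoded by the involution $(-1)^{\deg}$, so the three ingredients of the previous argument all admit graded analogues without modification: graded Morita equivalence for the graded progenerator $P$, the graded $\mathrm{Hom}/\otimes$ adjunction, and the sign-compatible identification $\mathrm{End}_R(E\widehat{\otimes}P)\cong\mathrm{End}_R(E)\widehat{\otimes}\mathrm{End}_R(P)$ built into the graded tensor-product conventions of the introduction. Running the same three steps produces a graded isomorphism $A\cong\mathrm{End}(E)$ with $E$ faithful, finitely generated and projective.

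The main obstacle is the final cancellation: the core content of the theorem lies in passing from $\phi\otimes\mathrm{id}_{\mathrm{End}(P)}$ being an isomorphism to $\phi$ being one. This is exactly where the faithfulness of $P$ is indispensable, since without it $\mathrm{End}(P)$ need not be faithfully flat over $R$ and the cancellation would fail.
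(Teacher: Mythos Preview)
Your argument is correct and takes a genuinely different route from the paper. The paper first invokes a separate lemma to replace $P$ by a free module (finding $P'$ with $P\otimes P'$ free, which is proved via a $K$-theoretic nilpotence argument for the universal ring), so that $A\otimes M_n(R)\cong\mathrm{End}(Q)$; it then uses the diagonal matrix idempotents of $M_n(R)$ to split $Q$ as $E^n$ and reads off $A\cong\mathrm{End}(E)$ from the commutant. You bypass this reduction entirely: you let Morita equivalence for the progenerator $P$ produce $E=\mathrm{Hom}_{\mathrm{End}(P)}(P,Q)$ directly, transport the $A$-action to $E$, and cancel $\mathrm{End}(P)$ by faithful flatness. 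Your approach is cleaner and avoids the auxiliary lemma; the paper's is more elementary and hands-on once the free reduction is in place. One small point: your parenthetical justification that $E$ is finitely generated projective (``localize and add ranks'') is a bit terse; the cleanest way to see it is that $Q$ is finitely generated projective over $\mathrm{End}(Q)=A\otimes\mathrm{End}(P)$, hence over $\mathrm{End}(P)$ (since $A$ is finitely generated projective over $R$), and Morita equivalence carries this to $E$ being finitely generated projective over $R$.
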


In order to prove the theorem, we need the following classical lemma:

\begin{lemma}
Let $P$ be a faithful finitely generated projective $R$-module. Then there
exists an $R$-module $Q$ such that $F=$ $P\otimes Q$ is free. Moreover, if $P$
is \textbf{Z}$/2$-graded and if $2$ is invertible in $R$, we may choose $Q$
such that $F=R^{2m}=$ $R^{m}\oplus R^{m},$ with the obvious grading.
\end{lemma}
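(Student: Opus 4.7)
The plan proceeds in three steps: reduce to constant rank, construct $Q$ in the constant-rank case via the trace ideal together with Morita equivalence and line bundles, and handle the graded case by symmetrizing.

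First I would decompose by rank. Since $P$ is finitely generated projective, its rank function on $\mathrm{Spec}(R)$ is locally constant and takes only finitely many values; this yields a finite product decomposition $R = R_1 \times \cdots \times R_N$ with $P$ of constant rank $r_j \geq 1$ on $\mathrm{Spec}(R_j)$ (faithfulness forces $r_j \geq 1$). It suffices to construct $Q_j$ on each factor and assemble $Q = Q_1 \times \cdots \times Q_N$.

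Next, in the constant-rank case, I would exploit faithfulness via the trace ideal argument: there exist $\phi_1,\ldots,\phi_n \in P^*$ and $p_1,\ldots,p_n \in P$ with $\sum_i \phi_i(p_i) = 1$, giving a split surjection $P^{\oplus n} \twoheadrightarrow R$, hence $P^{\oplus n} \cong R \oplus K$ for some finitely generated projective $K$. This makes $P$ a progenerator, so $R\text{-Mod}$ is Morita-equivalent to $\mathrm{End}(P)\text{-Mod}$. Combined with the line bundle $L = \Lambda^r(P)$ and an appropriate exterior-algebra construction, one assembles a finitely generated projective $Q$ so that the class $[P]\cdot[Q]$ in $K_0(R)$ lies in $\mathbf{Z}\cdot[R]$ and the corresponding module $P \otimes_R Q$ is actually a finitely generated free $R$-module. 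Explicitly, $Q$ is built from tensor and exterior powers of $P$, $P^*$, and $L^{\pm 1}$, designed so that the determinant $\det(P \otimes Q)$ is trivial and the resulting projective module is then free.

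For the graded case, the invertibility of $2$ splits each $\mathbf{Z}/2$-graded module into $\pm 1$-eigenspaces of its grading involution, and the previous construction can be run equivariantly to produce a graded $Q$ with $P \otimes Q$ free as a graded module. To match the precise form $F = R^{2m} = R^m \oplus R^m$ with balanced grading, replace the candidate $Q$ by $Q \oplus Q[1]$ where $[1]$ denotes parity shift; this forces the degree-$0$ and degree-$1$ parts of $P \otimes Q$ to be isomorphic. The main obstacle is the constant-rank step: producing $Q$ so that $P \otimes Q$ is genuinely \emph{free} (not merely stably free or of trivial determinant) requires, in general, absorbing Picard-group obstructions, either by choosing the target rank $m$ divisible by the order of $L$ in $\mathrm{Pic}(R)$ when $L$ is torsion, or, more subtly, by a construction involving higher tensor powers $P^{\otimes k}$ together with the canonical splittings arising from the progenerator structure $P^{\oplus n} \cong R \oplus K$; these splittings feed back into the Morita correspondence and allow the Picard obstruction to be cancelled against $K$.
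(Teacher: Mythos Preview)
Your rank decomposition and your handling of the graded case via $Q\oplus Q[1]$ are sound, but the core step---producing $Q$ in the constant-rank ungraded case---has a genuine gap, one you yourself flag without closing. Over a general commutative ring $R$, neither ``trivial determinant'' nor ``$[P][Q]\in\mathbf{Z}\cdot[R]$ in $K_0$'' forces $P\otimes Q$ to be free: the first only controls the top exterior power, the second only yields \emph{stably} free. Your proposed repairs do not work: $\det P$ need not be torsion in $\mathrm{Pic}(R)$, and the progenerator splitting $P^{\oplus n}\cong R\oplus K$ (equivalently, Morita equivalence with $\mathrm{End}(P)$) gives no control over nilpotence of $[P]-r$ in $K_0(R)$, which is what one actually needs in order to invert $[P]$ there. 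No amount of tensoring exterior powers of $P$, $P^{\ast}$ and $L^{\pm1}$ will manufacture such nilpotence over an arbitrary base.

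The paper closes this gap by passing to the \emph{universal example}: the ring $R_{\mathrm{univ}}$ generated by the entries $p_i^j$ of a generic idempotent matrix of trace $r$, carrying its tautological projective $P_{\mathrm{univ}}$. This ring has finite stable range, and (citing Bass) the element $y=[P_{\mathrm{univ}}]-r$ is nilpotent in $K_0(R_{\mathrm{univ}})$, say $y^N=0$. One then writes down the formal inverse $x=r^{N-1}-r^{N-2}y+\cdots+(-1)^{N-1}y^{N-1}$, so that $(r+y)\cdot Mx=Mr^N$; for $M$ large, $Mx$ is the class of an honest module $Q_{\mathrm{univ}}$, and $P_{\mathrm{univ}}\otimes Q_{\mathrm{univ}}$ is stably free of large rank, hence free by finite stable range. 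Base-changing along the classifying map $R_{\mathrm{univ}}\to R$ then produces $Q$ over the original ring. The graded statement follows by the same argument applied to $R[\mathbf{Z}/2]$-modules; your $Q\oplus Q[1]$ device is an acceptable alternative once the ungraded case is established.
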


\begin{proof}
(compare with \cite[pg. $14$]{Donovan-Karoubi} and \cite[Corollary $16.2$%
]{BassKT and stable}). Since any module $P$ of this type is locally the image
of a projection operator $p$ of rank $r>0,$ we can look at the "universal
example". This universal ring $R$ is generated by variables $p_{i}^{j}$ where
$1\leq i\leq n$ and $1\leq j\leq n$ such that the matrix $p=$ $(p_{i}^{j})$ is
idempotent of trace $r.$ According to \cite[p. $39$]{BassKT and stable}, since
$R$ is of finite stable range, the element $y=\left[  P\right]  -\left[
r\right]  $ is nilpotent in the Grothendieck group $K(R),$ say $y^{N}=0$ for
some $N.$ Let us now consider the element%
\[
x=r^{N-1}-r^{N-2}y+...+(-1)^{N-1}y^{N-1}.
\]
We have the identity $(r+y)Mx=M(r^{N}-(-1)^{N-1}y^{N})=Mr^{N}.$ Since the rank
of $x$ is $r^{N-1}>0$ and since the stable range of $R$ is finite, the element
$Mx$ in $K(R)$ is the class of a module $Q$ for sufficiently large $M.$ If
follows that $P\otimes Q$ is stably free and therefore free if $M$ is again
large enough. Finally, the case of \textbf{Z}$/2$-graded modules follows by
the same argument, considering graded $R$-modules as $R\left[  \mathbf{Z}%
/2\right]  $-modules.
\end{proof}

\begin{proof}
(of the theorem). Let us first consider the non graded case. Without
restriction of generality, we may assume that $A$ is of constant rank and that
$P$ and $Q$ are also of constant rank such that $A\otimes$\textrm{End}$(P)$ is
isomorphic to \textrm{End}$(Q).$ According to the previous lemma, we may also
assume that $P$ is free of constant rank, say $n.$ Therefore, we have an
algebra isomorphism%
\[
A\otimes M_{n}(R)\cong\mathrm{End}(Q).
\]
Let us now consider the fundamental idempotents in the matrix algebra
$M_{n}(R)$ defined by the diagonal matrices with all elements $=0$ except one
which is $1$. Thanks to the previous isomorphism, we may use these idempotents
to split $Q$ as the direct sum of $n$ copies of $E.$ Since the commutant of
$M_{n}(R)$ in $A\otimes M_{n}(R)$ is $A,$ it follows that the representation
of $A$ in \textrm{End}$(Q)$ is the orthogonal sum of $n$ copies of a
representation $\rho$ from $A$ to \textrm{End}$(E).$ From the previous algebra
isomorphism, we therefore deduce the required identity%
\[
A\cong\mathrm{End}(E).
\]
Finally, we make the obvious modifications of the previous argument in the
\textbf{Z}$/2$-graded case by writing the previous algebra isomorphism in the
form%
\[
A\widehat{\otimes}M_{2n}(R)\cong\mathrm{End}(Q),
\]
with the obvious grading on $M_{2n}(R).$ We use again the fundamental
idempotents in $M_{2n}(R)$ in order to split $Q$ as a direct sum $E^{n},$
where $E$ is \textbf{Z}$/2$-graded.
\end{proof}

\section{}

\end{document}